\def\mcA{\mathcal{A}}
\def\Sphere{\mathbb{S}}
\def\bS{{\mathbb S}}
\newcommand \eps{\epsilon}
\newcommand \SN{{\mathbb{S}^{N-1}}}
\newcommand \N{\mathbb{N}}
\newcommand \R{\mathbb{R}}
\newcommand \RR{\mathbb{R}}
\newcommand \loc{\mathrm{loc}}
\newcounter{marnote}
\numberwithin{equation}{section}
\def\be{\begin{equation}}
\def\ee{\end{equation}}
\newtheorem{theorem}{Theorem} 
\newtheorem{corollary}[theorem]{Corollary} 
\newtheorem{lemma} [theorem]{Lemma} 
\newtheorem{remark} [theorem]{Remark} 
\newtheorem{proposition} [theorem]{Proposition} 
\theoremstyle{definition}
\title
{Minimality of vortex solutions to Ginzburg--Landau type systems for gradient fields in the unit ball in dimension $N\geq 4$
}
\begin{document}

\author{Radu Ignat\thanks{Institut de Math\'ematiques de Toulouse, UMR 5219, Universit\'e de Toulouse, CNRS, UPS
IMT, F-31062 Toulouse Cedex 9, France. Email: Radu.Ignat@math.univ-toulouse.fr}~, Mickael Nahon\thanks{Laboratoire Jean Kuntzmann, UMR 5224, Universit\'e Grenoble Alpes, 700 avenue centrale, 38401 Saint Martin d'H\`eres. Email: mickael.nahon@univ-grenoble-alpes.fr}~ and Luc Nguyen\thanks{Mathematical Institute and St Edmund Hall, University of Oxford, Andrew Wiles Building, Radcliffe Observatory Quarter, Woodstock Road, Oxford OX2 6GG, United Kingdom. Email: luc.nguyen@maths.ox.ac.uk}}

\date{}

\maketitle

\begin{abstract}
We prove that the degree-one vortex solution is the unique minimizer for the Ginzburg--Landau functional for gradient fields (that is, the Aviles--Giga model) in the unit ball $B^N$ in dimension $N \geq 4$ and with respect to its boundary value. A similar result is also proved for $\mathbb{S}^N$-valued maps in the theory of micromagnetics. Two methods are presented. The first method is an extension of the analogous technique previously used to treat the unconstrained Ginzburg--Landau functional in dimension $N \geq 7$. The second method uses a symmetrization procedure for gradient fields such that the $L^2$-norm is invariant while the $L^p$-norm, $2 < p < \infty$, and the $H^1$-norm are lowered.  

\noindent {\it Keywords: Minimality, vortex solutions, gradient fields, Ginzburg--Landau, Aviles--Giga, Hardy's inequality, symmetrization.}

\noindent {\it MSC 2020:} 26D10, 35J20, 35J50, 35Q56.
\end{abstract}

\tableofcontents

\section{Introduction}

Let $B^N$ be the unit ball in $\RR^N$. Consider the Ginzburg--Landau (GL) functional 
\[
E^{GL}_\eps [U] = \int_{B^N} \Big[\frac{1}{2} |\nabla U|^2 + \frac{1}{2\eps^2} W(1 - |U|^2)\Big]\,dx, 
\]
where $\eps > 0$, $W(t)=\frac{t^2}2$ and $U$ belongs to the set
\[
\mcA^{GL} = \{U \in H^1(B^N,\RR^N) : U(x) = x \text{ on } \partial B^N\}.
\]
The functional $E^{GL}_\eps$ has a unique radially symmetric critical point in $\mcA^{GL}$ of the form 
 \begin{equation}
U_\eps(x) = f_\eps(r) \frac{x}{r} \in \mcA^{GL},  \quad r = |x|,
	\label{eq_urad}
\end{equation}
where the profile $f_\eps$ is the unique solution to the ODE (see e.g. \cite{Hervex2, ODE_INSZ})
\begin{equation}\label{eq_f}
\begin{cases}
-f_\eps''(r)-\frac{N-1}{r}f_\eps'(r)+\frac{N-1}{r^2}f_\eps(r)=\frac1{\eps^2}f_\eps(r)W'(1-f_\eps(r)^2),\\
f_\eps(0)=0,\ f_\eps(1)=1.
\end{cases}
\end{equation}
Moreover $f_\eps > 0$ and $f_\eps' > 0$ in $(0,1)$.

The map $U_\eps$ in \eqref{eq_urad}, called the \emph{($\RR^N$-valued) Ginzburg--Landau vortex solution} of topological degree one, can be considered as a regularization of the singular harmonic map $n: B^N \rightarrow \Sphere^{N-1}$ given by $n(x) = \frac{x}{|x|}$ for every $x\in B^N$, which is the unique minimizing $\bS^{N-1}$-valued harmonic map for $N \geq 3$ with respect to the boundary condition $n(x)=x$ on $\partial B^N$ (see Brezis, Coron and Lieb \cite{BrezisCoronLieb} and Lin \cite{Lin-CR87}). The question about the minimality of $U_\eps$ for any $\eps > 0$ was raised in dimension $N = 2$ in Bethuel, Brezis and H\'elein \cite[Problem 10, page 139]{BBH_book}, and in higher dimensions in Brezis \cite[Section 2]{Brezis99_PSPM}. It is not hard to see that, when $\eps$ is sufficiently large, $E^{GL}_\eps$ is strictly convex and so $U_\eps$ is the unique bounded critical point of $E^{GL}_\eps$ in $\mcA^{GL}$ for every $N\geq 2$ (see e.g. \cite{BBH_book} or \cite[Remark 3.3]{INSZ_AnnENS}). In dimension $N = 2$, Pacard and Rivi\`ere showed in \cite{Pacard_Riviere} that, for small $\eps > 0$, $U_\eps$ is the unique critical point of $E^{GL}_\eps$ in $\mcA^{GL}$; however, whether $u_\eps$ is the unique minimizer of $E_\eps^{GL}$ for all $\eps > 0$ remains an open question. In dimensions $N \geq 7$, this question was answered positively in recent works of Ignat, Nguyen, Slastikov and Zarnescu \cite{INSZ18_CRAS, INSZ_AnnENS}: $U_\eps$ is the unique minimizer of $E^{GL}_\eps$ in $\mcA^{GL}$ \emph{for every} $\eps > 0$. It is not known whether $U_\eps$ minimizes $E^{GL}_\eps$ in $\mcA^{GL}$ in dimensions $3 \leq N \leq 6$ when $\eps$ is small. However, it is known that for every $\eps>0$, $U_\eps$ is a local minimizer of $E^{GL}_\eps$ in $\mcA^{GL}$ -- for dimension $N=2$, see Mironescu \cite{Mironescu-radial} and also Lieb and Loss \cite{LiebLoss95-JEDP}; for dimension $3 \leq N \leq 6$, see Ignat and Nguyen \cite{IN24-IHP}. 

We note also that, when the domain is the whole space $\R^N$ instead, the minimality (in the sense of De Giorgi) of the vortex solution is available: see Mironescu \cite{Mironescu_symmetry}, Millot and Pisante \cite{Mil-Pis} and Pisante \cite{Pisante11-JFA}. See also \cite{Pino-Felmer-Kow, GravejatPacherieSmets22, Gustafson, OvchinSigal95} for studies on stability issues.

The main aim of this paper is to show that in dimensions $4\leq N\leq 6$ and \emph{for every} $\eps > 0$, $U_\eps$ is  the unique minimizer of $E^{GL}_\eps$  relative to the set of \textbf{gradient field configurations} in $\mcA^{GL}$ (this is often referred to as the Aviles--Giga model).

\subsection{The Aviles--Giga model}

Consider a general non-negative convex $C^2$ potential $W:(-\infty, 1]\to [0, \infty)$ such that $W(0)=0$ and for every $\eps>0$, the Ginzburg--Landau energy $E^{GL}_\eps(U)$ restricted to gradient fields 
$$U=\nabla u \in H^1(B^N,\R^N) \quad \textrm{ such that } \quad U|_{\partial B^N}=Id.$$ 
Within a suitable rescaling (i.e., $\eps E^{GL}_\eps(\nabla u)$), this is the so-called  Aviles--Giga model (introduced with the standard potential $W(t)=t^2/2$).

Note that the ($\RR^N$-valued) Ginzburg--Landau vortex solution $U_\eps$ introduced in \eqref{eq_urad} is a gradient field $U_\eps=\nabla u_\eps$ for some radial function $u_\eps=u_\eps(r)$ determined (up to a constant) by $u'_\eps=f_\eps$ in $(0,1)$ where $f_\eps$ is the unique solution in \eqref{eq_f}. 

We prove the following result: \begin{theorem}
\label{thm:AG}
Assume that $4\leq N\leq 6$ and $W:(-\infty, 1]\to [0, \infty)$ is a $C^2$ non-negative convex function such that $W(0)=0$. For every $\eps>0$, the radially symmetric vortex solution $U_\eps$ in \eqref{eq_urad} is the unique minimizer of $E^{GL}_\eps$ over the set of gradient fields
$\{U=\nabla u \in \mcA^{GL}\}$.
\end{theorem}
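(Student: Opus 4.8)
The natural strategy is a calibration-type argument adapted to gradient fields, following the philosophy of Ignat–Nguyen–Slastikov–Zarnescu but exploiting the extra structure that $U = \nabla u$ provides. I would write $U = \nabla u$ with $u \in H^2(B^N)$, $\nabla u = x$ on $\partial B^N$, and set the competitor against the vortex $u_\eps$ with $\nabla u_\eps = f_\eps(r)\frac{x}{r}$. The key is to find a pointwise inequality of the form $\frac12|\nabla U|^2 + \frac{1}{2\eps^2}W(1-|U|^2) \geq \frac12|\nabla U_\eps|^2 + \frac{1}{2\eps^2}W(1-|U_\eps|^2) + \operatorname{div}(\text{something}) + (\text{nonnegative remainder})$, where the divergence term integrates to zero by the boundary condition. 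The convexity of $W$ lets one dominate the potential difference by its linearization, $W(1-|U|^2) \geq W(1-|U_\eps|^2) + W'(1-|U_\eps|^2)(|U_\eps|^2 - |U|^2)$, so the whole problem reduces to controlling $\frac12 \int |\nabla U|^2 - \frac12 \int |\nabla U_\eps|^2$ together with the weighted term $\frac{1}{2\eps^2}\int W'(1-f_\eps^2)(f_\eps^2 - |\nabla u|^2)$.

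The decisive step is a Hardy-type / Jacobian-type inequality for the gradient term. Writing $U = |U|\,\omega$ formally (or better, decomposing $\nabla u$ via its radial and spherical parts), one wants to show that for the radial profile fixed, the ``angular'' deviation of $\nabla u$ from the pure vortex costs at least as much Dirichlet energy as a certain weighted $L^2$ mass of the modulus deviation — this is exactly where the dimension restriction enters, through the constant $(N-1)$ appearing in the linearized operator in \eqref{eq_f} versus the sharp constant in the relevant Hardy inequality on $\SN$. For gradient fields the relevant ``vorticity-free'' constraint should improve the available Hardy constant compared to the unconstrained case, which is why $N\geq 4$ suffices here rather than $N\geq 7$; concretely, I expect to use that $\nabla u$ has a symmetric Jacobian, so the tangential part of $\nabla u$ on each sphere $\{|x|=r\}$ is itself a gradient on $\SN$ up to lower-order terms, forcing its eigenvalues to interact favorably. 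I would make this rigorous by expanding $u$ in spherical harmonics in the angular variable with radial coefficients, reducing \eqref{eq_f}'s quadratic form to a family of one-dimensional weighted problems indexed by the harmonic degree $k$, and checking positivity mode by mode — the $k=1$ mode corresponding to the vortex itself being the equality case.

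Concretely the steps are: (i) reduce to $|U| \leq 1$ a.e. by a truncation that does not increase the energy (standard for convex $W$); (ii) fix notation $U = \nabla u$, split $\nabla u = g(x)\frac{x}{r} + \tau(x)$ with $\tau \perp x$, and record the identity $|\nabla u|^2 = |\nabla(\text{radial part})|^2 + |\nabla(\text{tangential part})|^2$ after an integration by parts that produces the boundary term; (iii) use convexity of $W$ to pass to the linearized potential; (iv) establish the core weighted inequality $\int_{B^N}\big[\tfrac12|\nabla U|^2 - \tfrac12|\nabla U_\eps|^2\big]\,dx \geq \tfrac{1}{2\eps^2}\int_{B^N} W'(1-f_\eps^2)\big(|U|^2 - f_\eps^2\big)\,dx$ via the mode-by-mode one-dimensional analysis using the ODE \eqref{eq_f} for $f_\eps$ as the ground state; (v) combine (iii)–(iv) to get $E^{GL}_\eps[U] \geq E^{GL}_\eps[U_\eps]$, and track equality cases to conclude uniqueness — equality in convexity forces $|U| = f_\eps$, equality in the Hardy step forces the angular part to be the vortex. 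The main obstacle I anticipate is step (iv): making the gradient-field constraint genuinely usable in the angular Hardy inequality (as opposed to just treating $U$ as an arbitrary vector field), and handling the low-regularity/degeneracy at the origin $r=0$ where the weights $f_\eps^2/r^2$ blow up; I would deal with the latter by a cutoff near the origin plus the known asymptotics $f_\eps(r) \sim cr$ as $r\to 0$.
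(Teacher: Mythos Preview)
Your proposal is essentially the paper's Method 1: convexity of $W$ reduces the problem to showing that the quadratic form $F_\eps[\nabla v] = \int_{B^N}\big[(\Delta v)^2 - \tfrac{1}{\eps^2}W'(1-f_\eps^2)|\nabla v|^2\big]\,dx$ is nonnegative on $H^2_0(B^N)$, and this is then done by expanding $v$ in spherical harmonics and using a Hardy decomposition built from the profile $f_\eps$ (the key identity being $L_\eps f_\eps = -\tfrac{N-1}{r^2}f_\eps$), checking positivity mode by mode.

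Two small corrections. Your step (i), truncation to $|U|\le 1$, is both unnecessary and problematic here: projecting $\nabla u$ onto the unit ball pointwise destroys the gradient structure, so the truncated map is no longer an admissible competitor. Simply drop it; the convexity argument (your step (iii)) works globally without any truncation. Second, the equality case is not the $k=1$ mode: working with the perturbation $v = u - u_\eps \in H^2_0$, equality in $F_\eps[\nabla v]\ge 0$ forces $\partial_r v = 0$ and hence $v\equiv 0$, so the vortex is the unique minimizer. In the paper the mode-by-mode estimate ultimately yields $F_\eps[\nabla v] \ge \tfrac{(N-2)^2}{4}\int r^{-2}(\partial_r v)^2 + (\tfrac{N^2}{2}-2N)\int r^{-2}(|\nabla v|^2 - (\partial_r v)^2)$, both coefficients being nonnegative precisely when $N\ge 4$.
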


Note that the above result holds in dimension $N\geq 7$ as a consequence of \cite{INSZ18_CRAS, INSZ_AnnENS}. We expect the result holds also in dimension $N \in \{2,3\}$. We mention here the work Lorent \cite{Lorent12-ESIAM, Lorent14-AnnPisa} and Lamy and Marconi \cite{LamyMarconi23-SNS} on stability of the vortex solution in dimension $N = 2$ and in the limit $\eps \rightarrow 0$ (for the Aviles--Giga model as well as other micromagnetic models).

\subsection{The $\Sphere^N$-valued Ginzburg--Landau  model}

We consider the following model:
\[
E^{MM}_\eta [M] = \int_{B^N} \Big[\frac{1}{2} |\nabla M|^2 + \frac{1}{2\eta^2} \tilde W(M_{N+1}^2)\Big]\,dx
\]
where $\eta > 0$ and $M=(\nabla m, M_{N+1})$ is a unit-length vector field that is a gradient field in the first $N$ components belonging to
\[
\mcA^{MM} = \{M=(\nabla m, M_{N+1})\in H^1(B^N,\Sphere^N)\, :\, M(x)=(x,0) \textrm{ on } \partial B^N\}.
\]
The non-negative potential $\tilde W: [0,\infty) \rightarrow [0,\infty)$ is a $C^2$ convex function such that  
$\tilde W(0)=0$.

This model comes from micromagnetics where the order parameter $M$ stands for the magnetization in ferromagnetic materials (see \cite{Gioia-James})\footnote{In dimension $N=2$, $E_\eta^{MM}$ is the reduced energy functional in a certain thin-film ferromagnetic regime (see e.g. \cite[Section 4.5]{DKMO02-CPAM} or \cite[Section~7]{Ignat09-SEDP}) where, after a rotation by $\frac{\pi}{2}$ in the first two components of $M$, the condition $\nabla \times (M_1,M_2)=0$ is imposed in the space of admissible configurations in $\mcA^{MM}$. }, and also the Oseen-Frank theory for nematic liquid crystals (see \cite{An-Sh}).
Considering radially symmetric critical points of $E^{MM}_\eta$ in $\mcA^{MM}$, one is led to 
\begin{equation}
M_{\eta}(x) =( \tilde f_\eta(r) \frac{x}{r}, g_\eta(r))\in \mcA^{MM}
	\label{Eq:metaform}
\end{equation}
where the radial profiles $\tilde f_\eta$ and $g_\eta$ satisfy 
\begin{equation}
\label{f2g2}
\tilde f_\eta^2+g_\eta^2=1 \quad \textrm{in} \quad (0,1),
\end{equation}
and the system of ODEs:
\begin{align}
-\tilde f_{\eta}'' - \frac{N-1}{r} \tilde f_{\eta}' + \frac{N-1}{r^2} \tilde f_{\eta}
	&= \lambda(r) \tilde f_{\eta}  \quad \textrm{in} \quad (0,1),
	\label{Eq:MM-fee}\\
-g_{\eta}'' - \frac{N-1}{r} g_{\eta}' 
	&= -\frac1{\eta^2}\tilde W'(g_\eta^2) g_\eta + \lambda(r) g_{\eta}  \quad \textrm{in} \quad (0,1),
	\label{Eq:MM-gee}\\
\tilde f_{\eta}(1) &= 1 \text{ and } g_{\eta}(1) = 0,
	\label{Eq:MM-feegeeBC}
\end{align}
where 
\begin{equation}
\lambda(r)=(\tilde f_{\eta}')^2+\frac{N-1}{r^2}\tilde f_{\eta}^2+(g_{\eta}')^2+\frac1{\eta^2}\tilde W'(g_\eta^2)g_{\eta}^2
\label{Eq:lamDef}
\end{equation} is the Lagrange multiplier due to the unit length constraint in $\mcA^{MM}$. Note that indeed the vortex solution $M_\eta$ in \eqref{Eq:metaform} is of the form $M_\eta=(\nabla m_\eta, M_{\eta, N+1}) \in \mcA^{MM}$ for some radial function $m_\eta=m_\eta(r)$ determined (up to a constant) by $m'_\eta=\tilde f_\eta$ in $(0,1)$.

As proved in \cite{IN24-IHP}, the solutions to \eqref{Eq:metaform}--\eqref{Eq:MM-feegeeBC} satisfy the dichotomy: either $\tilde f_\eta(0) = 0$ or $\tilde f_\eta(0) = 1$. Furthermore, in the latter case, it holds that $N \geq 3$ and $(\tilde f_\eta=1, g_\eta=0)$ in $(0,1)$, which corresponds to the \emph{equator map}
$$
\bar M(x):=(\frac{x}{r}, 0).
$$
In dimension $N \geq 7$, $\bar M$ is the unique minimizing harmonic map from $B^N$ into $\Sphere^N$ in $H^1(B^N,\mathbb{S}^N)$ with with boundary condition $(Id,0)$ on $\partial B^N$ (J\"ager and Kaul \cite{JagerKaul83-JRAM}; see also Sandier and Shafrir \cite{SandierShafrir94-CVPDE} and \cite[Example 1.6]{INSZ_AnnENS}); so $\bar M$ is the unique minimizer of $E^{MM}_\eta$ in $\mcA^{MM}$ \emph{for every} $\eta>0$. Therefore, in the following, we focus on dimensions
$2\leq N\leq 6$
and on {\it escaping} $\Sphere^N$-valued radially symmetric vortex solutions   
$$M_{\eta}^\pm(x) =( \tilde f_\eta(r) \frac{x}{r}, \pm g_\eta(r)) \quad \textrm{with} \quad g_\eta > 0 \textrm{ in } (0,1).$$
It was proved in Hang and Lin \cite{HangLin01-ActaSin} in dimension $N = 2$ and \cite{IN24-IHP} in dimension $3 \leq N \leq 6$ that, for any $\eta > 0$,  \eqref{Eq:metaform}--\eqref{Eq:MM-feegeeBC} has a unique escaping solution $(\tilde f_\eta, g_\eta)$ with $g_\eta > 0$ and $M_\eta^\pm$ are locally minimizers for $E_\eta^{MM}$.  Moreover, $\tilde f_\eta(0) = 0$, $\tilde f_\eta > 0$, $\tilde f_\eta' > 0$ and $g_\eta' < 0$ in $(0,1)$. (See also \cite{LiMelcher18-JFA} for a related work in the context of micromagnetic skyrmions in $\R^2$.)

We prove the following result:
\begin{theorem}
\label{thm2}
Assume $4\leq N\leq 6$ and $\tilde W: [0,\infty) \rightarrow [0,\infty)$ is a $C^2$ non-negative convex function such that  
$\tilde W(0)=0$. For every $\eta>0$, $E^{MM}_{\eta}$ has exactly two minimizers over the set
$\{(\nabla m, M_{N+1})\in \mcA^{MM}\}$ and they are given by the escaping vortex solutions 
$M_{\eta}^\pm(x) = (\tilde f_{\eta}(r)\frac{x}{r}, \pm g_{\eta}(r))$ with $g_\eta > 0$ in $(0,1)$. In particular, minimizers of $E^{MM}_{\eta}$ in $\mcA^{MM}$ are radially symmetric for every $\eta > 0$.
\end{theorem}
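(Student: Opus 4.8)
\emph{Proof proposal.} Write $n(x)=x/|x|$ and $\omega_{N-1}=|\partial B^N|$. First I would produce a minimizer $M_*=(\nabla m_*,M_{*,N+1})$ by the direct method: $\mcA^{MM}$ is weakly closed in $H^1(B^N,\Sphere^N)$ --- weak $H^1$-limits preserve the gradient structure of the first $N$ components, the pointwise constraint $|M|=1$ (via Rellich), and the boundary data --- and $E^{MM}_\eta$ is weakly lower semicontinuous (along a subsequence $M_{N+1}$ converges strongly in $L^2$ and $\tilde W\geq 0$). The key reduction is that, since $|M_{*,N+1}|=\sqrt{1-|\nabla m_*|^2}$ pointwise and $\big|\nabla|M_{*,N+1}|\big|=|\nabla M_{*,N+1}|$ a.e., the energy depends on $M_*$ only through $V_*:=\nabla m_*$: one has $E^{MM}_\eta[M_*]=\mathcal G_\eta[V_*]$ with
\[
\mathcal G_\eta[V]:=\int_{B^N}\Big[\tfrac12|\nabla V|^2+\tfrac12\big|\nabla\sqrt{1-|V|^2}\,\big|^2+\tfrac1{2\eta^2}\tilde W(1-|V|^2)\Big]\,dx .
\]
So it suffices to find the minimizers of $\mathcal G_\eta$ over gradient fields $V=\nabla m$ with $V=x$ on $\partial B^N$, $|V|\leq 1$ and $\sqrt{1-|V|^2}\in H^1$, and then to list the admissible $M$'s lying above them.

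Next I would invoke the symmetrization for gradient fields announced in the abstract (to be established separately): it assigns to $V=\nabla m$ a radial gradient field $V^\star=h(|x|)n(x)=\nabla m^\star$, with $h$ nondecreasing, $h(1)=1$, $\|V^\star\|_{L^\infty}\leq\|V\|_{L^\infty}\leq 1$, vanishing tangential boundary trace, and such that
\[
\int_{B^N}|\nabla V^\star|^2\leq\int_{B^N}|\nabla V|^2,\qquad \int_{B^N}\big|\nabla\sqrt{1-|V^\star|^2}\,\big|^2\leq\int_{B^N}\big|\nabla\sqrt{1-|V|^2}\,\big|^2,
\]
and $\int_{B^N}\Phi(|V^\star|^2)\leq\int_{B^N}\Phi(|V|^2)$ for every convex $\Phi$ (this encodes the $L^2$-invariance and the $L^p$-lowering, $2<p<\infty$, of the abstract). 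Applying the last inequality with $\Phi(t)=\tilde W(1-t)$ --- convex since $\tilde W$ is --- together with the two displayed Dirichlet estimates gives $\mathcal G_\eta[V^\star]\leq\mathcal G_\eta[V]$, so the minimization of $\mathcal G_\eta$ reduces to radial competitors $V=h(r)n$ with $h(1)=1$ (and, replacing $h$ by $|h|$, $0\leq h\leq 1$).

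For such $V$, setting $\tilde f=h$ and $g=\sqrt{1-h^2}$ (so $\tilde f^2+g^2=1$ and $\tilde f\tilde f'+gg'=0$), a direct computation gives
\[
\mathcal G_\eta[V]=\frac{\omega_{N-1}}2\int_0^1\Big[(\tilde f')^2+\tfrac{N-1}{r^2}\tilde f^2+(g')^2+\tfrac1{\eta^2}\tilde W(g^2)\Big]r^{N-1}\,dr ,
\]
whose Euler--Lagrange system is exactly \eqref{Eq:MM-fee}--\eqref{Eq:lamDef}. Here I would appeal to the ODE analysis of \cite{IN24-IHP} (and \cite{HangLin01-ActaSin} for $N=2$), complemented by a short argument identifying the escaping profile as the \emph{global} minimizer among radial competitors (e.g. via the substitution $h=\sin\phi$, $0\le\phi\le\tfrac\pi2$): for $4\leq N\leq 6$, up to the sign of $g$, the unique minimizer of this one-dimensional functional is the escaping profile $(\tilde f_\eta,g_\eta)$ with $g_\eta>0$, $\tilde f_\eta<1$, $\tilde f_\eta(0)=0$, the equator profile $(\tilde f\equiv 1,g\equiv 0)$ being non-minimizing (indeed unstable) for $N\leq 6$. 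Combining the three steps, $\mathcal G_\eta[V_*]=\mathcal G_\eta[V_*^\star]\geq\mathcal G_\eta[\tilde f_\eta\,n]$ with equality throughout, whence the rigidity case of the symmetrization forces $V_*=V_*^\star=\tilde f_\eta(r)n(x)$; since $1-|V_*|^2=g_\eta^2>0$ on the connected set $B^N\setminus\partial B^N$ ($N\geq 2$), the $H^1$ function $M_{*,N+1}=\pm g_\eta$ has a well-defined constant sign, so $M_*\in\{M_\eta^+,M_\eta^-\}$; both attain the minimum, so these are exactly the minimizers and they are radial --- which is Theorem~\ref{thm2}.

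An alternative I would keep in reserve, paralleling Theorem~\ref{thm:AG} and the first of the two methods, is to bypass the symmetrization and prove $E^{MM}_\eta[M]\geq E^{MM}_\eta[M_\eta^+]$ directly, from a pointwise identity for the energy defect combined with the sharp Hardy inequality for gradient fields $\int_{B^N}|\nabla\xi|^2\geq\tfrac{N^2}4\int_{B^N}|x|^{-2}|\xi|^2$ --- an improvement of the constant $\tfrac{(N-2)^2}4$ valid for arbitrary vector fields --- which closes precisely when $\tfrac{N^2}4\geq N$, i.e. $N\geq 4$ (against $N\geq 7$ in the unconstrained case), while $N\leq 6$ is forced only because for $N\geq 7$ the equator $\bar M$ is already the unique minimizer of $E^{MM}_\eta$. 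I expect the crux of either route to be of the same nature. For the symmetrization, it is the construction, from an arbitrary gradient field $V=\nabla m$ on $B^N$, of a radial gradient field $V^\star$ that \emph{simultaneously} does not increase the full-Hessian energy $\int|\nabla^2 m|^2$ (classical rearrangements control $\int|\nabla m|^2$, not $\int|\nabla^2 m|^2$, so this part is genuinely new), puts $|V^\star|^2$ below $|V|^2$ in convex order, and does not increase the auxiliary Dirichlet energy $\int|\nabla\sqrt{1-|V|^2}|^2$ --- together with a matching equality analysis and the mechanism responsible for the restriction $N\geq 4$. For the Hardy route, it is the sharp gradient-field Hardy inequality with constant $\tfrac{N^2}4$ and its matching to a clean pointwise lower bound for $E^{MM}_\eta[M]-E^{MM}_\eta[M_\eta^+]$.
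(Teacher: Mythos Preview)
Your symmetrization route is essentially the paper's second proof, and the reduction to $\mathcal G_\eta$ is sound; the auxiliary estimate $\int|\nabla\sqrt{1-|V^\star|^2}|^2\leq\int|\nabla\sqrt{1-|V|^2}|^2$ is not among the properties listed in the abstract but does hold --- in the paper it arises by symmetrizing $M_{N+1}$ \emph{separately} via the spherical $L^2$-average $\check M_{N+1}(r)=(\fint_{\SN}M_{N+1}^2)^{1/2}$ and observing that the sphere-wise preservation of $|\nabla m|^2$ forces $\check M_{N+1}=\sqrt{1-|\nabla\check m|^2}$, after which Theorem~\ref{Thm:scRe}(iii) supplies the Dirichlet decrease. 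The genuine gap is at $N=4$: the inequality $\int_{B^N}(\Delta\check m)^2\leq\int_{B^N}(\Delta m)^2$ (equivalently your $\int|\nabla V^\star|^2\leq\int|\nabla V|^2$) is proved in Theorem~\ref{main_rearrangement}(iv) unconditionally only for $N\geq 5$; for $N\in\{2,3,4\}$ it requires the side condition $\int_{\SN}m(r\theta)\,\theta\,d\sigma(\theta)=0$, which an arbitrary competitor has no reason to satisfy, and the paper notes explicitly that this restriction cannot be dropped. Hence your main argument covers only $N\in\{5,6\}$, and indeed the paper labels its symmetrization proof ``in dimension $N\geq 5$''.

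For $N=4$ the paper does \emph{not} argue directly on $E_\eta^{MM}$ via a pointwise energy-defect identity. It embeds the constrained problem into the extended $\R^{N+1}$-valued functional $E_{\eps,\eta}$ with $W(t)=t^2$, so that the unit-length constraint is merely penalized; there the convexity reduction (Lemma~\ref{lem_conservationgradu}) together with the mode-by-mode Hardy decomposition of Proposition~\ref{pro} yields the quantitative lower bound $F_{\eps,\eta}[\nabla v]\geq\tfrac{(N-2)^2}{4}\int r^{-2}(\partial_r v)^2+(\tfrac{N^2}{2}-2N)\int r^{-4}|\nabla_{\SN}v|^2$, nonnegative precisely for $N\geq 4$, and one then passes to the limit $\eps\to 0$ using the convergence $U_{\eps,\eta}\to M_\eta^+$ in $H^1$ from \cite{IN24-IHP} and Fatou's lemma. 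Your Hardy-route sketch also misstates the sharp constant: in \eqref{eq_HardyRellich} one has $c_4=3$, not $N^2/4=4$, so a black-box Hardy--Rellich application with constant $N^2/4$ is unavailable at $N=4$; it is the finer spherical-harmonic analysis of Proposition~\ref{pro}, combined with the $\eps\to 0$ approximation scheme, that closes the borderline dimension.
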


As in the case of the Aviles--Giga model, we expect the above result holds also in dimension $N \in \{2,3\}$.

\subsection{The extended model}\label{ssec:1.3}

More generally, we consider a family of extended energy functionals $E_{\eps,\eta}$ depending on two positive parameters $\eps, \eta$ of which $E_\eps^{GL}$ and $E_\eta^{MM}$ are limiting cases when $\eta \rightarrow 0$ and $\eps \rightarrow 0$, respectively:
\begin{equation}
\label{ener}
E_{\eps,\eta}[U]= \int_{B^N} \Big[\frac{1}{2}|\nabla U|^2 + \frac{1}{2\eps^2} W(1 - |U|^2)  + \frac{1}{2\eta^2} \tilde W(U_{N+1}^2)\Big]\,dx, \quad \eps,\eta > 0,
\end{equation}
where $U=(\nabla u, U_{N+1}):B^N\to \R^{N+1}$ is a gradient field in the first $N$ components and belongs to
\[
\mcA = \{U=(\nabla u, U_{N+1}) \in H^1(B^N,\RR^{N+1}): U(x) = (x,0) \text{ on } \partial B^N\}.
\]
Here, $W: (-\infty,1] \rightarrow [0,\infty)$ and $\tilde W: [0,\infty) \rightarrow [0,\infty)$ are non-negative $C^2$ convex potentials such that $W(0) = \tilde W(0) = 0$. We point out that these imply that $W'(0) = 0$, $tW'(t) \geq 0$ in $(-\infty,1] \setminus \{0\}$, and $\tilde W'(t) \geq 0$ in $[0,\infty)$. However, we allow the possibility that  $W$ or $\tilde W$ can be zero in a neighborhood of the origin. 

Radially symmetric critical points of $E_{\eps,\eta}$ in $\mcA$ take the form
\begin{equation}
U_{\eps, \eta}=(f_{\eps,\eta}(r)\frac{x}{r},g_{\eps,\eta}(r)) \in \mcA,
	\label{Eq:feegeeH1}
\end{equation}
where $(f_{\eps,\eta},g_{\eps,\eta})$ satisfies the system of  ODEs
\begin{align}
&-f_{\eps,\eta}'' - \frac{N-1}{r} f_{\eps,\eta}' + \frac{N-1}{r^2} f_{\eps,\eta}
	= \frac{1}{\eps^2} W'(1 -  f_{\eps,\eta}^2 - g_{\eps,\eta}^2) f_{\eps,\eta}
	,\label{Eq:20III21-fee}\\
&-g_{\eps,\eta}'' - \frac{N-1}{r} g_{\eps,\eta}' 
	= \frac{1}{\eps^2} W'(1 -  f_{\eps,\eta}^2 - g_{\eps,\eta}^2) g_{\eps,\eta} - \frac{1}{\eta^2} \tilde W'(g_{\eps,\eta}^2)g_{\eps,\eta}
	,\label{Eq:20III21-gee}\\
&f_{\eps,\eta}(1) 
	= 1  \text{ and } g_{\eps,\eta}(1) = 0
	.\label{Eq:20III21-feegeeBC}
\end{align}
Note that the above implies $f_{\eps,\eta}(0) = 0$ and $g_{\eps,\eta}'(0) = 0$ (see \cite[Lemma A.5]{IN24-IHP}).  Also, note that the first $N$ components of $U_{\eps,\eta}(r)$ is a gradient field $\nabla \varphi_{\eps, \eta}$ for some radial function 
$\varphi_{\eps, \eta}(r)$ determined (up to a constant) by $\varphi_{\eps, \eta}'=f_{\eps,\eta}$ in $(0,1)$.

In dimensions $N \geq 7$, it follows from \cite{INSZ18_CRAS, INSZ_AnnENS} that the \emph{non-escaping} vortex solution
$$\bar U_\eps(x)=(f_\eps(r) \frac{x}{r},0)$$
is the unique global minimizer of $E_{\eps,\eta}$ in $\mcA$ for every $\eps>0$ and $\eta>0$. Therefore, in the following, we focus on dimensions
$2\leq N\leq 6$; we will analyse
{\it escaping} radially symmetric vortex solutions
$$U_{\eps, \eta}^\pm=(f_{\eps,\eta}(r)\frac{x}{r}, \pm g_{\eps,\eta}(r)), \quad g_{\eps,\eta}>0 \textrm{ in } (0,1).$$
It is shown by \cite{IN24-IHP} that such an escaping radially symmetric critical point $U_{\eps,\eta}$ with $g_{\eps,\eta} > 0$ exists if and only if $2 \leq N \leq 6$, $W'(1)>0$, $0 < \eps < \eps_0$ and $\eta > \eta_0(\eps)$ for some $\eps_0 \in (0,\infty)$ and a continuous non-decreasing function $\eta_0: [0,\eps_0) \rightarrow [0,\infty)$ with $\eta_0(0) = 0$. In this case, it is the unique escaping solution of \eqref{Eq:feegeeH1}--\eqref{Eq:20III21-feegeeBC} with $g_{\eps,\eta}>0$ in $(0,1)$; moreover, we have $f_{\eps,\eta}(0) = 0$, $f_{\eps,\eta}^2 + g_{\eps,\eta}^2 < 1$, $f_{\eps,\eta} > 0$, $f_{\eps,\eta}' > 0$, $g_{\eps,\eta}' < 0$ in $(0,1)$. See Section \ref{ssec:1.4} and Figure \ref{Fig1} for more information.

\bigskip

We prove the following theorem:
\begin{theorem}
\label{thm1}
Suppose $4\leq N\leq 6$ and $W: (-\infty,1] \rightarrow [0,\infty)$ and $\tilde W: [0,\infty) \rightarrow [0,\infty)$ are $C^2$ non-negative convex functions satisfying $W(0) = \tilde W(0) = 0$. For every $\eps >0, \eta > 0$, we have the following dichotomy:
\begin{itemize}
\item Either the escaping radially symmetric vortex solutions $U_{\eps, \eta}^\pm$ exist and they are the only two minimizers of $E_{\eps,\eta}$ in $\mcA$,
\item Or the escaping radially symmetric vortex solutions $U_{\eps, \eta}^\pm$ do not exist and the non-escaping vortex solution $\bar U_\eps$ is the unique minimizer of $E_{\eps,\eta}$ in $\mcA$.
\end{itemize}
In particular, minimizers of $E_{\eps,\eta}$ in $\mcA$ are always radially symmetric for every $\eps,\eta > 0$.
\end{theorem}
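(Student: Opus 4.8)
The plan is to reduce the minimization of $E_{\eps,\eta}$ over the infinite-dimensional class $\mcA$ to the one-dimensional problem for the radial profiles $(f,g)$, whose critical points solve \eqref{Eq:20III21-fee}--\eqref{Eq:20III21-feegeeBC} and are classified in \cite{IN24-IHP}. Set $\mathcal{E}_{1d}[f,g]=\int_0^1\big[\tfrac12(f')^2+\tfrac{N-1}{2r^2}f^2+\tfrac12(g')^2+\tfrac1{2\eps^2}W(1-f^2-g^2)+\tfrac1{2\eta^2}\tilde W(g^2)\big]r^{N-1}\,dr$, so that $E_{\eps,\eta}[U_{f,g}]=|\bS^{N-1}|\,\mathcal{E}_{1d}[f,g]$ for $U_{f,g}=(f(r)\tfrac xr,g(r))$. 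A minimizer of $E_{\eps,\eta}$ in $\mcA$ exists by the direct method — the curl-free constraint on the first $N$ components and the boundary datum are weakly $H^1$-closed (curl passes to the distributional limit and $B^N$ is simply connected), the functional is coercive and weakly lower semicontinuous, and the potential terms pass to the limit via Fatou and the compact embedding $H^1\hookrightarrow L^2$ — and one records the a priori bounds $|U|\le 1$, $|U_{N+1}|\le 1$. It then suffices to establish the sharp lower bound $E_{\eps,\eta}[U]\ge|\bS^{N-1}|\,\mathcal{E}_{1d}[\hat f,\hat g]$ for \emph{every} $U=(\nabla u,U_{N+1})\in\mcA$, with $\hat f,\hat g$ radial surrogates built from $U$ satisfying $\hat f(1)=1,\ \hat g(1)=0$, and with equality forcing $U$ to be radial of the form $U_{\hat f,\hat g}$.

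The heart of the matter, and the main obstacle, is this lower bound. For the surrogates one takes the spherical $L^2$-means $\hat f(r)^2=\fint_{\bS^{N-1}}|\nabla u(r\omega)|^2\,d\omega$ and $\hat g(r)^2=\fint_{\bS^{N-1}}U_{N+1}(r\omega)^2\,d\omega$ (the exact choice is delicate); since $\fint_{\bS^{N-1}}|U|^2=\hat f^2+\hat g^2$, Jensen's inequality applied to the convex maps $t\mapsto W(1-t)$ and $t\mapsto\tilde W(t)$ bounds the two potential densities from below by $W(1-\hat f^2-\hat g^2)$ and $\tilde W(\hat g^2)$. For the Dirichlet term one writes $|\nabla U|^2=|\nabla^2u|^2+|\nabla U_{N+1}|^2$, decomposes $\nabla u=(\partial_r u)\tfrac x{|x|}+\tfrac1{|x|}\nabla_{\bS^{N-1}}u$ and the Hessian $\nabla^2u$ into its radial--radial, radial--tangential and tangential--tangential blocks, and exploits the symmetry $\partial_i\partial_j u=\partial_j\partial_i u$ — the defining feature of a gradient field — to get, after integrating over $\{|x|=r\}$ and a Cauchy--Schwarz step,
\[
\fint_{\bS^{N-1}}|\nabla U|^2\ \ge\ (\hat f')^2+\frac{N-1}{r^2}\,\hat f^2+(\hat g')^2+\frac{c_N}{r^2}\,\mathcal{R}(r),
\]
where $\mathcal{R}\ge 0$ collects tangential contributions and $c_N$ is a constant which is \emph{nonnegative precisely for $N\ge 4$}; this is the analogue of the threshold $N\ge 7$ for the unconstrained Ginzburg--Landau functional in \cite{INSZ_AnnENS}, lowered by three by the Hessian symmetry, and it is exactly why the theorem is only conjectured for $N\in\{2,3\}$. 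Absorbing the $r^{-2}$-terms by Hardy's inequality on $(0,1)$ with weight $r^{N-1}$ against the degree-one boundary datum then yields $E_{\eps,\eta}[U]\ge|\bS^{N-1}|\,\mathcal{E}_{1d}[\hat f,\hat g]$. Carrying this through with the sharp constant, while keeping it compatible with the coupling to $U_{N+1}$ and with potentials $W,\tilde W$ that may be flat near $0$, is where the real work lies.

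It remains to minimize $\mathcal{E}_{1d}$ over radial pairs with $f(1)=1$, $g(1)=0$: by \cite{IN24-IHP}, its critical points are the non-escaping $(f_\eps,0)$ and, when $W'(1)>0$, $0<\eps<\eps_0$, $\eta>\eta_0(\eps)$, the escaping $(\tilde f_{\eps,\eta},\pm g_{\eps,\eta})$, and a comparison of the one-dimensional energies (together with the ODE analysis of \cite{IN24-IHP}) shows that the escaping profile is the minimizer in the first regime and $(f_\eps,0)$ in the second — which gives the two alternatives of the theorem. Finally one reads off the equality cases of the lower bound: equality forces $\mathcal{R}\equiv0$ and the vanishing of the tangential part of $\nabla^2u$, hence $u$ radial; equality in Hardy fixes the radial profile; equality in Jensen forces $U_{N+1}$ radial with $|U|^2=\hat f^2+\hat g^2$ and $U_{N+1}^2=\hat g^2$ pointwise; so every minimizer is one of the listed radial solutions, and in the escaping regime the symmetry $U_{N+1}\mapsto-U_{N+1}$ of $E_{\eps,\eta}$ produces exactly the two minimizers $U^\pm_{\eps,\eta}$. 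In particular all minimizers are radially symmetric. (The paper's second method avoids the delicate Dirichlet-term inequality entirely: it symmetrizes the gradient field keeping $\|\nabla u\|_{L^2}$ fixed while decreasing $\|\nabla u\|_{L^p}$, $2<p<\infty$, and $\|\nabla^2u\|_{L^2}$, so that after symmetrization the configuration is radial and one lands directly on the one-dimensional problem above.)
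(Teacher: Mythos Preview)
Your approach—compare any $U=(\nabla u,U_{N+1})\in\mcA$ with its spherical-mean radial surrogate $(\hat f,\hat g)$ and then minimize the one-dimensional energy—is essentially the paper's \emph{second} (symmetrization) method. The Jensen step on the potentials and the Cauchy--Schwarz bound on $\hat g'$ are fine. The gap is in the Dirichlet inequality for the Hessian block: the integrated estimate you are aiming for,
\[
\int_{B^N}|\nabla^2u|^2\,dx\ \ge\ \int_{B^N}\Big[(\hat f')^2+\frac{N-1}{r^2}\hat f^{\,2}\Big]\,dx,
\]
is exactly the assertion $\int(\Delta u)^2\ge\int(\Delta\check u)^2$ of Theorem~\ref{main_rearrangement}(iv), and the paper proves it \emph{only for $N\ge 5$}. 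In dimension $N=4$ it requires the extra constraint $\int_{\SN}u(r\theta)\,\theta\,d\sigma(\theta)=0$, and without it there are counter-examples: take $u(x)=a(r)\,x_1/r$ and follow the spherical-harmonic computation in the proof of Theorem~\ref{main_rearrangement}(iv); the sign of the first-harmonic contribution after the optimal Cauchy--Schwarz/Hardy step is governed by $s_1=\lambda_1+\tfrac{(N-4)^2}{4}-4=N-5$. So your constant $c_N$ is nonnegative only for $N\ge 5$, not $N\ge 4$, and the whole scheme misses the case $N=4$ that the theorem claims.

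The paper's first proof, which \emph{does} cover $N=4$, takes a genuinely different route. It does not compare $U$ with a surrogate built from $U$ itself, but with the actual radial critical point $\Phi$ (escaping or non-escaping as appropriate): write $U=\Phi+(\nabla v,p)$, use convexity of $W,\tilde W$ to reduce to the quadratic form $F_{\eps,\eta}[\nabla v]+\int T_{\eps,\eta}p\cdot p$ (Lemma~\ref{lem_conservationgradu}), observe $T_{\eps,\eta}\ge 0$ (Lemma~\ref{Lem:l1T}), and then prove $F_{\eps,\eta}[\nabla v]\ge 0$ for $N\ge 4$ (Proposition~\ref{pro}) via a Hardy decomposition that exploits the \emph{specific} ODE identity $L_{\eps,\eta}f=-\tfrac{N-1}{r^2}f$ together with $f'>0$. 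This $\eps$-dependent decomposition against the true profile $f$—absent from any generic symmetrization argument—is precisely what buys the extra dimension. Your outline can be made to work for $N\in\{5,6\}$ (and indeed the paper's second proof is restricted to $N\ge 5$), but for $N=4$ you will need to import this idea rather than a comparison with $\hat f$.
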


To complete the picture, we recall facts from \cite{IN24-IHP} on the escaping vs. non-escaping phenomena. The escaping phenomenon is related to the loss of stability of the non-escaping vortex solution $\bar U_\eps$. More precisely, consider the stability operator $\frac{\delta^2 E_{\eps,\eta}}{\delta U_{N+1}^2}$ at $\bar U_\eps$ along the $N+1$ direction:
\[
\bar T_{\eps,\eta} = -\Delta - \frac{1}{\eps^2} W'(1-f_\eps^2) + \frac{1}{\eta^2} \tilde W'(0) .
\]
The first eigenvalue of $\bar T_{\eps,\eta}$ on $H^1_0(B^N,\RR)$ takes the form $\ell(\eps)  + \frac{1}{\eta^2} \tilde W'(0) $ where $\ell(\eps)$ is the first eigenvalue of 
\begin{equation}
L_\eps := -\Delta - \frac{1}{\eps^2} W'(1-f_\eps^2).
	\label{Eq:Lepsdef}
\end{equation}
Then the 
\[
\text{ escaping vortex solutions $U_{\eps,\eta}^\pm$ with $g_{\eps,\eta} > 0$ exists if and only if $\ell(\eps)  + \frac{1}{\eta^2} \tilde W'(0) < 0$.}
\]
When $N \geq 7$ or $W'(1) = 0$, it holds always that $\ell(\eps) > 0$, hence escaping vortex solutions do not exist. When $2 \leq N \leq 6$ and $W'(1) > 0$,
\[
\text{there exists $\eps_0 > 0$ such that $\ell(\eps) > 0$ for $\eps > \eps_0$ and $\ell(\eps) < 0$ for $0 < \eps < \eps_0$.}
\]
Thus, in this case, the function $\eta_0(\eps)$ mentioned above (so that escaping vortex solutions exist if and only if $0 <\eps < \eps_0$ and $\eta > \eta_0(\eps)$) is given by
\[
\eta_0(\eps) = \sqrt{\frac{\tilde W'(0)}{|\ell(\eps)|}} \text{ for } 0 < \eps < \eps_0.
\]
In Figure \ref{Fig1}, we describe the dichotomy of escaping and non-escaping phenomena for minimizers\footnote{Recall from \cite{IN24-IHP} that solutions of \eqref{Eq:20III21-fee}-\eqref{Eq:20III21-feegeeBC} satisfying $g_{\eps,\eta} > 0$, when they exist, are minimizing for $E_{\eps,\eta}$ relatively to the set of radially symmetric configurations.} of $E_{\eps,\eta}$ in radial symmetry in dimension $2 \leq N \leq 6$. Theorem \ref{thm1} asserts that, in dimension $4 \leq N \leq 6$, this picture remains valid in the larger set $\mcA$ of gradient field configurations in the first $N$ components.

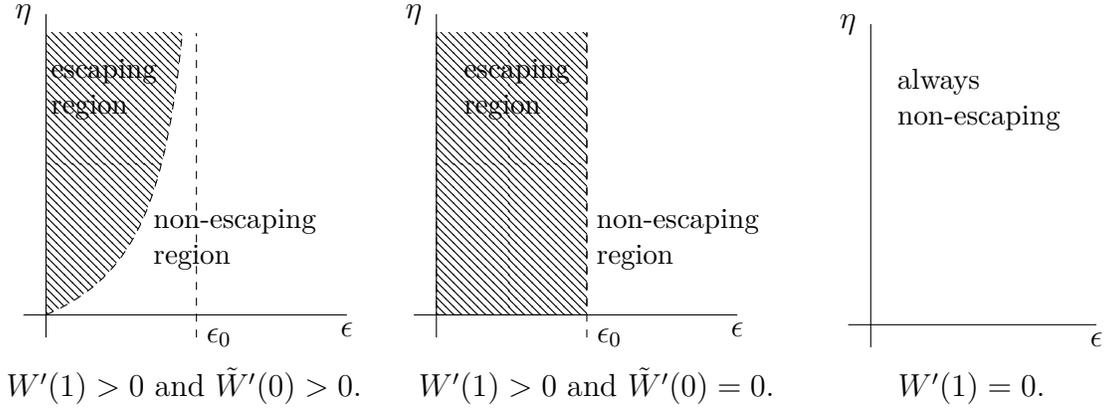
\begin{figure}[h]
\begin{center}
\caption{\small Escaping vs. Non-escaping phenomenon in dimension $2 \leq N \leq 6$. }\label{Fig1}
\begin{tabular}{ccc}
\begin{tikzpicture}
\draw (-.3,0)--(4,0);
\draw (4,-.2) node {$\eps$};
\draw (0,-.3)--(0,4);
\draw (-.3,4) node {$\eta$};
\draw[dashed] (2,-.3)--(2,3.75);
\draw (2.3,-.3) node {$\eps_0$};
\draw (2.7,1) node {\small \parbox{1in}{non-escaping region}};
\begin{scope}
\clip (0,0) rectangle (3,3.75);
\draw [domain=0:2, samples = 25, variable=\t, dashed, pattern = north west lines] plot ({\t},{\t/(2.3-\t)}) -- (0,4);
\draw (0.7,3) node {\small \parbox{.5in}{escaping region}};
\end{scope}
\end{tikzpicture}
&
\begin{tikzpicture}
\draw (-.3,0)--(4,0);
\draw (4,-.2) node {$\eps$};
\draw (0,-.3)--(0,4);
\draw (-.3,4) node {$\eta$};
\draw[dashed] (2,-.3)--(2,3.75);
\draw (2.3,-.3) node {$\eps_0$};
\draw (3.4,1) node {\small \parbox{1in}{non-escaping region}};

\begin{scope}
\clip (0,0) rectangle (3,3.75);
\draw [dashed,pattern = north west lines] (0,0) rectangle (2,4);
\draw (1,3) node {\small \parbox{.5in}{escaping region}};
\end{scope}
\end{tikzpicture}
&
\begin{tikzpicture}
\draw (-.3,0)--(3,0);
\draw (3,-.2) node {$\eps$};
\draw (0,-.3)--(0,4);
\draw (-.3,4) node {$\eta$};
\draw (1.5,3) node {\small \parbox{.9in}{always\\non-escaping}};
\end{tikzpicture}
\\
$W'(1) > 0$ and $\tilde W'(0) > 0$.
&
$W'(1) > 0$ and $\tilde W'(0) = 0$.
&
$W'(1) = 0$.
\end{tabular}
\end{center}
\end{figure}

For the case $\eta = \infty$ (that is the $\RR^{N+1}$-valued Ginzburg--Landau model), we refer the reader to the recent article Ignat and Rus \cite{IgnatRus23-arxiv}. For a similar bifurcation from non-escaping to escaping phenomenon, see Bethuel, Brezis, Coleman and H\'elein \cite{BethuelBrezisColemanHelein}.
\subsection{Ideas of the proofs}\label{ssec:1.4}

Theorems \ref{thm:AG} and \ref{thm2} will be obtained from Theorem \ref{thm1} by taking the limits $\eta\to 0$ or $\eps\to 0$, respectively. For simplicity, instead of describing the proof of Theorem \ref{thm1} (which is the main result), we explain instead the strategy of the proof in the case $\eta = 0$, i.e. Theorem~\ref{thm:AG} for the Aviles--Giga model. We will present two methods of proof. Roughly speaking, the first method follows the strategy in \cite{INSZ18_CRAS, INSZ_AnnENS} adapted to the situation of gradient field configurations, and the second method uses a symmetrization procedure. 

\subsubsection*{Method 1}

It was observed in \cite{INSZ18_CRAS, INSZ_AnnENS} that the convexity of the potential $W$ implies the inequality
\begin{equation}
E_\eps^{GL}[U_\eps + V] - E_\eps^{GL}[U_\eps] \geq \frac{1}{2} \int_{B^N} L_\eps V \cdot V\,dx =: F_\eps[V] \text{ for } V \in H_0^1(B^N,\RR^N),
	\label{Eq:ConvRed}
\end{equation}
where $L_\eps$ is the operator defined in \eqref{Eq:Lepsdef}. Recall that in dimension $N \geq 7$, the first eigenvalue $\ell(\eps)$ of $L_\eps$ is positive, $F_\eps[V] > 0$ for $V \in H_0^1(B^N,\RR^N) \setminus \{0\}$, and hence, $U_\eps$ is the unique minimizer of $E_\eps^{GL}$ in $\mcA^{GL}$. In dimension $2 \leq N \leq 6$, one has $\ell(\eps) < 0$ for $\eps < \eps_0$, and so it is not clear from the above argument if $U_\eps$ is a minimizer of $E_\eps^{GL}$ in $\mcA^{GL}$. However, in the current case of gradient field configurations (i.e. $V = \nabla v$), we are able to conclude in dimension $N \geq 4$. 

To appreciate the idea, consider the limit $\eps \rightarrow 0$ where $L_\eps \rightarrow -\Delta - \frac{N-1}{r^2} =: L_*$ as bounded linear operators from $H_0^1(B^N,\RR^N)$ into $H^{-1}(B^N,\RR^N)$. Although $L_*$ is not positive definite when $N \leq 6$, we have the following inequality\footnote{Here $H_0^2(B^N,\RR)$ is the closure of $\mathcal{C}_c^\infty(B^N,\RR)$ in $H^2(B^N,\RR)$. In particular, if $v \in H_0^2(B^N,\RR)$, then $v$ and $\nabla v$ have zero trace on the boundary.} for gradient fields in dimension $N \geq 4$:
\[
\int_{B^N} L_* (\nabla v) \cdot (\nabla v)\,dx = \int_{B^N} \Big((\Delta v)^2 - \frac{N-1}{r^2} |\nabla v|^2\Big)\,dx \geq 0 \text{ for } v \in H_0^2(B^N,\RR),
\]
i.e. $L_*$ is positive definite on the subspace of gradient fields in $H^1_0(B^N,\RR^N)$.
This is a consequence of  the sharp Hardy inequality for gradient fields (see e.g. \cite{Beckner08-FM, Cazacu20-PRSE, GhoussoubMoradifam11-MAnn, TertikasZog07-AdvM}):
\begin{equation}\label{eq_HardyRellich}
\int_{\R^N}(\Delta v)^2\,dx\geq c_N\int_{\R^N}\frac{|\nabla v|^2}{r^2}\,dx, \text{ for } v \in H_0^2(B^N,\RR) \text{ where } c_N:=\begin{cases}\frac{N^2}{4}&\text{ if }N\geq 5\\
3 & \text{ if }N=4\\
\frac{25}{36}&\text{ if }N=3,\\
0 & \text{ if } N = 2.\end{cases}
\end{equation}
For the general case $\eps > 0$, we combine the above idea with the machineries in \cite{INSZ18_CRAS, INSZ_AnnENS} and \cite{IN24-IHP}, based on the Hardy decomposition method.

Unfortunately, the above strategy does not work in dimension $N = \{2,3\}$ (for the proof, see  Appendix \ref{app:F<0}):

\begin{proposition}\label{Prop:F<0}
In dimension $N \in \{2,3\}$, there exists a function $v \in \mathcal{C}_c^2(B^N \setminus \{0\}) \subset H_0^2(B^N)$ such that $F_\eps(\nabla v) < 0$ when $\eps$ is sufficiently small.
\end{proposition}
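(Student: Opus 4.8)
\textbf{Proof proposal for Proposition \ref{Prop:F<0}.}

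The plan is to produce, in dimensions $N\in\{2,3\}$, an explicit radial test function $v$ supported away from the origin and away from $\partial B^N$ for which the quadratic form $F_\eps(\nabla v)=\frac12\int_{B^N}L_\eps(\nabla v)\cdot(\nabla v)\,dx$ becomes negative once $\eps$ is small. The natural strategy is to fix the profile ansatz $v=v(r)$ so that $\nabla v = v'(r)\frac{x}{r}$, compute $F_\eps(\nabla v)$ as a one-dimensional integral, and then exploit the fact that, as $\eps\to 0$, the operator $L_\eps=-\Delta-\frac1{\eps^2}W'(1-f_\eps^2)$ converges (in the bounded-operator sense recorded in the excerpt) to $L_*=-\Delta-\frac{N-1}{r^2}$. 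Thus it suffices to find $v\in\mathcal C_c^2(B^N\setminus\{0\})$ with
\[
\int_{B^N}\Big((\Delta v)^2-\frac{N-1}{r^2}|\nabla v|^2\Big)\,dx<0,
\]
since then, by the operator convergence $L_\eps\to L_*$ and the compact support of $\nabla v$ (which keeps everything in a fixed annulus where $f_\eps\to 1$ uniformly and the error terms are controlled), $F_\eps(\nabla v)<0$ for all sufficiently small $\eps$.

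So the core task reduces to showing that the Hardy--Rellich-type inequality $\int(\Delta v)^2\geq (N-1)\int\frac{|\nabla v|^2}{r^2}$ \emph{fails} for $N\in\{2,3\}$, i.e. that $N-1>c_N$ with $c_N$ as in \eqref{eq_HardyRellich}: indeed $N-1=1>\frac{25}{36}=c_3$ and $N-1=1>0=c_2$. The sharpness of the constants $c_N$ in \eqref{eq_HardyRellich} means that there exist functions $v$ (not necessarily compactly supported away from $0$) making the ratio $\int(\Delta v)^2/\int\frac{|\nabla v|^2}{r^2}$ as close to $c_N$ as desired, hence strictly below $N-1$; the remaining point is to upgrade this to a genuinely admissible test function in $\mathcal C_c^2(B^N\setminus\{0\})$. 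Concretely I would take a radial near-extremizer of the Hardy--Rellich quotient — for the radial reduction one writes $v=v(r)$, uses $\Delta v=v''+\frac{N-1}{r}v'$ and $|\nabla v|^2=(v')^2$, substitutes a power-type profile $v'(r)\sim r^{-\beta}$ with $\beta$ chosen near the optimal exponent — truncate it smoothly near $r=0$ and near $r=1$, and check that the truncation errors are negligible. Because the strict inequality $N-1>c_N$ is quantitative, a sufficiently fine truncation keeps the quotient strictly below $N-1$, so the truncated $v\in\mathcal C_c^2(B^N\setminus\{0\})$ still satisfies $\int(\Delta v)^2<(N-1)\int\frac{|\nabla v|^2}{r^2}$.

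The main obstacle I anticipate is the truncation/cutoff bookkeeping near the origin: the natural extremizers of the radial Hardy--Rellich quotient are only logarithmically or borderline integrable, so the cutoff at $r=\delta$ contributes boundary-type terms (from integrating $\Delta v$ against itself by parts, these involve $v'(\delta)$, $v''(\delta)$ on the sphere of radius $\delta$) that must be shown to vanish, or at least stay uniformly small, as $\delta\to 0$; one has to pick the profile and the cutoff scale carefully so that these error terms are of lower order than the strictly negative main term. A secondary, more routine, point is to make the passage from $L_\eps$ to $L_*$ fully rigorous: since $\nabla v$ is supported in a fixed compact annulus $A\subset\subset B^N\setminus\{0\}$, one uses that $\frac1{\eps^2}W'(1-f_\eps^2)\to \frac{N-1}{r^2}$ appropriately on $A$ (this is exactly the content of the stated convergence $L_\eps\to L_*$ as operators $H_0^1\to H^{-1}$, tested against the fixed vector field $\nabla v$), so that $F_\eps(\nabla v)\to \tfrac12\int_{B^N}L_*(\nabla v)\cdot(\nabla v)\,dx<0$, which forces $F_\eps(\nabla v)<0$ for $\eps$ small. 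Finally I would double-check the constant $N-1$ is indeed the relevant one, i.e. that $L_*(\nabla v)\cdot(\nabla v)$ integrates to $\int((\Delta v)^2-\frac{N-1}{r^2}|\nabla v|^2)$ — this is the identity already displayed in the excerpt, valid for $v\in H_0^2$, and it is the source of the comparison with \eqref{eq_HardyRellich}.
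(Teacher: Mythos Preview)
Your overall strategy --- reduce to the limiting quadratic form $F_*[\nabla v]=\int_{B^N}\big((\Delta v)^2-\frac{N-1}{r^2}|\nabla v|^2\big)dx$ and then pass from $F_*$ to $F_\eps$ using that $\nabla v$ lives in a fixed compact annulus --- is the same as the paper's, and your passage-to-the-limit argument is essentially correct. The problem is the specific test function.

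A \emph{radial} ansatz cannot give a negative value of $F_*$. Indeed, for $v=v(r)\in\mathcal C_c^2(B^N\setminus\{0\})$ one has $\Delta v=v''+\frac{N-1}{r}v'$ and $|\nabla v|^2=(v')^2$; expanding and integrating $2(N-1)\int_0^1 r^{N-2}v''v'\,dr=-(N-1)(N-2)\int_0^1 r^{N-3}(v')^2\,dr$ by parts (boundary terms vanish since $v'$ is compactly supported in $(0,1)$) gives
\[
F_*[\nabla v]=|\SN|\int_0^1 r^{N-1}(v'')^2\,dr\;\geq\;0.
\]
So the radial Hardy--Rellich constant is at least $N-1$ in every dimension, and your ``radial near-extremizer'' plan collapses: the near-extremizers witnessing $c_N<N-1$ for $N\in\{2,3\}$ are necessarily non-radial (they live in the first spherical-harmonic mode). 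This is why the paper takes instead the ansatz $v(x)=a(r)\,x_1/r$ with $a\in\mathcal C_c^2((0,1))$. After the spherical decomposition this lands in the $\lambda_k=N-1$ block, where the coefficient of $r^{N-5}v_k^2$ in $F_*$ is $(N-1)(2N-8)<0$ for $N\in\{2,3\}$; the paper then builds $a$ (via a substitution $a(r)=r^{-\frac{N-4}{2}}b(r)$ and a logarithmic cutoff for $b$) so that this negative zeroth-order term dominates. Your cutoff discussion is relevant, but it has to be carried out on this non-radial profile, not on a radial one.
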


\subsubsection*{Method 2}

As mentioned above, the second method of proof uses a symmetrization procedure. For that, we use the spherical coordinates: for every $x\in B^N$, we write $x=r\theta$ with $r=|x|$ and $\theta\in \SN$. For $v\in H^1(B^N, \R)$, we associate the radial function $\check v=\check v(r)$ given by the  formula
\be
\label{v-rad}
\check v(r) = -\int_r^1 \Big(\fint_{\mathbb{S}^{N-1}} |\nabla v(s\theta)|^2d\sigma(\theta) \Big)^{1/2}\,ds \leq 0, \quad r\in (0,1).
\ee
One can think of this as a kind of rearrangement in the spherical harmonic decomposition of $v$ (see Section \ref{ssec:3.2} for more detailed discussion).

We prove the following.

\begin{theorem}\label{main_rearrangement}
Let $N\geq 2$, $v \in H^1(B^N,\RR)$ and $\check v$ be associated to $v$ by \eqref{v-rad}. The following conclusions hold.
\begin{enumerate}[label=(\roman*)]
\item The map $v \mapsto \check v$ is a Lipschitz continuous map from $H^1(B^N,\RR)$ into $H^1_0(B^N,\RR)$. Moreover, 
\[
\int_{\mathbb{S}^{N-1}} |\nabla \check v(r\theta)|^2\,d\sigma(\theta) = \int_{\mathbb{S}^{N-1}} |\nabla v(r\theta)|^2\,d\sigma(\theta) \text{ for a.e. } r \in (0,1).
\]

\item Let $G: [0,\infty) \times [0,\infty) \rightarrow [0,\infty)$ be continuous. If $G$ is convex in the second variable, then
\begin{align*}
\int_{B^N} G(r,|\nabla \check v|^2)\,dx \leq \int_{B^N} G(r, |\nabla v|^2)\,dx.
\end{align*}
In particular, for any $2 < p < \infty$,
\[
\int_{B^N} |\nabla \check v|^p\,dx \leq \int_{B^N} |\nabla v|^p\,dx.
\]

\item If $v \in H_0^1(B^N)$, i.e. if $v = 0$ on $\partial B^N$ and $1 \leq p \leq 2$, then
\[
 \int_{\SN} |\check v(r\theta)|^p\,d\sigma(\theta) \geq \int_{\SN} |v(r\theta)|^p\,d\sigma(\theta)  \text{ for a.e. } r \in (0,1).
\]

\item Assume in addition that $v\in H^2(B^N,\R)$ with the boundary condition $\nabla v(x)=cx$ on $\partial B^N$ for some constant $c \in \RR$. Then $\check v \in H^2(B^N,\R)$ and  $\nabla \check v(x) = |c|x$ on $\partial B^N$. If $N \geq 5$, then
\begin{equation}\label{Eq:RearrIneql}
\int_{B^N} (\Delta\check v)^2\,dx \leq \int_{B^N} (\Delta v)^2\,dx.
\end{equation}

If $N\in\{2, 3,4\}$, \eqref{Eq:RearrIneql} continues to hold provided that $\int_{\mathbb{S}^{N-1}}v(r\theta) \theta\,d\sigma(\theta)=0$ for a.e. $r \in (0,1)$. In either case, equality is attained if and only if $v$ is radially symmetric and $|v'| = \check v'$ in $(0,1)$.
\end{enumerate}
\end{theorem}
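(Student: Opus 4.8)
The plan is to analyze the rearrangement $v \mapsto \check v$ through the spherical harmonic decomposition of $v$ and to prove each of the four assertions by reducing to one-dimensional (radial) computations. Write $v(r\theta) = \sum_{k \geq 0} \sum_j a_{k,j}(r) Y_{k,j}(\theta)$ where $\{Y_{k,j}\}$ is an orthonormal basis of spherical harmonics on $\SN$, with $-\Delta_{\SN} Y_{k,j} = \mu_k Y_{k,j}$, $\mu_k = k(k+N-2)$. The key Parseval-type identity is
\[
\int_{\SN} |\nabla v(r\theta)|^2\,d\sigma(\theta) = \sum_{k,j} \Big( |a_{k,j}'(r)|^2 + \frac{\mu_k}{r^2}|a_{k,j}(r)|^2 \Big) =: \Phi(r)^2,
\]
so that by \eqref{v-rad}, $\check v' = -\Phi$ on $(0,1)$, and $\check v$ carries only a $k=0$ (radial) mode with the same "angular-integrated gradient density" $\Phi(r)^2$. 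For (i), I would first check $\check v \in H^1$: since $|\check v'(r)| = \Phi(r)$ and $\int_0^1 \Phi(r)^2 r^{N-1}\,dr < \infty$, together with the elementary bound $|\check v(r)| \leq \big(\int_r^1 \Phi(s)^2\,ds\big)^{1/2}(1-r)^{1/2}$ and Hardy's inequality on $(0,1)$ to control $\int \check v^2 r^{N-1}\,dr$, we get $\check v \in H^1_0$ (the boundary value $\check v(1)=0$ is built in). Lipschitz continuity follows because $v \mapsto \Phi$ is Lipschitz from $H^1$ into $L^2((0,1), r^{N-1}\,dr)$ (it is a composition of the linear map $v \mapsto$ its gradient-coefficients with the $\ell^2$-norm, which is $1$-Lipschitz pointwise) and then $\Phi \mapsto \check v$ is linear and bounded. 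The angular-gradient identity is immediate from $|\nabla\check v(r\theta)|^2 = |\check v'(r)|^2 = \Phi(r)^2$.

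For (ii), by the identity in (i), $|\nabla \check v(r\theta)|^2 = \fint_{\SN}|\nabla v(r\theta)|^2\,d\sigma(\theta)$ is exactly the angular average of $|\nabla v(r\theta)|^2$ at radius $r$; so Jensen's inequality applied to the convex function $t \mapsto G(r,t)$ gives $G(r,|\nabla\check v(r\theta)|^2) = G\big(r, \fint |\nabla v(r\theta')|^2 d\sigma(\theta')\big) \leq \fint G(r,|\nabla v(r\theta')|^2)\,d\sigma(\theta')$ for each fixed $r$, and integrating $r^{N-1}\,dr$ over $(0,1)$ yields the claim; the $L^p$ case is $G(r,t)=t^{p/2}$. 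For (iii), with $v \in H^1_0$ all modes satisfy $a_{k,j}(1)=0$, so $a_{k,j}(r) = -\int_r^1 a_{k,j}'(s)\,ds$ and hence $|a_{k,j}(r)| \leq \int_r^1 |a_{k,j}'(s)|\,ds$; meanwhile $|\check v(r)| = \int_r^1 \Phi(s)\,ds$ with $\Phi(s)^2 = \sum_{k,j}(|a_{k,j}'|^2 + \mu_k r^{-2}|a_{k,j}|^2) \geq \sum_{k,j}|a_{k,j}'(s)|^2$. Then $\int_{\SN}|v(r\theta)|^2\,d\sigma = \sum_{k,j}|a_{k,j}(r)|^2 \leq \sum_{k,j}\big(\int_r^1|a_{k,j}'|\big)^2 \leq \big(\int_r^1 (\sum_{k,j}|a_{k,j}'(s)|^2)^{1/2}\,ds\big)^2 \leq \big(\int_r^1\Phi(s)\,ds\big)^2 = |\check v(r)|^2 = \int_{\SN}|\check v(r\theta)|^2\,d\sigma$, using Minkowski's integral inequality for the $\ell^2$-norm in the middle step; the case $1\le p\le 2$ then follows from the $p=2$ case combined with the fact that $\check v(r)$ is constant in $\theta$ via Jensen/power-mean (for fixed $r$, $\fint|v|^p \le (\fint |v|^2)^{p/2} \le (\fint|\check v|^2)^{p/2} = \fint|\check v|^p$).

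Part (iv) is the main obstacle. The boundary condition $\nabla v = cx$ on $\partial B^N$ forces $a_{1,j}'(1) = c\,\delta$-type constants (the $k=1$ modes encode the linear part $c x$, since the components of $x/|x|$ are, up to normalization, the degree-one spherical harmonics) and $a_{k,j}'(1)=0$ for $k\neq 1$ when $k \ge 2$, while $a_0'(1)$ is determined by the radial part; one computes that $\check v'(1) = \Phi(1) = |c|$, giving $\nabla\check v(x) = |c|x$ on $\partial B^N$. For \eqref{Eq:RearrIneql}, I would use the decomposition of the bilaplacian energy over modes:
\[
\int_{B^N}(\Delta v)^2\,dx = \sum_{k,j}\int_0^1 \Big( a_{k,j}'' + \frac{N-1}{r}a_{k,j}' - \frac{\mu_k}{r^2}a_{k,j}\Big)^2 r^{N-1}\,dr,
\]
and compare term-by-term against $\int_0^1\big(\check v'' + \frac{N-1}{r}\check v'\big)^2 r^{N-1}\,dr$ where $\check v' = -\Phi$, $\Phi^2 = \sum_{k,j}(|a_{k,j}'|^2 + \mu_k r^{-2}|a_{k,j}|^2)$. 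The natural route is to integrate by parts: expanding the square in each mode and in $\check v$, the cross terms produce, after integration by parts with the prescribed boundary data, expressions of the form $\int_0^1\big(|a_{k,j}''|^2 + \text{lower order}\big)$ that can be bounded below using the sharp Hardy–Rellich inequality \eqref{eq_HardyRellich} (which supplies the constants $c_N$) applied mode-by-mode on the half-line with weight $r^{N-1}$; the restriction to $N\ge 5$ in general, versus $N\in\{2,3,4\}$ only under the vanishing of the $k=1$ mode $\int_{\SN} v(r\theta)\theta\,d\sigma=0$, comes precisely from whether the Hardy–Rellich constant $c_N$ is large enough to dominate the angular eigenvalue contributions $\mu_k$ of the lowest surviving modes: when $k=1$ is present and $N\le 4$ the constant $c_N$ is too small, so one must kill that mode. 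I expect the delicate bookkeeping to be: (a) correctly tracking boundary terms in the integrations by parts (the trace conditions on $v$ and $\nabla v$ ensure no stray boundary contributions survive, but this must be verified in weighted $H^2$), and (b) organizing the mode-by-mode comparison so that, after summing over $(k,j)$ at fixed $r$ and using that $\Phi^2$ is the sum of the mode densities, the pointwise-in-$r$ inequality $\big(\check v'' + \frac{N-1}{r}\check v'\big)^2 \le \sum_{k,j}\big(a_{k,j}'' + \frac{N-1}{r}a_{k,j}' - \frac{\mu_k}{r^2}a_{k,j}\big)^2$ either holds directly or holds after the integrated Hardy–Rellich correction. For the equality case: equality in Jensen/Minkowski throughout forces all but one mode to vanish, and equality in \eqref{Eq:RearrIneql} then forces the surviving mode to be $k=0$, i.e. $v$ radial with $|v'|=\check v'$.
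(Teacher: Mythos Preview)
Your treatment of (i)--(iii) is essentially correct and close to the paper's argument (your route for (iii) via Minkowski's integral inequality followed by the power-mean inequality is a legitimate alternative to the paper's direct differential inequality on $A(r)=\fint_{\SN}|v(r\theta)|^p\,d\sigma$; note, though, the minor sign slip $\check v'=\Phi\ge 0$, not $-\Phi$).

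The genuine gap is in (iv), and it is exactly the step you flag as ``the main obstacle'' without resolving. After writing $\check v'=\Phi$ with $\Phi^2=\sum_{k}\big((v_k')^2+\lambda_k r^{-2}v_k^2\big)$, the quantity $(\check v'')^2$ is a \emph{nonlinear} function of the mode data, and neither a pointwise inequality of the form $(\check v''+\tfrac{N-1}{r}\check v')^2\le\sum_k(\cdots)^2$ nor a direct appeal to the Hardy--Rellich inequality \eqref{eq_HardyRellich} suffices. The paper's mechanism is different and quite specific: one writes $\Phi\,\Phi'=\sum_k\big(v_k' v_k''+\lambda_k r^{-2}v_k v_k'-\lambda_k r^{-3}v_k^2\big)$, introduces free parameters $t_k$ to regroup each summand as $v_k'(v_k''+t_k r^{-2}v_k)+r^{-1}v_k\big((\lambda_k-t_k)r^{-1}v_k'-\lambda_k r^{-2}v_k\big)$, and then applies Cauchy--Schwarz twice (first within each mode, then across modes) to obtain a pointwise bound $|\check v''|^2\le (v_0'')^2+\sum_{k\ge 1}\big[(v_k''+t_k r^{-2}v_k)^2+\lambda_k^{-1}((\lambda_k-t_k)r^{-1}v_k'-\lambda_k r^{-2}v_k)^2\big]$. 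Only after integrating this and comparing with the expansion of $\int(\Delta v)^2$ does one invoke a one-dimensional Hardy inequality (not Hardy--Rellich) $\int_0^1 r^{N-3}(v_k')^2\ge\tfrac{(N-4)^2}{4}\int_0^1 r^{N-5}v_k^2$. The dimensional dichotomy then comes from the \emph{choice of $t_k$}: for $N\ge 5$ one takes $t_k=0$ and the resulting coefficient $s_k=\lambda_k+\tfrac{(N-4)^2}{4}-4$ is positive for every $k\ge 1$; for $N\in\{2,3,4\}$ one takes $t_k=(2-\sqrt5)\lambda_k$, which kills the $(v_k')^2$ term and leaves $s_k>0$ only for $k\ge N+1$, whence the need to assume the $k=1$ modes vanish. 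Your proposal attributes the dimensional threshold to the size of the Hardy--Rellich constant $c_N$, but $c_N$ plays no role in the paper's proof of (iv).

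A smaller correction on boundary data: after normalizing $v=0$ on $\partial B^N$, the condition $\nabla v(x)=cx$ forces $v_k(1)=0$ for all $k$ and $v_k'(1)=0$ for all $k\ge 1$, with $v_0'(1)=c$ (up to the normalization of $\phi_0$). The $k=1$ modes do \emph{not} carry the boundary value $cx$; they satisfy homogeneous Dirichlet and Neumann conditions like every $k\ge 1$ mode.
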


To apply Theorem \ref{main_rearrangement} to prove Theorem \ref{thm:AG}, we only need to note that for $\nabla u \in \mcA^{GL}$, by integrating by parts using $\nabla u(x) = x$ on $\partial B^N$,
\begin{align*}
\int_{B^N} |\nabla^2 u|^2\,dx 
	&= \int_{B^N} (\Delta u)^2\,dx - \int_{\mathbb{S}^{N-1}} \underbrace{\sum_{i,j=1}^N \partial_{i} \partial_j u(\theta) (\delta_{ij} - \theta_i \theta_j)}_{\nabla^2 u : (I_N - \theta \otimes \theta)}\,d\sigma(\theta)\\
	&= \int_{B^N} (\Delta u)^2\,dx - \int_{\mathbb{S}^{N-1}} \big[(N-1) \partial_r u(\theta) + \Delta_{\SN} u(\theta)\big]\,d\sigma(\theta)\\
	&= \int_{B^N} (\Delta u)^2\,dx -(N-1)|\mathbb{S}^{N-1}|.
\end{align*}
(Here we have used the fact that $I_N - \theta \otimes \theta$ is the projection onto the tangent hyperplane $T_\theta \SN$.) 
Therefore, in dimension $N \geq 5$, Theorem \ref{main_rearrangement} immediately implies that minimizers of $E_\eps^{GL}$ in $\{U=\nabla u \in \mcA^{GL}\}$ are radially symmetric. Thanks to the characterization of radially symmetric critical points in \cite{IN24-IHP}, Theorem \ref{thm:AG} follows. Theorem \ref{thm2} also follows in a similar manner. For Theorem \ref{thm1}, we need an extra symmetrization for the $U_{N+1}$ component; see Section \ref{ssec:4.1}.

In Theorem \ref{main_rearrangement}(iv), the requirement $\int_{\mathbb{S}^{N-1}}v(r\theta) \theta\,d\sigma(\theta)=0$ in dimension $N \in \{2, 3, 4\}$ cannot simply be dropped due to existence of counter-examples. (For examples of symmetry breaking phenomena in the context of Hardy's inequality for gradient fields in dimension $N \in \{3,4\}$, see e.g. \cite{Cazacu20-PRSE}.)

Our rearrangement is related to a vectorial rearrangement in Lieb and Loss \cite{LiebLoss95-JEDP}. For $V\in H^1(B^N,\R^N)$, one associates the radially symmetric vector field $\check V$ defined by 
\begin{equation}\label{def_sym}
\check{V}(x)= \left(\fint_{\mathbb{S}^{N-1}}|V(r\theta)|^2d\sigma(\theta)\right)^\frac{1}{2}\frac{x}{r}.
\end{equation}
It was shown in \cite{LiebLoss95-JEDP} that, provided $V(x) = x$ on $\partial B^N$ and $\int_{\mathbb{S}^{N-1}} V(r\theta)d\sigma(\theta) = 0$ for a.e. $r \in (0,1)$,
\[
\int_{B^N} |\nabla \check V|^2\,dx \leq \int_{B^N} |\nabla V|^2\,dx.
\]
It is readily seen that if $V = \nabla v$, then $\check V = \nabla \check v$. Thus, when $N \in \{2, 3,4\}$, the conclusion \eqref{Eq:RearrIneql} in Theorem \ref{main_rearrangement} can be deduced from the above result in \cite{LiebLoss95-JEDP}.

Fewer rearrangement methods are known to prove symmetry of solutions of higher order elliptic equations than for second order ones. This can be partly explained by the absence of a maximum principle for higher order elliptic equations or systems, which makes Schwarz symmetrization methods inapplicable in general. There are some exceptions, see for instance the two papers of Nadirashvili \cite{Nadirashvili95} and Talenti \cite{Talenti81} where it is shown by rearrangement arguments that minimizers of $\frac{|\{u\neq 0\}|^2\int_{\R^2} (\Delta u)^2}{\int_{\R^2}u^2}$ are radially symmetric.\footnote{For other results on symmetry of solutions of higher order elliptic equations or systems which do not use rearrangement inequalities, see e.g. \cite{Berchio08, Sirakov05, ferrero07, Gazzola10} and the references therein.}

More recently, a rearrangement principle developed in Lenzmann and Sok \cite{lenzmann21} deals with the radial symmetry of optimizers of Gagliardo--Nirenberg type inequalities of arbitrarily high orders, as well as  ground states of higher order non-linear Schrödinger equations of the form
\[Lv+\omega v=v|v|^{p-2}\text{ in }\R^N\]
where $L$ is a certain pseudodifferential operator, $\omega>0$ and $p\in (2,p^*)$ for some critical exponent $p^* > 2$ depending only on the dimension $N$ and the operator $L$. The rearrangement principle here is based on Schwarz rearrangement of the Fourier transform: any function $v:\R^N\to \mathbb{C}$ is symmetrized as $v^\sharp=\mathcal{F}^{-1}\left[|\mathcal{F}[v]|^*\right]$, where $\mathcal{F}$ is the Fourier transform and $w^*$ designates the radially decreasing Schwarz rearrangement of $w$.

We make a comparison between the rearrangement $\check v$ in Theorem \ref{main_rearrangement} and the rearrangement $v^\sharp$ of \cite{lenzmann21} in the following table.

\begin{center}
\begin{tabular}{c|c|c}
&$\check v$ on $B^N$ & $v^\sharp$ on $\RR^N$ \\
\hline
$L^2$-norm & $\|\check v\|_{L^2} \geq \|v\|_{L^2}$ & $\|v^\sharp\|_{L^2} = \|v\|_{L^2}$\\
\hline
$L^p$-norm, $1 \leq p < 2$ & $\|\check v\|_{L^p} \geq \|v\|_{L^p}$ & ?\\
\hline
$L^p$-norm, even integer $p > 2$ & ? & $\|v^\sharp\|_{L^p} \geq \|v\|_{L^p}$\\
\hline
\parbox{1in}{\centering $\dot H^1$-norm} & $\|\nabla\check v\|_{L^2} = \|\nabla v\|_{L^2}$ & $\|\nabla v^\sharp\|_{L^2} \leq \|\nabla v\|_{L^2}$\\
\hline
\parbox{1.5in}{\centering $\dot W^{1,p}$-norm, $p > 2$} & $\|\nabla\check v\|_{L^p} \leq \|\nabla v\|_{L^p}$ & ?\\
\hline
\parbox{1in}{\centering $\dot H^2$-norm} & $\|\Delta\check v\|_{L^2} \leq \|\Delta v\|_{L^2}$ & $\|\Delta v^\sharp\|_{L^2} \leq \|\Delta v\|_{L^2}$\\
\hline
\parbox{2in}{\centering $\dot H^s$-norm, $s > 0$} & ? & $\| v^\sharp\|_{\dot H^s} \leq \|v\|_{\dot H^s}$
\end{tabular}
\end{center}

As an application Theorem \ref{main_rearrangement}, we consider the radial symmetry of `ground state' solutions to the nonlinear eigenvalue problem 
\begin{equation}
\begin{cases}
\Delta^2 v  = \lambda v + |v|^{p-2} v \text{ on } B^N,\\
v = \partial_r v = 0 \text{ on } \partial B^N,
\end{cases}
	\label{Eq:BHE}
\end{equation}
where $1 \leq p < 2$. See Section \ref{ssec:RApp}.

\bigskip
\noindent{\bf Acknowledgments.} R.I. is partially supported by the ANR projects ANR-21-CE40-0004
and ANR-22-CE40-0006-01. L.N. was partially supported by the ANR LabEx CIMI (grant ANR-11-LABX-0040) within the French State Programme ``Investissements d'Avenir'' during his visit to Universit\'e Toulouse III -- Paul Sabatier. Part of this work was done while all authors visited the Hausdorff Research Institute for Mathematics in Bonn, funded by the Deutsche Forschungsgemeinschaft (DFG, German Research Foundation) under Germany's Excellence Strategy -- EXC-2047/1 -- 390685813, as part of the Trimester Program on ``Mathematics of Complex Materials''.

\section{First proof of main results}
\label{sec2}

\subsection{Proof of Theorem \ref{thm1}}

In this section we give the first proof of Theorem \ref{thm1} based on the strategy in \cite{INSZ18_CRAS, INSZ_AnnENS}  and exploiting the additional structure of a gradient field for the first $N$-components of the current admissible configurations.

Recall that $\ell(\eps)$ is the first eigenvalue of the operator $L_\eps = -\Delta - \frac{1}{\eps^2} W'(1-f_\eps^2)$ and that the escaping radially symmetric critical points $U_{\eps,\eta}^\pm$ with $g_{\eps,\eta} > 0$ exist if and only if $W'(1) > 0$ and $\ell(\eps)+\frac{1}{\eta^2}\tilde{W}'(0)<0$ (equivalently $0 < \eps < \eps_0$ and $\eta > \eta_0(\eps)$). For fixed $\eps > 0, \eta > 0$, we let 
\[
\Phi  
	=\begin{cases}U_{\eps,\eta}^+&\text{ if } W'(1) > 0 \text{ and } \ell(\eps)+\frac{1}{\eta^2}\tilde{W}'(0)<0 \text{ (i.e. there is an escaping solution),}\\
\bar U_\eps&\text{ otherwise}\text{ (i.e. there is no escaping solution),}\end{cases}
\]
and
\[
(f,g)
	=\begin{cases}(f_{\eps,\eta},g_{\eps,\eta})&\text{ if } W'(1) > 0 \text{ and } \ell(\eps)+\frac{1}{\eta^2}\tilde{W}'(0)<0,\\
(f_\eps,0)&\text{ otherwise},
	\end{cases}
\]
so that $\Phi(x) = (f(r) \frac{x}{r}, g(r))$.

We consider  the differential operators $L_{\eps,\eta}$ and $T_{\eps,\eta}$:
\[
L_{\eps,\eta}=-\Delta - \frac{1}{\eps^2} W'(1-f^2-g^2), \quad T_{\eps,\eta}=-\Delta - \frac{1}{\eps^2} W'(1-f^2-g^2) + \frac{1}{\eta^2} \tilde W'(g^2).
\]
For any $v\in H^2_0(B^N,\R)$, we let
\begin{equation}
F_{\eps,\eta}[\nabla v]=\int_{B^N}\left((\Delta v)^2- \frac{1}{\eps^2}  W'(1-f^2-g^2)|\nabla v|^2\right)\,dx = \int_{B^N} L_{\eps,\eta}(\nabla v) \colon (\nabla v)\,dx.
	\label{Eq:Feedef}
\end{equation}
Note that  $\int_{B^N}|\nabla^2 v|^2\,dx =\int_{B^N}(\Delta v)^2\,dx$ since $v\in H^2_0(B^N)$. Note also that, in the non-escaping case, $\Phi = \bar U_\eps$, $L_{\eps,\eta} = L_\eps$, $T_{\eps,\eta} = \bar T_{\eps,\eta}$ and $F_{\eps,\eta} = F_\eps$ introduced in sections \ref{ssec:1.3} and \ref{ssec:1.4}. 

As in \cite{INSZ18_CRAS, INSZ_AnnENS}, the starting point of the proof is the following consequence of the convexity of $W$ and $\tilde W$:
\begin{lemma}
\label{lem_conservationgradu}
For any $v\in H^2_0(B^N,\R)$ and $p\in H^1_0(B^N,\R)$,
\[E_{\eps,\eta}\big[\Phi +(\nabla v,p)\big]-E_{\eps,\eta}[\Phi]
	\geq\frac{1}{2}F_{\eps,\eta}[\nabla v]+\frac12 \int_{B^N} T_{\eps,\eta}p\cdot p\, dx.
\]
\end{lemma}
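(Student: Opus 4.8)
The plan is to expand the energy $E_{\eps,\eta}$ around the critical point $\Phi$ and exploit convexity of the potentials $W$ and $\tilde W$ to discard the remaining nonlinear terms. Writing $\Phi = (f(r)\frac{x}{r}, g(r))$ and perturbing by $(\nabla v, p)$ with $v \in H^2_0(B^N,\R)$ and $p \in H^1_0(B^N,\R)$, I would first record that $\Phi + (\nabla v, p) \in \mcA$ (the boundary conditions are preserved since $v$ and $\nabla v$ have zero trace and $p$ has zero trace). The quadratic part of $E_{\eps,\eta}$ in the gradient terms gives $\int_{B^N} \frac12|\nabla^2 v|^2 + \frac12|\nabla p|^2\,dx$, and since $v \in H^2_0$, $\int_{B^N}|\nabla^2 v|^2\,dx = \int_{B^N}(\Delta v)^2\,dx$ by integration by parts; the cross term $\int \nabla \Phi : \nabla(\nabla v, p)\,dx$ is handled by integrating by parts and invoking the Euler--Lagrange equations \eqref{Eq:20III21-fee}--\eqref{Eq:20III21-gee} satisfied by $(f,g)$, so it cancels against the linearization of the potential terms.

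The main point is the treatment of the potential terms. I would write $W(1 - |\Phi + (\nabla v,p)|^2) = W\big( (1 - f^2 - g^2) - (2 f \frac{x}{r}\cdot \nabla v + 2 g p + |\nabla v|^2 + p^2) \big)$. Since $W$ is convex, $W(a - b) \geq W(a) - W'(a) b$ for all admissible $a,b$; applying this with $a = 1 - f^2 - g^2$ gives a lower bound in which the linear-in-perturbation part of $-W'(a)b$ combines with the cross term above (via the equations for $f,g$) and the quadratic remainder is exactly $-W'(1-f^2-g^2)\big(|\nabla v|^2 + p^2\big)$ up to the sign, i.e. the potential contributes at least $-\frac{1}{2\eps^2}\int W'(1-f^2-g^2)(|\nabla v|^2 + p^2)\,dx$ to the second variation. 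The same convexity argument applied to $\tilde W(U_{N+1}^2) = \tilde W\big( (g+p)^2 \big) = \tilde W(g^2 + 2gp + p^2)$ with $a = g^2$, $b = -(2gp+p^2)$ gives a contribution of at least $\frac{1}{2\eta^2}\int \tilde W'(g^2) p^2\,dx$ after the linear terms are absorbed. Summing, the terms involving $|\nabla v|^2$ and $p^2$ reassemble precisely into $\frac12 F_{\eps,\eta}[\nabla v] + \frac12 \int_{B^N} T_{\eps,\eta} p \cdot p\,dx$, recalling the definitions \eqref{Eq:Feedef} of $F_{\eps,\eta}$ and of $T_{\eps,\eta}$; note there is no $p\cdot v$ cross coupling because $W'(1-f^2-g^2)$ multiplies $|\nabla v|^2 + p^2$ without mixing, and the cross term $\nabla \Phi : \nabla(\nabla v, p)$ splits cleanly between the first $N$ components and the last one.

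I would be careful to justify the use of the convexity inequality pointwise almost everywhere, which requires that the argument $1 - f^2 - g^2 - (\cdots)$ stays in $(-\infty, 1]$ (the domain of $W$) — this holds since $|\Phi + (\nabla v, p)|^2 \geq 0$ — and that the arguments of $\tilde W$ stay in $[0,\infty)$, which is automatic. One also needs enough integrability to make each term finite: $v \in H^2_0(B^N)$ gives $\nabla v \in L^{2^*}$ hence $|\nabla v|^2 \in L^{2^*/2}$, and $p \in H^1_0$ gives $p \in L^{2^*}$, so combined with local boundedness of $f, g, W'(1-f^2-g^2), \tilde W'(g^2)$ (from the regularity of the radial profiles established in \cite{IN24-IHP}) every integral is well-defined; for $N \geq 4$ the exponents work out. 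The step I expect to require the most care is the bookkeeping of the cross term $\int_{B^N}\nabla\Phi : \nabla(\nabla v,p)\,dx$: one must integrate by parts (legitimate since $\Phi$ is smooth away from the origin with the right decay, and $v, \nabla v, p$ vanish near $\partial B^N$ in trace), handle the singularity at the origin by an approximation/cutoff argument, and then verify that what comes out matches exactly the linear terms produced by the convexity inequalities applied to $W$ and $\tilde W$ — this is where the Euler--Lagrange system \eqref{Eq:20III21-fee}--\eqref{Eq:20III21-gee} is used, and getting all the signs and the factor $\frac{N-1}{r^2}f$ term (coming from $\operatorname{div}(f\frac{x}{r})$ computations and the $|\nabla(f\frac{x}{r})|^2$ expansion) to cancel correctly is the delicate part.
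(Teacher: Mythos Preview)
Your proposal is correct and follows essentially the same approach as the paper: expand the energy, apply the convexity inequalities $W(a-b) \geq W(a) - W'(a)b$ and $\tilde W(a-b) \geq \tilde W(a) - \tilde W'(a)b$ pointwise, and eliminate the first-order terms via criticality of $\Phi$. The paper handles the cross term in one line by invoking the weak form of criticality directly (i.e.\ $\int_{B^N}\big[\nabla\Phi:\nabla(\nabla v,p) - \eps^{-2}W'(1-|\Phi|^2)\Phi\cdot(\nabla v,p) + \eta^{-2}\tilde W'(g^2)gp\big]\,dx = 0$), which spares you the integration by parts, the cutoff near the origin, and the bookkeeping with the $\frac{N-1}{r^2}f$ term that you flagged as delicate.
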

\begin{proof} We have
\begin{align}
E_{\eps,\eta}\big[\Phi+(\nabla v,p)\big]-E_{\eps,\eta}[\Phi]
	&= \frac{1}{2}\int_{B^N}
		\Big\{ 2 \nabla \Phi : \nabla(\nabla v, p) + |\nabla^2 v|^2 + |\nabla p|^2\nonumber\\
			&\qquad 
			+ \frac{1}{\eps^2} [W(1-|\Phi+(\nabla v,p)|^2)-W(1-|\Phi|^2)] \nonumber\\
			&\qquad + \frac{1}{\eta^2}[\tilde{W}((g+p)^2)-\tilde{W}(g^2)]
		\Big\}\,dx.\label{Eq:EEexp}
\end{align}
By the convexity of $W$ and $\tilde W$, we have
\begin{align*}
W(1-|\Phi+(\nabla v,p)|^2)-W(1-|\Phi|^2)
	&\geq W'(1-|\Phi|^2)\left(|\Phi|^2-|\Phi+(\nabla v,p)|^2\right)\\
	&= -W'(1-|\Phi|^2)\left(2\Phi \cdot(\nabla v,p) + |\nabla v|^2 + p^2\right)\\
\tilde{W}((g+p)^2)-\tilde{W}(g^2)
	&\geq \tilde{W}'(g^2)\left((g+p)^2 - g^2\right)\\
	&= \tilde{W}'(g^2)\left(2g p + p^2\right).
\end{align*}
Since $\Phi$ is a critical point of $E$, we also have
\[
\int_{B^N}
		\Big\{  \nabla \Phi : \nabla(\nabla v, p) 
			- \frac{1}{\eps^2} W'(1-|\Phi|^2)\Phi \cdot(\nabla v,p) \\
			+ \frac{1}{\eta^2} \tilde{W}'(g^2) g p
		\Big\}\,dx = 0.
\]
Inserting the last two estimates into \eqref{Eq:EEexp} we arrive at 
\begin{align*}
&E_{\eps,\eta}\left[\Phi+(\nabla v,p)\right]-E_{\eps,\eta}[\Phi]\\
	&\qquad \geq \frac{1}{2}\int_{B^N}\left(|\nabla^2 v|^2+|\nabla p|^2- \frac{1}{\eps^2}W'(1-f^2-g^2)\left(|\nabla v|^2+p^2\right)+ \frac{1}{\eta^2} \tilde{W}'(g^2)p^2\right)\,dx,
\end{align*}
which is precisely the conclusion. 
\end{proof}

We will frequently make use of the following Hardy decomposition:

\begin{lemma}[{\cite[Lemma A.1]{INSZ3}}]\label{Lem:HardyD}
Let $A: B^N \rightarrow \R^{N \times N}$ be a $C^1$ non-negative semi-definite symmetric form, i.e. $A(x) \xi \cdot \xi \geq 0$ for every $x \in B^N$ and $\xi \in \RR^N$. We define the operator
\[
L := -\nabla \cdot(A \nabla ) 
\]
and consider a smooth positive function $\psi: B^N \rightarrow \RR$. Then for every $u \in \mathcal{C}_c^\infty(B^N,\R)$, we have the following Hardy decomposition:
\[
\int_{B^N} Lu \cdot u\,dx = \int_{B^N} \psi^2 A(x) \nabla (\frac{u}{\psi}) \cdot \nabla (\frac{u}{\psi})\,dx + \int_{B^N} \frac{u^2}{\psi^2} L\psi \cdot \psi\,dx.
\]
\end{lemma}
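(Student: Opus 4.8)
The statement to prove is Lemma~\ref{Lem:HardyD}, the Hardy decomposition identity. Let me sketch how I would approach this.

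\medskip

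The plan is to verify the identity by a direct computation, substituting $u = \psi w$ where $w := u/\psi \in \mathcal{C}_c^\infty(B^N,\R)$ (this is legitimate since $\psi$ is smooth and positive, so $w$ is indeed smooth and compactly supported). First I would expand $\nabla u = \psi \nabla w + w \nabla \psi$ and compute the quadratic form $A(x)\nabla u \cdot \nabla u$ pointwise. Since $A$ is symmetric, this gives
\[
A\nabla u \cdot \nabla u = \psi^2\, A\nabla w \cdot \nabla w + 2\psi w\, A \nabla w \cdot \nabla \psi + w^2\, A\nabla \psi \cdot \nabla \psi.
\]
Integrating by parts (no boundary terms since $u$, hence $w$, is compactly supported), one has $\int_{B^N} Lu\cdot u\,dx = \int_{B^N} A\nabla u \cdot \nabla u\,dx$, so it remains to show that the sum of the last two terms above integrates to $\int_{B^N} \frac{u^2}{\psi^2} L\psi \cdot \psi\,dx = \int_{B^N} w^2 \psi (L\psi)\,dx$.

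\medskip

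The key step is handling the cross term $2\int_{B^N} \psi w\, A\nabla w \cdot \nabla \psi\,dx$. Here I would write $\psi w\, A\nabla w \cdot \nabla \psi = \tfrac12 \nabla(w^2) \cdot (\psi A \nabla \psi)$ (using symmetry of $A$ again to move $\nabla w$ against $A\nabla\psi$), and integrate by parts to obtain $-\tfrac12\int_{B^N} w^2\, \nabla \cdot(\psi A \nabla \psi)\,dx$. Expanding $\nabla\cdot(\psi A\nabla\psi) = \nabla\psi \cdot A\nabla\psi + \psi\, \nabla\cdot(A\nabla\psi) = A\nabla\psi\cdot\nabla\psi - \psi (L\psi)$, the term $\int_{B^N} w^2\, A\nabla\psi\cdot\nabla\psi\,dx$ exactly cancels the last term from the pointwise expansion above, and what survives is precisely $\int_{B^N} w^2 \psi (L\psi)\,dx$, as desired. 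Collecting everything yields the claimed decomposition.

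\medskip

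There is no real obstacle here — the result is a routine integration-by-parts identity (a completion-of-squares / ground-state substitution argument), and the non-negativity hypothesis on $A$ plays no role in the identity itself (it is only relevant for the applications, where one wants each term to have a sign). The only points requiring a word of care are that $\psi$ being smooth and strictly positive on $B^N$ ensures $w = u/\psi$ is an admissible test function in $\mathcal{C}_c^\infty$ with the same support as $u$, so all integrations by parts are justified with vanishing boundary contributions, and that the symmetry of $A$ is used twice to rearrange the bilinear expressions. For the purposes of this paper we may simply cite \cite{INSZ3} where this is carried out in detail.
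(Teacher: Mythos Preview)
Your proof is correct and is exactly the standard ground-state substitution argument by which such Hardy decomposition identities are established; the paper itself does not prove this lemma but simply cites \cite[Lemma~A.1]{INSZ3}, where the same computation is carried out. (A minor remark: when you write that integration by parts yields $-\tfrac12\int w^2\,\nabla\cdot(\psi A\nabla\psi)\,dx$, this refers to the integrand without the prefactor $2$; once multiplied by $2$ the coefficient is $-1$, which is what you correctly use in the subsequent cancellation.)
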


Before moving on with the proof, let us make a simple observation on the non-negativity of $T_{\eps,\eta}$.
\begin{lemma}\label{Lem:l1T}
The first eigenvalue of $T_{\eps,\eta}$ on $H^1_0(B^N,\R)$ is $\left(\ell(\eps)+\frac{1}{\eta^2}\tilde{W}'(0) \right)_+$ and the corresponding first eigenspace of $T_{\eps,\eta}$
\begin{itemize}
\item coincides with the first eigenspace of $L_\eps$ when $\ell(\eps)+\frac{1}{\eta^2}\tilde{W}'(0) \geq 0$ (i.e. when $g \equiv 0$), and
\item is generated by $g$ when $\ell(\eps)+\frac{1}{\eta^2}\tilde{W}'(0) < 0$ (i.e. when $g > 0$).
\end{itemize}
In particular $T_{\eps,\eta}$ is non-negative semi-definite on $H^1_0(B^N,\R)$ and
\[
\int_{B^N} T_{\eps,\eta}p\cdot p\, dx\geq \int_{B^N} \Big[h^2 \left|\nabla(\frac{p}{h})\right|^2+\left(\ell(\eps)+\frac{1}{\eta^2}\tilde{W}'(0)\right)_+p ^2\Big]\,dx \geq 0,
\]
where $h$ is any first eigenfunction of $T_{\eps,\eta}$.
\end{lemma}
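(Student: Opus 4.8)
The plan is to determine the bottom of the spectrum of $T_{\eps,\eta}$ on $H^1_0(B^N,\R)$, together with its eigenspace, by treating separately the two cases of the dichotomy recalled above: the \emph{non-escaping} case, in which $(f,g)=(f_\eps,0)$, and the \emph{escaping} case, in which $(f,g)=(f_{\eps,\eta},g_{\eps,\eta})$ with $g>0$. Once the first eigenvalue is known to be non-negative, the ``in particular'' part will follow from a ground-state representation obtained via the Hardy decomposition of Lemma~\ref{Lem:HardyD}.

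In the non-escaping case, $1-f^2-g^2=1-f_\eps^2$ and $\tilde W'(g^2)=\tilde W'(0)$, so $T_{\eps,\eta}=L_\eps+\frac1{\eta^2}\tilde W'(0)\,\mathrm{Id}$ is a constant spectral shift of $L_\eps$; hence its first eigenvalue is $\ell(\eps)+\frac1{\eta^2}\tilde W'(0)$ and its first eigenspace is that of $L_\eps$. Being in the non-escaping regime forces $\ell(\eps)+\frac1{\eta^2}\tilde W'(0)\geq 0$ (either $W'(1)=0$, in which case $\ell(\eps)>0$, or $W'(1)>0$ and $\ell(\eps)+\frac1{\eta^2}\tilde W'(0)\geq 0$ by definition), so this eigenvalue equals its positive part. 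In the escaping case, the key point is that, reading $g=g_{\eps,\eta}$ as a radial function on $B^N$ (so $\Delta g=g''+\frac{N-1}r g'$), the ODE \eqref{Eq:20III21-gee} is exactly $T_{\eps,\eta}g=0$; and by the facts recalled from \cite{IN24-IHP} ($g'(0)=0$, $g'<0$ on $(0,1)$, $g(1)=0$, hence $g>0$ on $[0,1)$), $g$ extends to an element of $H^1_0(B^N,\R)$ that is strictly positive in $B^N$. Thus $0$ is an eigenvalue of $T_{\eps,\eta}$ with a strictly positive eigenfunction, so by the simplicity and sign characterization of the principal eigenvalue of a Schrödinger operator with Dirichlet boundary conditions, $0$ is the first eigenvalue and $\Span(g)$ its first eigenspace; since here $\ell(\eps)+\frac1{\eta^2}\tilde W'(0)<0$, its positive part is again $0$.

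For the refined inequality, set $\mu_1:=\big(\ell(\eps)+\frac1{\eta^2}\tilde W'(0)\big)_+\geq0$, the first eigenvalue in both cases, so $T_{\eps,\eta}\geq0$ on $H^1_0(B^N,\R)$; let $h$ be any first eigenfunction, so $T_{\eps,\eta}h=\mu_1 h$ with $h>0$ in $B^N$, and write $T_{\eps,\eta}=-\Delta+V$ with the bounded potential $V:=-\frac1{\eps^2}W'(1-f^2-g^2)+\frac1{\eta^2}\tilde W'(g^2)$. Applying Lemma~\ref{Lem:HardyD} with $A=I_N$, $L=-\Delta$, $\psi=h$ and substituting $-\Delta h=(\mu_1-V)h$ gives the ground-state representation
\[
\int_{B^N}T_{\eps,\eta}u\cdot u\,dx=\int_{B^N}\Big[h^2\big|\nabla(u/h)\big|^2+\mu_1 u^2\Big]\,dx\qquad\text{for }u\in\mathcal{C}^\infty_c(B^N,\R),
\]
which is scale-invariant in $h$, so the choice of first eigenfunction is immaterial. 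The stated inequality for all $p\in H^1_0(B^N,\R)$ then follows by density: the left-hand side is continuous for the $H^1_0$ topology (as $V$ is bounded), while $p\mapsto\int_{B^N}h^2|\nabla(p/h)|^2\,dx$ is sequentially lower semicontinuous along an approximating sequence $u_k\to p$ in $H^1_0$, because $u_k/h\rightharpoonup p/h$ weakly in $H^1_\loc(B^N)$ since $h$ is locally bounded away from $0$.

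The step I expect to be the main obstacle is this last density argument, since $h$ degenerates on $\partial B^N$: one has to pass from $\mathcal{C}^\infty_c(B^N)$ to $H^1_0(B^N)$ by weak lower semicontinuity of the localized functionals $w\mapsto\int_K h^2|\nabla w|^2\,dx$ over compacts $K\Subset B^N$ together with a monotone exhaustion $K\uparrow B^N$. A secondary point requiring care is verifying that $g$ (respectively the first eigenfunction of $L_\eps$) genuinely belongs to $H^1_0(B^N)$ and is a principal eigenfunction, which is exactly where the regularity and positivity facts recalled from \cite{IN24-IHP} enter.
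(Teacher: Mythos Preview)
Your proof is correct and follows essentially the same approach as the paper: split into the non-escaping case (where $T_{\eps,\eta}=L_\eps+\frac{1}{\eta^2}\tilde W'(0)$ is a constant shift of $L_\eps$) and the escaping case (where $T_{\eps,\eta}g=0$ with $g>0$ forces $0$ to be the principal eigenvalue), and then apply the Hardy decomposition Lemma~\ref{Lem:HardyD} with $\psi=h$. The paper's proof is terser and simply invokes Lemma~\ref{Lem:HardyD} without discussing the passage from $\mathcal{C}_c^\infty$ to $H^1_0$; your explicit treatment of this density step via lower semicontinuity on compacts is a welcome addition and handles precisely the boundary degeneracy of $h$ that the paper leaves implicit.
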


\begin{proof}
Recall that, by \cite[Theorem 2.4]{IN24-IHP} on escaping and non-escaping critical points of $E_{\eps,\eta}$, when $g\equiv 0$, we have $\ell(\eps)+\frac{1}{\eta^2}\tilde{W}'(0)  \geq 0$, while, when $g > 0$, $\ell(\eps)+\frac{1}{\eta^2}\tilde{W}'(0) < 0$. The first bullet point is then clear as  $T_{\eps,\eta} = L_\eps + \frac{1}{\eta^2}\tilde{W}'(0)$ and the first eigenvalue of $L_\eps$ is $\ell(\eps)$. When $g > 0$, we have
\[
T_{\eps,\eta} g = 0,
\]
and so $g$ must be a first eigenfunction of $T_{\eps,\eta}$ and the first eigenvalue of $T_{\eps,\eta}$ must be zero. The second bullet point follows. The last assertion follows from the Hardy decomposition Lemma \ref{Lem:HardyD} with the decomposition $p = h \frac{p}{h}$.
\end{proof}

The last ingredient for the proof of Theorem \ref{thm1} is:
\begin{proposition}
\label{pro}
Suppose $N \geq 4$. For any $v\in H^2_0(B^N,\R)$ we have
\[F_{\eps,\eta}[\nabla v]\geq \frac{(N-2)^2}{4}\int_{B^N}\frac{(\partial_r v)^2}{r^2}\,dx +\left(\frac{N^2}{2}-2N\right)\int_{B^N}\frac{|\nabla v|^2-(\partial_r v)^2}{r^2}\,dx \geq 0.
\]
\end{proposition}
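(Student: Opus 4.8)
\textbf{Proof proposal for Proposition \ref{pro}.}

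The plan is to reduce the lower bound on $F_{\eps,\eta}[\nabla v]$ to two pieces: an estimate for the Rellich-type term $\int_{B^N}(\Delta v)^2\,dx$, and an estimate controlling $\frac{1}{\eps^2}W'(1-f^2-g^2)|\nabla v|^2$ by $\frac{N-1}{r^2}|\nabla v|^2$. For the latter, I would use that $f=f_{\eps,\eta}$ (or $f_\eps$) satisfies the radial ODE \eqref{Eq:20III21-fee} (resp. \eqref{eq_f}), which rearranges to $\frac{1}{\eps^2}W'(1-f^2-g^2)=\frac{f''+\frac{N-1}{r}f'}{f}-\frac{N-1}{r^2}$ wherever $f>0$; since $f>0$, $f'>0$ in $(0,1)$ and $f$ solves a linear-looking ODE with $f(0)=0$, one obtains (as in \cite{INSZ_AnnENS, IN24-IHP}) the bound $\frac{1}{\eps^2}W'(1-f^2-g^2)\le \frac{N-1}{r^2}$ on $(0,1)$. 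Hence
\[
F_{\eps,\eta}[\nabla v]\ \ge\ \int_{B^N}\Big((\Delta v)^2-\frac{N-1}{r^2}|\nabla v|^2\Big)\,dx
=\int_{B^N} L_*(\nabla v)\cdot(\nabla v)\,dx,
\]
so it suffices to prove the stated inequality for the limiting operator $L_*=-\Delta-\frac{N-1}{r^2}$, by density for $v\in\mathcal C_c^\infty(B^N)$.

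For the main term I would decompose $v$ into spherical harmonics, $v(r\theta)=\sum_{k\ge 0} v_k(r)Y_k(\theta)$ with $Y_k$ normalized and $-\Delta_{\SN}Y_k=\mu_k Y_k$, $\mu_k=k(k+N-2)$. Both $\int(\Delta v)^2$ and the Hardy terms decouple across modes, and the quantity $\frac{(N-2)^2}{4}\int\frac{(\partial_r v)^2}{r^2}+(\frac{N^2}{2}-2N)\int\frac{|\nabla v|^2-(\partial_r v)^2}{r^2}$ similarly splits, the $k=0$ mode contributing only a $\frac{(N-2)^2}{4}$ radial-Hardy term and each $k\ge 1$ mode contributing $\frac{(N-2)^2}{4}\int_0^1 v_k'^2 r^{N-3}dr+(\frac{N^2}{2}-2N)\mu_k\int_0^1 v_k^2 r^{N-5}dr$. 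For each fixed $k$ this becomes a one-dimensional inequality for $\phi=v_k$ on $(0,1)$ with weights $r^{N-1}$: one must show
\[
\int_0^1\Big(-\phi''-\tfrac{N-1}{r}\phi'+\tfrac{\mu_k}{r^2}\phi\Big)^2 r^{N-1}\,dr-(N-1)\!\int_0^1\!\big(\phi'^2+\tfrac{\mu_k}{r^2}\phi^2\big)r^{N-3}\,dr
\]
dominates the claimed combination of $\int\phi'^2 r^{N-3}$ and $\mu_k\int\phi^2 r^{N-5}$. Expanding the square and integrating by parts (all boundary terms vanish since $\phi$ comes from $v\in\mathcal C_c^\infty$, which kills contributions at $r=0,1$; for $k=0$ note $\phi$ and $\phi'$ vanish near both endpoints), one is left with a polynomial-in-$N$-and-$\mu_k$ quadratic form in the three moments $\int\phi''^2 r^{N-1}$, $\int\phi'^2 r^{N-3}$, $\int\phi^2 r^{N-5}$, together with the one-dimensional Hardy inequalities $\int\phi'^2 r^{N-3}\ge(\frac{N-4}{2})^2\int\phi^2 r^{N-5}$ and $\int\phi''^2r^{N-1}\ge(\frac{N-2}{2})^2\int\phi'^2 r^{N-3}$ (valid for compactly supported $\phi$ when $N\ge 5$; the $N=4$ case of the endpoint exponent needs the sharper constant $3$ from \eqref{eq_HardyRellich} on the full gradient, which is why the hypothesis is $N\ge 4$). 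The goal is to check that the resulting coefficients are non-negative for all integers $N\in\{4,5,6,\dots\}$ and all $\mu_k=k(k+N-2)$, $k\ge 0$ --- a finite check in $N$ after one shows monotonicity in $\mu_k$.

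I expect the main obstacle to be bookkeeping the mode-by-mode quadratic form so that the Hardy--Rellich constant $c_N$ from \eqref{eq_HardyRellich} (which is $N^2/4$ only for $N\ge 5$ and drops to $3$ at $N=4$) is used with exactly the right weight to keep all coefficients non-negative down to $N=4$; the borderline arises precisely because at $N=4$ the naive weighted Hardy constant $(\frac{N-4}{2})^2$ degenerates to $0$, so one must instead invoke the sharp gradient Hardy--Rellich inequality \eqref{eq_HardyRellich} as a black box on $\nabla v$ rather than mode-by-mode. A cleaner route, which I would actually pursue to avoid the endpoint subtlety, is: first use \eqref{eq_HardyRellich} directly to write $\int(\Delta v)^2\ge (1-t)\int(\Delta v)^2+t\,c_N\int\frac{|\nabla v|^2}{r^2}$ for a parameter $t\in(0,1)$, then bound the remaining $(1-t)\int(\Delta v)^2$ from below using the Cauchy--Schwarz/integration-by-parts identity $\int(\Delta v)^2=\int|\nabla^2 v|^2\ge\int\big(\frac{(\partial_r v)^2}{r^2}(\text{angular part})\big)$-type inequalities, and finally optimize $t$ so that the coefficients of $\int\frac{(\partial_r v)^2}{r^2}$ and $\int\frac{|\nabla v|^2-(\partial_r v)^2}{r^2}$ come out to be $\frac{(N-2)^2}{4}$ and $\frac{N^2}{2}-2N$ respectively; the non-negativity of the latter for $N\ge 4$ is immediate, and of the former for all $N$. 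Either way the final inequality $F_{\eps,\eta}[\nabla v]\ge 0$ follows, with equality forcing $v$ radial as in the equality analysis of \eqref{eq_HardyRellich}.
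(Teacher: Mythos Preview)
Your strategy has a genuine gap at the very first step. The pointwise inequality $\frac{1}{\eps^2}W'(1-f^2-g^2)\le\frac{N-1}{r^2}$ is neither proved nor contained in the references you cite. First, your rearranged ODE has the wrong sign: from \eqref{Eq:20III21-fee} one gets $\frac{1}{\eps^2}W'(1-f^2-g^2)=\frac{N-1}{r^2}-\frac{f''+\frac{N-1}{r}f'}{f}$, so the claimed bound is equivalent to the radial subharmonicity $f''+\frac{N-1}{r}f'\ge 0$, and this does \emph{not} follow from $f>0$, $f'>0$, $f(0)=0$ alone. The papers \cite{INSZ_AnnENS, IN24-IHP} do not establish such a pointwise bound; they (and the present paper) instead use the Hardy decomposition of Lemma~\ref{Lem:HardyD} with the factor $f$ itself. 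Since $L_{\eps,\eta}f=-\frac{N-1}{r^2}f$, writing each test function as $f\cdot w$ yields the \emph{exact identity} $\int L_{\eps,\eta}(fw)\cdot(fw)=\int f^2\big(|\nabla w|^2-\frac{N-1}{r^2}w^2\big)$, which trades $W'$ for $\frac{N-1}{r^2}$ without any pointwise comparison. The paper applies this to $v_k'$ and to $v_k/r$ in each spherical-harmonic mode, and then performs a second Hardy decomposition with $\zeta=r^{-(N-2)/2}$ (here the hypothesis $f'>0$ is used as an inequality) to produce the constants $\frac{(N-2)^2}{4}$ and $\frac{N^2}{2}-2N$, uniformly down to $N=4$.

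Even granting your reduction to $L_*$, neither of your two routes recovers the stated lower bound. The mode-by-mode one-dimensional Hardy approach degenerates at $N=4$, as you acknowledge. The ``cleaner route'' via \eqref{eq_HardyRellich} gives only $\int(\Delta v)^2-\frac{N-1}{r^2}|\nabla v|^2\ge(c_N-(N-1))\int\frac{|\nabla v|^2}{r^2}$, which equals $\frac{(N-2)^2}{4}\int\frac{|\nabla v|^2}{r^2}$ for $N\ge 5$ and vanishes for $N=4$; both are strictly weaker than the proposition, since $\frac{N^2}{2}-2N>\frac{(N-2)^2}{4}$ for $N\ge 5$ and the $N=4$ lower bound $\int\frac{(\partial_r v)^2}{r^2}$ is strictly positive for $v\not\equiv 0$. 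The interpolation-in-$t$ sketch cannot repair this: the portion $(1-t)\int(\Delta v)^2$ has no further lower bound in terms of $\int\frac{(\partial_r v)^2}{r^2}$ and $\int\frac{|\nabla v|^2-(\partial_r v)^2}{r^2}$ beyond what the Hardy--Rellich inequality already provides, so optimizing $t$ still leaves the tangential coefficient short. The paper's double Hardy decomposition with $f$ and $\zeta$ is precisely what supplies the extra structure needed at $N=4$.
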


\begin{remark}
Note that for general $V \in H_0^1(B^N,\R^N)$ which is not necessarily a gradient field, it was shown in \cite{INSZ18_CRAS, INSZ_AnnENS}  in dimension $N \geq 7$ that $F_{\eps,\eta}[V] = F_\eps[V] \geq 0$. 
\end{remark}

Before giving the proof of the above proposition, let us prove Theorem \ref{thm1}.

\begin{proof}[First proof of Theorem \ref{thm1}]
Indeed, as $N\geq 4$, we have by Proposition \ref{pro} that $F_{\eps,\eta}[\nabla v]\geq 0$ for every $v\in H^2_0(B^N,\R)$ with equality if and only if $\partial_r v = 0$ a.e., which implies $v=0$. Therefore, by Lemmas \ref{lem_conservationgradu} and \ref{Lem:l1T}, $\Phi$ is a minimizer of our problem. If $\tilde \Phi$ is another minimizer of $E_{\eps,\eta}$, then $E_{\eps,\eta}[\tilde \Phi] = E_{\eps,\eta}[\Phi]$. By Lemmas \ref{lem_conservationgradu}, \ref{Lem:l1T} and Proposition \ref{pro}, this is possible only if $\tilde \Phi - \Phi = (0,h)$ for some $h$ in the first eigenspace of $T_{\eps,\eta}$, which is radially symmetric (because $T_{\eps,\eta}$ is radially symmetric). We thus have that $\tilde \Phi$ is a radially symmetric minimizer of $E_{\eps,\eta}$. \cite[Therorem 2.4]{IN24-IHP} then gives the desired uniqueness for minimizer(s).
\end{proof}

\begin{proof}[Proof of Proposition \ref{pro}]
It is enough to prove the estimate for $v\in\mathcal{C}^{\infty}_c(B^N\setminus\{0\},\R)$. The general case follows from Fatou's lemma and the density of $\mathcal{C}^{\infty}_c(B^N\setminus\{0\},\R)$ in $H^2_0(B^N, \R)$ (note $N \geq 4$).

We denote by $(\phi_k)_{k\in\N}$ an orthonormal basis of $L^2(\SN)$ given by eigenfunctions of the Laplace-Beltrami operator on the unit sphere, meaning that for any $k\in\N$ we have
\[-\Delta_{\SN}\phi_k=\lambda_k\phi_k\]
where $0=\lambda_0<N-1=\lambda_1=\hdots =\lambda_N<2N=\lambda_{N+1} \leq \cdots \longrightarrow +\infty$. In particular we have
\begin{equation}
\int_{\SN}\phi_k\phi_l \,d\sigma(\theta)=\delta_{kl} \quad \text{ and } \quad \int_{\SN}\nabla_{\SN}\phi_k\cdot\nabla_{\SN}\phi_l\, d\sigma(\theta)=\lambda_k\delta_{kl}.
	\label{Eq:OrRel}
\end{equation}
Consider the decomposition of $v$ in spherical harmonics: we write
\[
v(r\theta)=\sum_{k\geq 0}v_k(r)\phi_k(\theta)  \text{ for } r\in (0,1), \theta\in\SN
\]
where $v_k\in\mathcal{C}^\infty_c((0,1),\R)$. We have 
\[\nabla v=\sum_{k\geq 0}\left(v_k'\phi_k\frac{x}{r}+ \frac{1}{r}v_k\nabla_{\mathbb{S}^{N-1}}\phi_k\right),\ \Delta v=\sum_{k\geq 0}\left(v_k''+ \frac{N-1}{r} v_k'- \frac{\lambda_k}{r^{2}}v_k\right)\phi_k.
\]
Using the orthogonality relations \eqref{Eq:OrRel} and the identities
\begin{align*}
\int_0^1 r^{N-2} v_k'' v_k'\,dr &=  -\frac{N-2}{2} \int_0^1 r^{N-3} (v_k')^2\,dr,
\\
\int_0^1 r^{N-4} v_k' v_k\,dr &=  -\frac{N-4}{2} \int_0^1 r^{N-5} v_k^2\,dr
	\text{ for } k \geq 1, \\
\int_0^1 r^{N-3} v_k'' v_k\,dr &=  \int_0^1 \Big[-r^{N-3} (v_k')^2 + \frac{(N-3)(N-4)}{2}  r^{N-5} v_k^2\Big]\,dr\text{ for } k \geq 1,
\end{align*}
we get
\begin{equation}\label{eq_intDeltav}
\begin{split}
&\int_{B^N} (\Delta v)^2dx =\sum_{k\geq 0}\int_{B^N}\left(v_k''+ \frac{N-1}{r} v_k'- \frac{\lambda_k}{r^{2}}v_k\right)^2\phi_k^2dx\\
&\quad =\sum_{k\geq 0}\int_{0}^{1}\left(r^{N-1}(v_k'')^2+(N-1+2\lambda_k)r^{N-3}(v_k')^2+\lambda_k(\lambda_k+2N-8))r^{N-5}v_k^2\right)dr,
\end{split}
\end{equation}
and
\begin{align*}
\int_{B^N}W'(1-f^2-g^2)|\nabla v|^2dx&=\sum_{k\geq 0}\int_{0}^{1}W'(1-f^2-g^2)\left(r^{N-1}(v_k')^2+\lambda_k r^{N-3}v_k^2\right)dr.
\end{align*}

Inserting these into \eqref{Eq:Feedef}, we split $F_{\eps,\eta}$ into three terms as follows:
\begin{align*}
F_{\eps,\eta}[\nabla v]=&\sum_{k\geq 0}\Big\{\underbrace{\int_{0}^{1}r^{N-1}\left((v_k'')^2- \frac{1}{\eps^2}W'(1-f^2-g^2)(v_k')^2\right)\,dr}_{\text{I}_k}\\
	&\qquad+\underbrace{\int_0^1 \lambda_k r^{N-1}\left(r^{-2}(v_k')^2- \frac{1}{\eps^2}W'(1-f^2-g^2)r^{-2}v_k^2\right)\,dr}_{\text{II}_k}\\
&\qquad +\underbrace{\int_{0}^{1}\left((N-1+\lambda_k)r^{N-3}(v_k')^2+\lambda_k(\lambda_k+2N-8)r^{N-5}v_k^2\right)\,dr}_{\text{III}_k}\Big\}.
\end{align*}

For terms $\text{I}_k$ and $\text{II}_k$ we will apply the Hardy decomposition Lemma \ref{Lem:HardyD} using  
$$L_{\eps,\eta} f=-\frac{N-1}{r^2}f.$$ 
More precisely, for any function $w\in\mathcal{C}^{\infty}_c(B^N, \R)$ we have the identity
\begin{align}
\int_{B^N} L_{\eps,\eta}(fw)\cdot (fw)\, dx
	&=\int_{B^N}\big(f^2 |\nabla w|^2+w^2 L_{\eps,\eta}f\cdot f\big)\,dx\nonumber\\
	&=\int_{B^N}f^2\left(|\nabla w|^2-\frac{N-1}{r^2}w^2\right)\,dx.
	\label{eq_identityf}
\end{align}

\begin{itemize}
\item \textbf{Estimate of }$\text{I}_k$: For the first term we use the decomposition $v_k' = f \frac{v_k'}{f}$, i.e. $w = \frac{v_k'}{f} \in\mathcal{C}^{\infty}_c(B^N \setminus \{0\}, \R)$ in \eqref{eq_identityf}:
\[
\text{I}_k
	=\int_{0}^{1}r^{N-1}L_{\eps,\eta}(v_k') \cdot(v_k')dr
	=\int_{0}^{1}\Big[r^{N-1}f^2\left|\left(\frac{v_k'}{f}\right)'\right|^2-(N-1)r^{N-3}(v_k')^2\Big]dr.
\]

We let $\zeta(r)=r^{-\frac{N-2}{2}}$ so that, when seen as a radial function in $\R^N \setminus \{0\}$, $\zeta$ verifies 
\[
-\nabla\cdot (f^2 \nabla \zeta) = -f^2 \Delta \zeta - 2f f' \zeta' = \frac{(N-2)^2}{4r^2} f^2 \zeta - 2 f f' \zeta' \geq \frac{(N-2)^2}{4r^2} f^2 \zeta,
\]
since $\zeta' < 0$  and $f, f' > 0$ in $(0,1)$. By the Hardy decomposition Lemma \ref{Lem:HardyD}  for the operator $\nabla \cdot (f^2 \nabla)$ and the decomposition $\frac{v_k'}{f} = \zeta \frac{v_k'}{f \zeta }$, we thus have
\begin{equation}\label{eq_estI_k}
\begin{split}
\text{I}_k&\geq\int_{0}^{1}r^{N-1}\left(f^2\zeta^2\left|\left(\frac{v_k'}{f \zeta }\right)'\right|^2+\left(\frac{(N-2)^2}{4}-(N-1)\right)r^{N-3}(v_k')^2\right)dr.
\end{split}
\end{equation}

\item \textbf{Estimate of }$\text{II}_k$: First notice the elementary identity
\begin{align*}
\int_{0}^{1}r^{N-3}(v_k')^2 dr
	&=\int_{0}^{1}\left(r^{N-1}\Big(\left(\frac{v_k}{r}\right)'\Big)^2+2r^{N-4}v_kv_k'-r^{N-5}v_k^2\right)dr\\
	&=\int_0^1\left(r^{N-1}\Big(\left(\frac{v_k}{r}\right)'\Big)^2-(N-3)r^{N-5}v_k^2\right)dr
\end{align*}
so
\[
\text{II}_k=\lambda_k\int_{0}^{1}\left(r^{N-1}L_{\eps,\eta}(\frac{v_k}{r}) \cdot  (\frac{v_k}{r}) -(N-3)r^{N-5}v_k^2\right)dr.
\]
This time we use the decomposition $\frac{v_k}{r} = f\frac{v_k}{rf}$ (i.e. $w = \frac{v_k}{rf}$ in \eqref{eq_identityf}) to obtain
\[
\text{II}_k=\lambda_k\int_{0}^{1}\left(r^{N-1}f^2\left|\left(\frac{v_k}{rf}\right)'\right|^2-2(N-2)r^{N-5}v_k^2\right)dr.
\]

By the Hardy decomposition Lemma \ref{Lem:HardyD} for the operator $\nabla \cdot (f^2 \nabla)$ and the decomposition $\frac{v_k}{rf} = \zeta \frac{v_k}{r f \zeta}$ as above we get the estimate
\begin{equation}\label{eq_estII_k}
\begin{split}
\text{II}_k&\geq \lambda_k\int_{0}^{1}\left(r^{N-1}f^2\zeta^2\left|\left(\frac{v_k}{rf\zeta}\right)'\right|^2+\left(\frac{(N-2)^2}{4}-2(N-2)\right)r^{N-5}v_k^2\right)dr.
\end{split}
\end{equation}
\item \textbf{Estimate of }$\text{III}_k$: For the last term we simply apply the Hardy inequality once: for any $v\in\mathcal{C}^\infty_c((0,1),\R)$, $\int_{0}^{1}r^{N-3}(v')^2 dr \geq \frac{(N-4)^2}{4}\int_{0}^{1}r^{N-5}v^2dr$. This gives
\begin{equation}\label{eq_estIII_k}
\text{III}_k\geq \int_{0}^{1}\left((N-1)r^{N-3}(v_k')^2+\lambda_k\left(\lambda_k+2N-8+\frac{(N-4)^2}{4}\right)r^{N-5}v_k^2\right)dr.
\end{equation}
\end{itemize}
Summing the estimates \eqref{eq_estI_k}, \eqref{eq_estII_k}, \eqref{eq_estIII_k} we get
\begin{align*}
F_{\eps,\eta}[\nabla v]&\geq \sum_{k\geq 0}\int_{0}^{1}\left(\frac{(N-2)^2}{4}r^{N-3}(v_k')^2+\lambda_k\left(\frac{N^2}{2}-3N+1+\lambda_k\right)r^{N-5}v_k^2\right)dr\\
&\geq \sum_{k\geq 0}\int_{0}^{1}\left(\frac{(N-2)^2}{4}r^{N-3}(v_k')^2+\lambda_k\left(\frac{N^2}{2}-2N\right)r^{N-5}v_k^2\right)dr\text{ since }\lambda_k^2\geq (N-1)\lambda_k\\
&=\frac{(N-2)^2}{4}\int_{B^N}\frac{(\partial_r v)^2}{r^2}dx +\left(\frac{N^2}{2}-2N\right)\int_{B^N}\frac{|\nabla v|^2-(\partial_r v)^2}{r^2}dx.
\end{align*}
The result is proved.
\end{proof}

\subsection{Proof of Theorems \ref{thm:AG} and \ref{thm2}}

Theorem \ref{thm:AG} for the Aviles--Giga model is a simple consequence of Theorem \ref{thm1} for the extended model.

\begin{proof}[Proof of Theorem \ref{thm:AG}]
Fix $\eps > 0$. Pick any convex $C^2$ function $\tilde W: [0,\infty) \rightarrow [0,\infty)$ with $\tilde W(0) = 0$ and $\tilde W'(0) > 0$, e.g. $\tilde W(t) = t$. By \cite{IN24-IHP}, there exists a small $\eta > 0$ such that $E_{\eps,\eta}$ has no escaping radially symmetric critical points. By Theorem \ref{thm1}, $\bar U_\eps = (U_\eps,0)$ is the unique minimizer of $E_{\eps,\eta}$ in $\mcA$. It follows that
\[
E_\eps^{GL}[\nabla u] = E_{\eps,\eta}[(\nabla u,0)] \geq E_{\eps,\eta}[\bar U_\eps] = E_\eps[U_\eps] \text{ for all }  \nabla u \in \mcA^{GL}.
\]
This means that $U_\eps$ is a minimizer of $E_\eps^{GL}$ in $\{\nabla u \in \mcA^{GL}\}$. Conversely, if $\nabla \tilde u$ is a minimizer of $E_\eps^{GL}$ in $\{\nabla u \in \mcA^{GL}\}$, then
\[
E_{\eps,\eta}[(\nabla \tilde u,0)] = E_\eps^{GL}[\nabla \tilde u] = E_\eps^{GL}[U_\eps]  = E_{\eps,\eta}[\bar U_\eps],
\]
i.e. $(\nabla \tilde u,0)$ is also a minimizer of $E_{\eps,\eta}$ in $\mcA$. By Theorem \ref{thm1}, $\nabla \tilde u = U_\eps$ as desired.
\end{proof}

We next prove Theorem \ref{thm2} for the $\mathbb{S}^N$-valued Ginzburg--Landau model.

\begin{proof}[Proof of Theorem \ref{thm2}]
Set $W(t)=t^2$ and fix some $\eta>0$. As $4 \leq N \leq 6$ and $W'(1) > 0$, we know by \cite{IN24-IHP} that for $\eps>0$ small enough, there exists a unique escaping radially symmetric critical point of the form
$$
U_{\eps, \eta}=(f_{\eps,\eta}(r)\frac{x}{r},g_{\eps,\eta}(r)) \in \mcA, \quad g_{\eps,\eta}>0 \textrm{ in } (0,1)
$$
of the energy $E_{\eps, \eta}$. Pick an arbitrary $M=(\nabla m, M_{N+1})\in \mcA^{MM}$ (in particular, $|M|=1$) and set
$$(\nabla v_{\eps, \eta}, p_{\eps, \eta}):=M-U_{\eps, \eta}.$$
Then by Section \ref{sec2}, we know that
\begin{align*}
E^{MM}_\eta [M]&=E_{\eps, \eta}\big[U_{\eps, \eta}+(\nabla v_{\eps, \eta},p_{\eps, \eta})\big]\\
&\geq E_{\eps, \eta}[U_{\eps, \eta}]+\frac{1}{2}F_{\eps, \eta}[\nabla v_{\eps, \eta}]+\frac12 \int_{B^N} T_{\eps, \eta}p_{\eps, \eta}\cdot p_{\eps, \eta}\, dx
\end{align*}
with
\begin{align*}
F_{\eps, \eta}[\nabla v_{\eps, \eta}]
	&\geq \frac{(N-2)^2}{4}\int_{B^N}\frac{(\partial_r v_{\eps, \eta})^2}{r^2}\,dx +\left(\frac{N^2}{2}-2N\right)\int_{B^N}\frac{|\nabla_{\SN} v_{\eps, \eta}|^2}{r^4}\,dx,\\
\int_{B^N} T_{\eps, \eta}p_{\eps, \eta}\cdot p_{\eps, \eta}\, dx
	&\geq \int_{B^N} g^2_{\eps,\eta} \Big|\nabla \big(\frac{p_{\eps,\eta}}{g_{\eps,\eta}}\big)\Big|^2\,dx.
\end{align*}
By \cite[Remark 2.17]{IN24-IHP}, for a subsequence $\eps\to 0$, we have that $U_{\eps,\eta}\to M_{\eta}^+$ in $H^1(B^N)$ (in particular,
$\nabla U_{\eps,\eta}\to \nabla M_{\eta}^+$ and $U_{\eps,\eta}\to M_{\eta}^+$ a.e. in $B^N$) and $E_{\eps, \eta}(U_{\eps, \eta})\to E^{MM}_\eta [M_\eta^+]$ where $M_\eta^+ = (\tilde f_\eta \frac{x}{r}, g_\eta)$ is the unique escaping radially symmetric critical point of $E^{MM}_\eta$ with $g_\eta > 0$ in $(0,1)$. Therefore, $$(\nabla v_{\eps, \eta}, p_{\eps, \eta})\to M-M_\eta^+ =:(\nabla \tilde v_{\eta}, \tilde p_{\eta})$$ in $H^1(B^N)$ and a.e. in $B^N$ as well as $\nabla(\nabla v_{\eps, \eta}, p_{\eps, \eta})\to \nabla (\nabla \tilde v_{\eta}, \tilde p_{\eta})$ a.e. in $B^N$  for a subsequence $\eps\to 0$. By Fatou's lemma, it follows for a subsequence $\eps\to 0$:
\begin{align*}
E^{MM}_\eta [M]
	&=E^{MM}_\eta [M_\eta^++(\nabla \tilde v_{\eta}, \tilde p_{\eta})]\\
	&\geq E^{MM}_{\eta}[M_{\eta}^+]+\frac12 \int_{B^N} g^2_{\eta} \Big|\nabla\big( \frac{\tilde p_{\eta}}{g_{\eta}}\big)\Big|^2\, dx\\
		&\quad + \frac{(N-2)^2}{8}\int_{B^N}\frac{(\partial_r \tilde v_{\eta})^2}{r^2}+\frac12\left(\frac{N^2}{2}-2N\right)\int_{B^N}\frac{|\nabla_{\SN} \tilde v_{\eta}|^2}{r^4}.
\end{align*}
We conclude to the minimality of $M_\eta^+$. If $M$ is another minimizer, within the above notations, then $E_\eta^{MM}[M] = E_\eta^{MM}[M_\eta^+]$ and so $\partial_r \tilde v_{\eta}=0$ in $B^N$ yielding $\tilde v_{\eta}=0$ (as $\tilde v_{\eta}=0$ on $\partial B^N$);  also, $\tilde p_{\eta}=\alpha g_\eta$ for some constant $\alpha\in \R$. Since $|M|=1$ and $M=(0,\tilde p_\eta)+M_\eta$, we deduce that $(\tilde p_{\eta}+g_\eta)^2=g_\eta^2$ yielding $\alpha=0$ or $-2$, i.e. $M = M_\eta^+$ or $M = M_\eta^{-}$.
\end{proof}

\section{Symmetrization and second proof of main results in dimension $N \geq 5$}

\subsection{A symmetrization of scalar functions}\label{ssec:4.1}

In this section, we consider a spherical average rearrangement which is probably known to the experts. See e.g. \cite[Chapter 1, Section 9]{Treves75-Book} for a similar rearrangement in the context of the Laplace operator. Let $1 \leq q < \infty$. For a function $g\in L^q(B^N, \R)$, define a radial symmetrization $\check g$ of $g$ by
\begin{equation}
\check g(r) = \Big\{\fint_{\mathbb{S}^{N-1}} |g(r\theta)|^q\,d\sigma(\theta) \Big\}^{1/q} \geq 0, \quad r\in (0,1).
	\label{Eq:g-sym}
\end{equation}
When $q = 2$, we can also think of this as a rearrangement in the spherical harmonic decomposition of $g$.

\begin{theorem}\label{Thm:scRe}
Let $N\geq 2$, $1 \leq q < \infty$, $g \in L^q(B^N,\RR)$ and $\check g$ be associated to $g$ by \eqref{Eq:g-sym}. We have the following conclusions.

\begin{enumerate}[label = (\roman*)]

\item The map $g \mapsto \check g$ is a $1$-Lipschitz continuous map from $L^q(B^N,\RR)$ into itself:
\[
\|\check g - \check h\|_{L^q(B^N,\RR)} \leq \|g - h\|_{L^q(B^N,\RR)}.
\]
Moreover, $\int_{\mathbb{S}^{N-1}} |\check g(r\theta)|^q\,d\sigma(\theta) = \int_{\mathbb{S}^{N-1}} |g(r\theta)|^q\,d\sigma(\theta)$ for a.e. $r \in (0,1)$.

\item Let $G: [0,\infty) \times [0,\infty) \rightarrow [0,\infty)$ be continuous. If $G$ is convex in the second variable, then
\begin{align*}
\int_{B^N} G(r,|\check g(x)|^q)\,dx \leq \int_{B^N} G(r, |g(x)|^q)\,dx.
\end{align*}
In particular, for any $q < p < \infty$,
\[
\int_{B^N} |\check g|^p\,dx \leq \int_{B^N} |g|^p\,dx.
\]

\item Assume in addition that $g\in W^{1,q}(B^N,\R)$. Then $\check g \in W^{1,q}(B^N,\R)$ and
\begin{equation}\label{Eq:scReI}
\int_{B^N} |\nabla\check g|^q\,dx \leq \int_{B^N}|\nabla g|^q\,dx.
\end{equation}
Equality is attained if and only if $g$ is radially symmetric and $|g| = \check g$ in $(0,1)$.
\end{enumerate}

\end{theorem}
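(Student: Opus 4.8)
The plan is to treat the three parts in order, each reducing to a pointwise-in-$r$ statement on the sphere followed by integration in $r$. For part (i), I would first observe that for fixed $r$, the map sending $f \in L^q(\SN)$ to $\big(\fint_{\SN}|f|^q\big)^{1/q}$ is precisely the $L^q$ norm (up to the normalization constant $|\SN|^{-1/q}$), so by the triangle inequality in $L^q(\SN,d\sigma)$ one gets $|\check g(r) - \check h(r)| \leq |\SN|^{-1/q}\|g(r\cdot) - h(r\cdot)\|_{L^q(\SN)}$; raising to the $q$-th power, multiplying by $r^{N-1}|\SN|$ and integrating in $r$ from $0$ to $1$ yields the claimed $1$-Lipschitz bound after recognizing the polar-coordinate form of $\|g-h\|_{L^q(B^N)}^q$. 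The identity $\int_{\SN}|\check g(r\theta)|^q d\sigma(\theta) = |\SN|\,\check g(r)^q = \int_{\SN}|g(r\theta)|^q d\sigma(\theta)$ is immediate from the definition \eqref{Eq:g-sym}. The only mild care needed is measurability of $r \mapsto \check g(r)$, which follows from Fubini/Tonelli applied to $|g|^q \in L^1(B^N)$.

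For part (ii), the key point is Jensen's inequality on the probability space $(\SN, \frac{d\sigma}{|\SN|})$. Since $\check g(r)^q = \fint_{\SN}|g(r\theta)|^q d\sigma(\theta)$ and $G(r,\cdot)$ is convex, Jensen gives
\[
G\big(r, \check g(r)^q\big) = G\Big(r, \fint_{\SN} |g(r\theta)|^q\,d\sigma(\theta)\Big) \leq \fint_{\SN} G\big(r, |g(r\theta)|^q\big)\,d\sigma(\theta).
\]
Multiplying by $r^{N-1}|\SN|$ (so that $\check g(r)$, a radial function, integrates correctly) and integrating in $r$ gives $\int_{B^N} G(r,|\check g|^q)\,dx \leq \int_{B^N} G(r,|g|^q)\,dx$. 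The special case $G(r,t) = t^{p/q}$ with $p/q > 1$ (so $G$ is convex in $t$) yields the $L^p$ bound. Positivity and continuity of $G$ are only used to make the integrals well-defined in $[0,\infty]$.

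Part (iii) is where the real work lies. The natural route is to differentiate $\check g$ and estimate. Writing $r \mapsto \check g(r)$, away from the zero set one formally has $\check g(r)^{q-1}\check g'(r) = \fint_{\SN} |g(r\theta)|^{q-2} g(r\theta)\,\partial_r g(r\theta)\,d\sigma(\theta)$, whence by Hölder's inequality on $\SN$ with exponents $\frac{q}{q-1}$ and $q$, $\check g(r)^{q-1}|\check g'(r)| \leq \check g(r)^{q-1}\big(\fint_{\SN} |\partial_r g(r\theta)|^q\big)^{1/q}$, i.e. $|\check g'(r)| \leq \big(\fint_{\SN}|\partial_r g(r\theta)|^q\big)^{1/q} \leq \big(\fint_{\SN}|\nabla g(r\theta)|^q\big)^{1/q}$, where the last step drops the tangential part of the gradient. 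Since $|\nabla \check g(x)| = |\check g'(r)|$ for the radial function $\check g$, raising to the $q$-th power, multiplying by $r^{N-1}|\SN|$ and integrating gives \eqref{Eq:scReI}. Equality forces, at a.e. $r$, both (a) equality in dropping the tangential gradient, so $\nabla_{\SN} g(r\cdot) \equiv 0$, meaning $g(r\cdot)$ is constant on $\SN$; and (b) equality in Hölder, forcing $\partial_r g(r\theta)$ to be (a.e.) of constant sign times $|g(r\theta)|^{?}$ — combined with (a) this gives $g$ radial with $|g'| = \check g'$. The main obstacle is making the formal differentiation of $\check g$ rigorous, since $\check g^q$ is only known a priori to lie in $W^{1,1}$ (as $|g|^q \in W^{1,1}$ when $g \in W^{1,q}$, with $\nabla|g|^q = q|g|^{q-2}g\nabla g$), and one must handle the set $\{\check g = 0\}$ and the non-differentiability of $t \mapsto |t|^q$ at the origin when $1 \le q < 2$. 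The clean way around this is to avoid pointwise differentiation entirely: for $\varphi \in C_c^\infty((0,1))$ radial, integrate by parts to express $\int_{B^N} \check g \,\partial_r \varphi$ via $\int_0^1 \check g(r)\varphi'(r) r^{N-1}\,dr$, approximate $\check g$ by $\check{g_\delta}$ where $g_\delta$ is a mollification (using part (i) for convergence), and pass to the limit; alternatively, approximate $|t|^q$ by $(t^2+\delta)^{q/2}$, carry out the above estimates for the smooth functional $\check g_\delta(r) := \big(\fint_{\SN}(|g(r\theta)|^2+\delta)^{q/2}d\sigma\big)^{1/q}$ — which is genuinely differentiable in $r$ with a controlled derivative — derive \eqref{Eq:scReI} with an extra $O(\delta)$ error, and let $\delta \to 0$ using monotone/dominated convergence. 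I expect the bookkeeping for the equality case, especially tracking the two separate equality conditions through the approximation, to be the most delicate part; for $1 \le q < 2$ one should argue directly on $\{g \neq 0\}$ and note $\{\check g = 0\} = \{g(r\cdot) \equiv 0\}$ contributes nothing to either side.
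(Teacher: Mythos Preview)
Your proposal is correct and follows essentially the same route as the paper. Parts (i) and (ii) are identical (reverse triangle inequality in $L^q(\SN)$, then Jensen), and for part (iii) the paper does precisely your second suggested regularization $\check g_\mu(r) = \big(\fint_{\SN}(g(r\theta)^2+\mu)^{q/2}\,d\sigma\big)^{1/q}$, but with one extra layer: it first reduces to $g \in \mathcal{C}^\infty(\bar B^N)$ by density, carries out the H\"older estimate $|\check g_\mu'(r)|^q \le \fint_{\SN}|\partial_r g|^q$ for smooth $g$, passes $\mu \to 0$ by weak $W^{1,q}$ compactness, and only then passes from smooth $g$ to general $g \in W^{1,q}$ via another weak-limit argument. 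This two-step approximation avoids having to justify the chain rule for $r \mapsto \check g_\mu(r)$ when $g$ is merely Sobolev. For the equality case the paper simply observes that the chain of inequalities is $\int|\nabla\check g|^q \le \int|\partial_r g|^q \le \int|\nabla g|^q$, so equality forces $|\nabla g| = |\partial_r g|$ a.e., i.e.\ $g$ radial, after which $|g| = \check g$ is automatic; your condition (b) on H\"older equality is then redundant rather than wrong.
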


\begin{proof} \underline{Proof of (i):} From the definition of the radial function $\check{g}(x) = \check g(r)$ we have 
\[
\int_{\mathbb{S}^{N-1}}|\check g(r\theta)|^q\,d\sigma(\theta)=\int_{\mathbb{S}^{N-1}} |g(r\theta)|^qd\sigma(\theta) \text{ for a.e. }r\in (0,1),
\]
which implies $\check g \in L^q(B^N)$. Also, by the reverse triangle inequality, we have for $g, h \in L^q(B^N)$ that
\begin{align*}
\|\check g - \check h\|_{L^q(B^N)}^q
	&= |\SN| \int_0^1 |\check g(r) - \check h(r)|^q\,r^{N-1}\,dr \\
	&=  \int_0^1  \big|\|g(r\cdot)\|_{L^q(\SN)} - \|h(r\cdot)\|_{L^q(\SN)}\big|^q\,r^{N-1}\,dr\\
	&\leq \int_0^1  \|g(r\cdot) - h(r\cdot)\|_{L^q(\SN)}^q\,r^{N-1}\,dr
		= \|g - h\|_{L^q(B^N)}^q.
\end{align*}
Therefore $g \mapsto \check g$ is a $1$-Lipschitz continuous map on $L^q(B^N)$.

\medskip
\noindent \underline{Proof of (ii):} By Jensen inequality,
\begin{multline*}
\fint_{\mathbb{S}^{N-1}}G(r, |\check{g}(r\theta)|^q)\,d\sigma(\theta)
	=G\left(r,\fint_{\mathbb{S}^{N-1}}|\check{g}(r\theta)|^q\,d\sigma(\theta)\right)\\
	=G\left(r,\fint_{\mathbb{S}^{N-1}}|g(r\theta)|^q\,d\sigma(\theta)\right)
	\leq \fint_{\mathbb{S}^{N-1}} G(r,|g(r\theta)|^q)\,d\sigma(\theta).
\end{multline*}
Integrating in $r$ gives the second bullet point. In particular, with $G(r,s) = s^{p/q}$ with $p > q$, we see that the $L^p$-norm of $\check g$ is no more than that of $g$.

\medskip
\noindent \underline{Proof of (iii):} Consider first the case $g$ belongs to $\mathcal{C}^\infty(\bar B^N)$, which is a dense subset of $W^{1,q}(B^N)$. For technical reasons, we introduce, for $\mu > 0$,
\[
\check g_\mu(r) = \Big\{\fint_{\mathbb{S}^{N-1}} (g(r\theta)^2 + \mu)^{q/2}\,d\sigma(\theta) \Big\}^{1/q} \geq \mu^{1/2}, \quad r\in (0,1).
\]
Note that $\check g_\mu \rightarrow \check g$ in $L^q(B^N)$ as $\mu \rightarrow 0$. We have, by H\"older's inequality,
\begin{align*}
 |\check g_\mu(r)|^{q-1} |\check g_\mu'(r)| 
 	&\leq  \fint_{\mathbb{S}^{n-1}} (g(r\theta)^2 + \mu)^{(q-1)/2}|\partial_r g(r\theta)|\,d\sigma(\theta)\\
	& \leq |\check g_\mu(r)|^{q-1} \Big\{\fint_{\mathbb{S}^{n-1}}  |\partial_r g(r\theta)|^q\,d\sigma(\theta)\Big\}^{1/q}.
\end{align*}
As $\check g_\mu \geq \mu^{1/2} > 0$, this implies
\[
\fint_{\mathbb{S}^{N-1}} |\nabla \check g_\mu(r\theta)|^q\,d\sigma(\theta) = |\check g_\mu'(r)|^q \leq \fint_{\mathbb{S}^{N-1}} |\partial_r g(r\theta)|^q\,d\sigma(\theta).
\]
Integrating over $r \in (0,1)$ gives 
\[
\int_{B^N} |\nabla \check g_\mu|^q\,dx\leq \int_{B^N} |\partial_r g|^qdx.
\]
This implies $\check g_\mu$ is bounded in $W^{1,q}(B^N)$ and hence converges weakly to $\check g$ in $W^{1,q}(B^N)$ as $\mu \rightarrow 0$. Hence
\begin{equation}
\int_{B^N} |\nabla \check g|^q\,dx\leq \int_{B^N} |\partial_r g|^qdx,
	\label{Eq:scPR}
\end{equation}
which proves \eqref{Eq:scReI} for $g \in \mathcal{C}^\infty(\bar B^N)$.

Suppose now $g \in W^{1,q}(B^N)$. Pick $\{g_{(j)}\}\subset \mathcal{C}^\infty(\bar B^N)$ such that $g_{(j)} \rightarrow g$ in $W^{1,q}(B^N)$. By (i), $\check g_{(j)} \rightarrow \check g$ in $L^q(B^N)$. Also, by \eqref{Eq:scPR},
\begin{equation}
\int_{B^N} |\nabla \check g_{(j)}|^q\,dx\leq \int_{B^N} |\partial_r g_{(j)}|^qdx.
	\label{Eq:scPRm}
\end{equation}
This implies that $\check g_{(j)}$ is bounded in $W^{1,q}(B^N)$ and hence converges weakly in $W^{1,q}(B^N)$ to $\check g$. Sending $j \rightarrow \infty$ we see that \eqref{Eq:scPR} remains valid for $g \in W^{1,q}(B^N)$, which proves \eqref{Eq:scReI}. Moreover, equality holds in \eqref{Eq:scReI} if and only if $|\nabla g| = |\partial_r g|$ a.e., i.e. $g$ is radially symmetric.
\end{proof}

\subsection{A symmetrization of gradient fields and proof of Theorem \ref{main_rearrangement}}\label{ssec:3.2}

Recall the symmetrization $\check v$ for a function $v\in H^1(B^N, \R)$ is given by the  formula \eqref{v-rad}:
\[
\check v(r) = -\int_r^1 \Big\{\fint_{\mathbb{S}^{N-1}} |\nabla v(s\theta)|^2d\sigma(\theta) \Big\}^{1/2}\,ds \leq 0, \quad r\in (0,1).
\]

We will use the following density result.
\begin{lemma}\label{Lem:den}
For $N \geq 2$, the set $\mathcal{S}$ of functions in $\mathcal{C}^\infty(\bar B^N)$ which are constant in a neighborhood of the origin is dense in $H^2(B^N)$. Moreover, if $v\in H^2(B^N)$ verifies $\int_{\SN}v(r\theta)\theta d\sigma(\theta)=0$ for almost every $r\in (0,1)$, then its approximation sequence in $\mathcal{S}$ may be chosen with the same property.
\end{lemma}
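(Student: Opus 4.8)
The plan is to first reduce to smooth functions, then to kill their variation near the origin by a logarithmic cut-off, and finally, for the second assertion, to restore the constraint $\int_{\SN}v(r\theta)\theta\,d\sigma(\theta)=0$ by projecting away its degree-one spherical-harmonic component; below, $B_\rho$ denotes the ball of radius $\rho$ about the origin. Since $\mathcal{C}^\infty(\overline{B^N})$ is dense in $H^2(B^N)$, it suffices to approximate a given $v\in\mathcal{C}^\infty(\overline{B^N})$ in $H^2(B^N)$ by elements of $\mathcal{S}$. For $\delta\in(0,1/e)$ I would take a radial cut-off $\eta_\delta\in\mathcal{C}^\infty(\R^N)$ with $\eta_\delta\equiv 1$ on $B_{\delta^2}$, $\eta_\delta\equiv 0$ outside $B_\delta$, $0\le\eta_\delta\le1$, and the logarithmic bounds $|\nabla\eta_\delta(x)|\le \tfrac{C}{|\log\delta|\,|x|}$, $|\nabla^2\eta_\delta(x)|\le \tfrac{C}{|\log\delta|\,|x|^2}$ on $B_\delta\setminus B_{\delta^2}$ (obtained from $\eta_\delta(x)=\chi(\tfrac{\log(\delta/|x|)}{\log(1/\delta)})$ for a fixed smooth $\chi:\R\to[0,1]$ that is $0$ on $(-\infty,0]$ and $1$ on $[1,\infty)$), and set $v_\delta:=(1-\eta_\delta)v+\eta_\delta v(0)\in\mathcal{S}$, so that $v_\delta=v(0)$ on $B_{\delta^2}$, $v_\delta=v$ outside $B_\delta$, and $v-v_\delta=\eta_\delta(v-v(0))$. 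On $B_{\delta^2}$ this difference equals $v-v(0)$, whose $H^2$-norm over $B_{\delta^2}$ tends to $0$; on the annulus $A_\delta=B_\delta\setminus B_{\delta^2}$, smoothness of $v$ gives $|v-v(0)|\le C|x|$ and $|\nabla v|+|\nabla^2 v|\le C$, so in the Leibniz expansion of $\nabla^2[\eta_\delta(v-v(0))]$ every term except $(\nabla^2\eta_\delta)(v-v(0))$ and $\nabla\eta_\delta\otimes\nabla v$ is bounded by $C|A_\delta|^{1/2}\to0$, while
\[
\int_{A_\delta}\Big(|\nabla^2\eta_\delta|^2\,|v-v(0)|^2+|\nabla\eta_\delta|^2\,|\nabla v|^2\Big)\,dx\le\frac{C}{|\log\delta|^2}\int_{\delta^2}^{\delta}r^{N-3}\,dr\;\longrightarrow\;0\quad(\delta\to0),
\]
as the last integral is $\le\tfrac1{N-2}$ for $N\ge3$ and equals $|\log\delta|$ for $N=2$; the remaining (lower-order) contributions to $\|v-v_\delta\|_{H^2}$ are estimated in the same way. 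Hence $v_\delta\to v$ in $H^2(B^N)$, which proves the first assertion.

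For the second assertion, let $P_1$ be the $L^2(B^N)$-orthogonal projection onto the degree-one spherical-harmonic modes; then $P_1 v=0$ is precisely the stated integral condition. I will use that: (a) $P_1$ is bounded on $H^2(B^N)$ — indeed $P_1$ commutes with $\Delta$ and with the traces $v\mapsto v|_{\partial B^N}$ and $v\mapsto \partial_r v|_{\partial B^N}$ (on which it acts as a spectral projection of the Laplace--Beltrami operator, of norm $\le1$), so boundedness follows from the elliptic estimate $\|v\|_{H^2(B^N)}\lesssim \|v\|_{L^2}+\|\Delta v\|_{L^2}+\|v|_{\partial B^N}\|_{H^{3/2}(\partial B^N)}+\|\partial_r v|_{\partial B^N}\|_{H^{1/2}(\partial B^N)}$; (b) $P_1$ maps $\mathcal{C}^\infty(\overline{B^N})$ into itself, since for $w\in\mathcal{C}^\infty(\overline{B^N})$ the $\R^N$-valued function $r\mapsto b_w(r):=\int_{\SN}w(r\theta)\theta\,d\sigma(\theta)$ is smooth and odd on $[-1,1]$, so $b_w(r)=r\,\gamma_w(r^2)$ for a smooth $\R^N$-valued $\gamma_w$ and $P_1 w(x)=\tfrac{N}{|\SN|}\,\gamma_w(|x|^2)\cdot x\in\mathcal{C}^\infty(\overline{B^N})$; and (c) $P_1$ commutes with multiplication by radial functions and annihilates constants, so that the operation $v\mapsto v_\delta=(1-\eta_\delta)v+\eta_\delta v(0)$ of the previous paragraph (with $\eta_\delta$ radial) preserves the condition $P_1 v=0$.

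Finally, given $v\in H^2(B^N)$ with $P_1 v=0$, I would pick $w^{(j)}\in\mathcal{C}^\infty(\overline{B^N})$ with $w^{(j)}\to v$ in $H^2(B^N)$, set $v^{(j)}:=(\mathrm{Id}-P_1)w^{(j)}$, which by (a)--(b) lies in $\mathcal{C}^\infty(\overline{B^N})$, satisfies $P_1 v^{(j)}=0$, and converges to $(\mathrm{Id}-P_1)v=v$ in $H^2(B^N)$; then apply the cut-off of the first paragraph to each $v^{(j)}$ and invoke (c) to obtain $v^{(j)}_\delta\in\mathcal{S}$ with $P_1 v^{(j)}_\delta=0$ and $v^{(j)}_\delta\to v^{(j)}$ in $H^2(B^N)$ as $\delta\to0$, whence a diagonal choice $\delta=\delta_j\to0$ produces the desired sequence. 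The genuinely delicate point is the case $N=2$: there a point has positive $W^{2,2}$-capacity, so a cut-off concentrated at a single scale $\delta$ would leave an $O(1)$ error in $\|\nabla^2(v-v_\delta)\|_{L^2}$, and the logarithmic cut-off is exactly what reduces this error to $O(|\log\delta|^{-1/2})$; verifying that $P_1$ preserves $\mathcal{C}^\infty(\overline{B^N})$ and is bounded on $H^2(B^N)$ is routine.
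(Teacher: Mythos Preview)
Your proof is correct and follows essentially the same approach as the paper's: approximate by smooth functions, flatten near the origin via a cut-off, and handle the constraint by projecting off the degree-one spherical-harmonic component. The only cosmetic differences are that you use a single logarithmic cut-off uniformly in $N\geq 2$ (the paper uses a single-scale cut-off $\varphi(j|x|)$ for $N\geq 3$ and a log--log cut-off for $N=2$), and you project before applying the cut-off, invoking that a radial cut-off commutes with $P_1$, whereas the paper cuts first and then observes that the degree-one coefficients of the cut approximation are supported away from $0$, so subtracting them keeps the function in $\mathcal{S}$; both orderings work equally well.
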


\begin{proof}
It is well known that $\mathcal{C}^\infty(\bar B^N)$ is dense in $H^2(B^N)$. Thus, to show that $\mathcal{S}$ is dense in $H^2(B^N)$, we only need to show that a given $v \in \mathcal{C}^\infty(\bar B^N)$ can be approximated by a sequence of functions in $\mathcal{S}$. In the proof, $C$ denotes a constant that can change between lines but depends only on the dimension $N$. Pick a cut-off function $\varphi \in \mathcal{C}^\infty(\RR)$ with $\varphi \equiv 1$ in $(-\infty,1/2]$, $\varphi \equiv 0$ in $[1,\infty)$. For $j \geq 10$ and $x \in B^N$, let
\[
\varphi_{(j)}(x) = \begin{cases}
		\varphi(j|x|) & \text{ if } N \geq 3,\\
		1 - \varphi(\frac{\ln \ln \frac{1}{|x|}}{2\ln\ln j}) & \text{ if } N = 2.
	\end{cases}
\]
Note that $\varphi_{(j)}(x) = 0$ for $|x| \geq \frac{1}{j}$ and $\varphi_{(j)}(x) = 1$ when $|x|$ is small enough. Define
\[
v_{(j)} = v(0) \varphi_{(j)} + v (1 - \varphi_{(j)}) = v - (v - v_0) \varphi_{(j)} \in \mathcal{S}, \quad j \geq 1.
\]

We estimate
\begin{align*}
&|v(x) - v(0)| \leq \|\nabla v\|_{L^\infty(B^N)} |x|,\\
&\|\varphi_{(j)}\|_{L^2(B^N)} \leq Cj^{-N/2},\\
&\|\nabla \varphi_{(j)}\|_{L^2(B^N)} + \|r\nabla^2 \varphi_{(j)}\|_{L^2(B^N)}
	\leq C \omega_N(j) \text{ with } \omega_N(j) = \begin{cases}
	 Cj^{-(N-2)/2} &\text{ if } N \geq 3,\\
	 \frac{C}{(\ln j \ln \ln j)^{1/2}} & \text{ if } N = 2.
	 \end{cases}
\end{align*}
We thus have
\begin{align*}
\|(v - v(0))\varphi_{(j)}\|_{L^2(B^N)} 
	&\leq  Cj^{-N/2}\|v\|_{L^\infty(B^N)} ,\\
\|\nabla[(v - v(0))\varphi_{(j)}]\|_{L^2(B^N)} 
	&\leq  Cj^{-N/2}\|\nabla v\|_{L^\infty(B^N)} + C\omega_N(j)\|v\|_{L^\infty(B^N)},\\
\|\nabla^2[(v - v(0))\varphi_{(j)}]\|_{L^2(B^N)} 
	&\leq  Cj^{-N/2}\|\nabla^2 v\|_{L^\infty(B^N)} +  C\omega_N(j)\|\nabla v\|_{L^\infty(B^N)}.
\end{align*}
Clearly, these estimates imply that $v_{(j)} \rightarrow v$ in $H^2(B^N)$.  We have proved that $\mathcal{S}$ is dense in $H^2(B^N)$

Now suppose $v \in H^2(B^N)$ and $\int_{\SN}v(r\theta)\theta d\sigma(\theta)=0$. Let $v_{(j)} \in \mathcal{S}$ be such that $v_{(j)} \rightarrow v$ in $H^2(B^N)$. Define $\tilde{v}_{(j)}(r\theta)=v_{(j)}(r\theta)-\sum_{k=1}^{N}v_{(j),k}(r)\phi_k(\theta)$ where $v_{(j),k}(r)=\int_{\SN}v_{(j)}(r\theta)\phi_k(\theta)d\sigma(\theta)$. It is clear that $\int_{\SN}\tilde v_{(j)}(r\theta)\theta d\sigma(\theta)=0$. Since $v_{(j)}$ is constant near $0$, $v_{(j),k}$ is supported away from $0$, and so $\tilde v_{(j)} \in \mathcal{S}$. Finally, since the map $w\in H^2(B^N)\mapsto (r\theta\mapsto w_k(r)\phi_k(\theta))$ is continuous in $H^2(B^N)$ and $v_k\equiv 0$ for $k=1,\hdots, N$ we have 
\[
\lim_{j \rightarrow \infty} \|\tilde v_{(j)} - v\|_{H^2(B^N)} \leq \lim_{j\rightarrow \infty} \|v_{(j)} - v\|_{H^2(B^N)} + \lim_{j\rightarrow \infty}  \Big\Vert \sum_{k=1}^{N}v_{(j),k}(r)\phi_k(\theta)\Big\Vert_{H^2(B^N)} = 0.
\]
The proof is complete.
\end{proof}

\begin{proof}[Proof of Theorem \ref{main_rearrangement}]
\underline{Proof of (i):} By Cauchy-Schwarz' inequality,
\begin{align*}
\check v(r)^2 
	&= \Big\{\int_r^1 \Big[\fint_{\mathbb{S}^{N-1}} |\nabla v(s\theta)|^2d\sigma(\theta) \Big]^{1/2}\,ds\Big\}^2\\
	&\leq  \Big\{\int_r^1 s^{1-N}ds\Big\} \Big\{\int_r^1 s^{N-1} \fint_{\mathbb{S}^{N-1}} |\nabla v(s\theta)|^2d\sigma(\theta) \,ds\Big\}.
\end{align*}
Hence $\check v(r)$ is well-defined and finite in $(0,1)$; in fact, $|\check v(r)| \leq C_N r^{-\frac{N-2}{2}} \|\nabla v\|_{L^2(B^N)}$  for $N\geq 3$ (resp. $|\check v(r)|\leq C\sqrt{\log(1/r)}\|\nabla v\|_{L^2(B^2)}$ when $N=2$). In particular, $\check v \in L^2(B^N)$. Moreover, by the definition of $\check{v}$ we have $\int_{\mathbb{S}^{N-1}}|\nabla \check v(r,\theta)|^2\,d\sigma(\theta)=\int_{\mathbb{S}^{N-1}}|\nabla v(r\theta)|^2d\sigma(\theta)$ for a.e. $r\in (0,1)$. As $\check v(1) = 0$, these imply that $\check v \in H_0^1(B^N)$.

As in the proof of (i) in Theorem \ref{Thm:scRe}, the map $\nabla v \mapsto \nabla \check v$ is a $1$-Lipschitz continuous map from $L^2(B^N,\RR^N)$ into itself. By Poincar\'e's inequality, the map $v \mapsto \check v$ is a Lipschitz continuous map from $H^1(B^N)$ into $H_0^1(B^N)$.

\medskip
\noindent \underline{Proof of (ii):} This is similar to that in the proof of Theorem \ref{Thm:scRe} and is omitted.

\medskip
\noindent \underline{Proof of (iii):} By density and (i), it suffices to consider $v \in \mathcal{C}_c^\infty(B^N)$.

Let $A(r) = \fint_{\SN}  |v(r\theta)|^p\,d\sigma(\theta)$.
We have, by H\"older's inequality
\begin{align*}
|A'(r)| 
	&\leq p\fint_{\SN} |v(r\theta)|^{p-1} |\partial_r v(r\theta)|\,d\sigma \leq p\Big\{\fint_{\SN} |v(r\theta)|^{2(p-1)} \,d\sigma\Big\}^{1/2} \check v'(r)\\
	& \leq p\,A(r)^{\frac{p-1}{p}}\,\check v'(r),
\end{align*}
where we have used $2(p-1) \leq p$ when $1 \leq p \leq 2$. 

Fix some $\mu > 0$. Then  $|\frac{d}{dr} (\mu + A(r))^{1/p}| \leq \check v'(r)$. This together with $A(1) = 0 $ (since $v = 0$ on $\partial B^N)$ implies
\[
(\mu + A(r))^{1/p} \leq \mu^{1/p} + \int_r^1 \check v'(r)\,dr = \mu^{1/p} - \check v(r) = \mu^{1/p} + |\check v(r)|.
\]
Sending $\mu \rightarrow 0$, we get the conclusion.

\medskip
\noindent \underline{Proof of (iv):} Without loss of generality, we can assume that $v = 0$ on $\partial B^N$ (since, on $\partial B^N$, $\nabla v(x) = cx$ is normal to $\partial B^N$). Let $(\phi_k)_{k=0}^\infty$ be an orthonormal basis of $L^2(\mathbb{S}^{N-1})$ consisting of eigenfunctions of the Laplace-Beltrami operator on $\SN$ corresponding to eigenvalues $0 = \lambda_0 < N-1 = \lambda_1 = \ldots  = \lambda_N < 2N = \lambda_{N+1} \leq \ldots \rightarrow \infty$.
We decompose
\[
v(r\theta) = \sum_{k=0}^\infty v_k(r) \phi_k(\theta) \text{ where } v_k(r) = \int_{\mathbb{S}^{N-1}} v(r\theta) \phi_k(\theta)\,d\sigma(\theta).
\]
Note that $v_k \in H^2_{\loc}(0,1)$, and
\begin{align}
(\check v')^2 
	&= \sum_{k=0}^\infty \Big[(v_k' )^2 + \frac{\lambda_k}{r^2} v_k^2 \Big],
	\label{Eq:barv}\\
\int_{B^N} (\Delta v)^2\,dx
	&= \sum_{k=0}^\infty \int_0^1 r^{N-1} \Big(v_k'' + \frac{N-1}{r} v_k' - \frac{\lambda_k}{r^2} v_k\Big)^2\,dr.
	\label{Eq:DelvL2}
\end{align}
Note also that our hypotheses give in the case $N \in \{3,4\}$ that $v_1 = \ldots = v_N = 0$.

We first prove inequality \eqref{Eq:RearrIneql} when $v$ belongs to the set $\mathcal{S}$ defined in Lemma \ref{Lem:den}. Then $v_0 \in \mathcal{C}^\infty([0,1])$, $v_0$ is constant near $0$, $v_k \in   \mathcal{C}_c^\infty((0,1])$ for $k \geq 1$, 
\[
v_0(1) = 0, v_0'(1) = c \text{ and } v_k(1) = v_k'(1) = 0 \text{ for } k \geq 1.
\]
This implies $\nabla \check v(x) = |c| x$ on $\partial B^N$ (recall that, by definition, $\check v' \geq 0$ in $(0,1)$). Also,
\begin{align*}
\int_0^1 r^{N-2} v_k'' v_k'\,dr &=  -\frac{N-2}{2} \int_0^1 r^{N-3} (v_k')^2\,dr + \begin{cases}
	\frac{c^2}{2}& \text{ if } k = 0,\\
	0 & \text{ if } k \geq 1,
\end{cases}
\\
\int_0^1 r^{N-4} v_k' v_k\,dr &=  -\frac{N-4}{2} \int_0^1 r^{N-5} v_k^2\,dr
	\text{ for } k \geq 1 , \\
\int_0^1 r^{N-3} v_k'' v_k\,dr &=  \int_0^1 \Big[-r^{N-3} (v_k')^2 + \frac{(N-3)(N-4)}{2}  r^{N-5} v_k^2\Big]\,dr\text{ for } k \geq 1.
\end{align*}
Inserting the above identities in \eqref{Eq:DelvL2}, we obtain
\begin{equation}
\int_{B^N} (\Delta v)^2\,dx
	= (N - 1)c^2 + \sum_{k=0}^\infty \int_0^1 r^{N-1} \Big[(v_k'')^2 + \frac{2\lambda_k + N-1}{r^2} (v_k')^2 + \frac{\lambda_k(\lambda_k + 2(N-4))}{r^4} v_k^2\Big]\,dr.
	\label{Eq:DvE}
\end{equation}

Next, note that, when $v \in \mathcal{S}$, the right hand side of \eqref{Eq:barv} is a smooth non-negative function and so $\check v'$ is Lipschitz continuous. Applying \eqref{Eq:DvE} to $\check v$, we get
\begin{align}
\int_{B^N} (\Delta \check v)^2\,dx
	&=  (N - 1)c^2 + \int_0^1 r^{N-1} \Big[(\check v'')^2 + \frac{N-1}{r^2} (\check v')^2\Big]\,dr.
	\label{Eq:DcvE}
\end{align}
To continue, we need to estimate $\check v''$. For technical reasons, we consider for $\mu > 0$ a regularized version of $\check v$:
\[
\check v_\mu' = \Big\{\mu + \sum_{k=0}^\infty \Big[(v_k' )^2 + \frac{\lambda_k}{r^2} v_k^2 \Big]\Big\}^{1/2} \geq \mu^{1/2}.
\]
Clearly $\check v_\mu'$ is smooth and $\check v_\mu' \rightarrow \check v'$ pointwise in $(0,1)$ as $\mu \rightarrow 0$. Now, for some $t_k \in \R$ to be chosen later, we have by \eqref{Eq:barv} that
\begin{align*}
|\check v_\mu'| |\check v_\mu''| 
	&= \Big|\sum_{k=0}^\infty \Big[v_k' v_k'' + \frac{\lambda_k}{r^2} v_k v_k' - \frac{\lambda_k}{r^3} v_k^2 \Big]\Big|\\
	&\leq |v_0'| |v_0''| +  \Big|\sum_{k=1}^\infty \Big[v_k' (v_k'' + \frac{t_k}{r^2} v_k) + \frac{1}{r} v_k( \frac{\lambda_k - t_k}{  r} v_k' - \frac{\lambda_k }{r^2} v_k)\Big]\Big|\\
	&\leq |v_0'| |v_0''| +  \sum_{k=1}^\infty \Big[(v_k' )^2 + \frac{\lambda_k}{r^2} v_k^2 \Big]^{1/2}\Big[(v_k'' + \frac{t_k}{r^2} v_k)^2 + 
 \frac{1}{\lambda_k} ( \frac{\lambda_k - t_k}{  r} v_k' - \frac{\lambda_k }{r^2} v_k)^2 \Big]^{1/2}\\
	&\leq |\check v_\mu'| \Big\{|v_0''|^2 + \sum_{k=1}^\infty \Big[(v_k'' + \frac{t_k}{r^2} v_k)^2 + \frac{1}{\lambda_k} ( \frac{\lambda_k - t_k}{  r} v_k' - \frac{\lambda_k }{r^2} v_k)^2 \Big]\Big\}^{1/2}.
\end{align*}
Since $\check v_\mu' \geq \mu^{1/2} > 0$, this implies
\[
|\check v_\mu''| 
	\leq \Big\{|v_0''|^2 + \sum_{k=1}^\infty \Big[(v_k'' + \frac{t_k}{r^2} v_k)^2 + \frac{1}{\lambda_k} ( \frac{\lambda_k - t_k}{  r} v_k' - \frac{\lambda_k }{r^2} v_k)^2 \Big]\Big\}^{1/2}.
\]
This implies that $\{\check v_\mu'\}$ is bounded in $W^{1,\infty}((0,1))$ and converges weakly* in $W^{1,\infty}((0,1))$ to $\check v'$ as $\mu \rightarrow 0$ (since $\check v_\mu' \rightarrow \check v'$ pointwise), and
\[
|\check v''| 
	\leq \Big\{|v_0''|^2 + \sum_{k=1}^\infty \Big[(v_k'' + \frac{t_k}{r^2} v_k)^2 + \frac{1}{\lambda_k} ( \frac{\lambda_k - t_k}{  r} v_k' - \frac{\lambda_k }{r^2} v_k)^2 \Big]\Big\}^{1/2}.
\]
Returning to \eqref{Eq:DcvE}, we get
\begin{align*}
&\int_{B^N} (\Delta \check v)^2\,dx
	\leq   (N - 1)c^2 + \int_0^1 r^{N-1}\Big[(v_0'')^2 + \frac{N-1}{r^2} (v_0')^2\Big]\,dr \\
		&\quad+ \sum_{k=1}^\infty\int_0^1 r^{N-1} \Big[(v_k'' + \frac{t_k}{r^2} v_k)^2 + \frac{1}{\lambda_k} ( \frac{\lambda_k - t_k}{  r} v_k' - \frac{\lambda_k }{r^2} v_k)^2	+ \frac{N-1}{r^2} (v_k')^2 + \frac{(N-1)\lambda_k}{r^4} v_k^2\Big]\,dr\\
	&=  (N - 1)c^2 + \int_0^1 r^{N-1}\Big[(v_0'')^2 + \frac{N-1}{r^2} (v_0')^2\Big]\,dr\\
		&\quad + \sum_{k=1}^\infty\int_0^1 r^{N-1} \Big[(v_k'')^2 + \frac{\lambda_k^{-1} (\lambda_k - t_k)^2 - 2t_k + N - 1}{r^2} (v_k')^2\\
			&\qquad + \frac{2\lambda_k(N-2) + t_k^2 + t_k(N-4)^2}{r^4} v_k^2  \Big]\,dr.
\end{align*}
Recalling \eqref{Eq:DvE}, we get
\begin{align}
\int_{B^N} (\Delta   v)^2\,dx - \int_{B^N} (\Delta \check v)^2\,dx
	&\geq \sum_{k=1}^\infty\int_0^1 r^{N-1} \Big[  \frac{ \lambda_k  - \lambda_k^{-1} t_k^2 + 4t_k }{r^2} (v_k')^2\nonumber\\
		&\qquad + \frac{\lambda_k^2 - 4\lambda_k  - t_k^2 - t_k(N-4)^2}{r^4} v_k^2  \Big]\,dr.
		\label{Eq:X1}
\end{align}

Case 1: If $N \geq 5$, we choose $t_k = 0$, and using the sharp Hardy inequality $\int_0^1 r^{N-3} (v_k')^2\,dr \geq \frac{(N-4)^2}{4} \int_0^1 r^{N-5} v_k^2\,dr$ to obtain from \eqref{Eq:X1} the inequality
\begin{equation}\label{Eq:PreRearr}
\int_{B^N} (\Delta   v)^2\,dx \geq \int_{B^N} (\Delta \check v)^2\,dx + \sum_{k=1}^\infty \lambda_k s_k \int_0^1 r^{N-5}   v_k^2  \,dr,
\end{equation}
where, for $k \geq 1$,
\[
s_k = \lambda_k + \frac{(N-4)^2}{4} - 4 > 0  \quad (\text{since } \lambda_k \geq N - 1 \geq 4).
\]
Inequality \eqref{Eq:RearrIneql} thus follows.

Case 2: If $N \in \{2,3,4\}$, recall that our hypotheses give $v_1 = \ldots = v_{N} = 0$. We choose $t_k = (2-\sqrt{5})\lambda_k$ in \eqref{Eq:X1} so that the term involving $v_k'$ vanishes, and arrive again at \eqref{Eq:PreRearr} but with
\[
s_k = \begin{cases}
	0 & \text{ if } 1 \leq k \leq N,\\
	(\sqrt{5}-2)(4\lambda_k+(N-4)^2) - 4 &\text{ if }  k \geq N + 1.
	\end{cases}
\]
As $\lambda_k \geq 2N$ for $k \geq N+1$, we have 
\begin{align*}
s_k 
	&\geq (\sqrt{5}-2) (N^2 + 16) - 4 \geq 20 \sqrt{5} - 44  > 0 \text{ for } N \in \{2,3,4\}, k \geq N+1.
\end{align*}
Inequality \eqref{Eq:RearrIneql} thus follows from \eqref{Eq:PreRearr}.

Consider now the general case $v \in H^2(B^N)$. By Lemma \ref{Lem:den} we can select $\{v_{(j)}\} \subset \mathcal{S}$ such that $v_{(j)} \rightarrow v$ in $H^2(B^N)$ as $j \rightarrow \infty$. Moreover, in case $N \in \{2,3,4\}$, it holds also that $\int_{\SN} v_{(j)}(r\theta)\theta\,d\sigma(\theta) = 0$. By Fubini's theorem, after passing to a subsequence, we have $(v_{(j)})_k \rightarrow v_k$ a.e. in $(0,1)$ for the spherical harmonic coefficients of $v_{(j)}$ and $v$. Also, by (i), $\nabla\check v_{(j)} \rightarrow \nabla \check v$ in $L^2(B^N,\R^N)$. Since $v_{(j)} \in \mathcal{S}$, we have by \eqref{Eq:PreRearr} 
\begin{equation}
\int_{B^N} (\Delta   v_{(j)})^2\,dx \geq  \int_{B^N} (\Delta \check v_{(j)})^2\,dx + \sum_{k=1}^\infty \lambda_k s_k \int_0^1 r^{N-5}   (v_{(j)})_k^2  \,dr.
	\label{Eq:PRm}
\end{equation}
This implies that $\{\check v_{(j)}\}$ is bounded in $H^2(B^N)$. As $\nabla\check v_{(j)} \rightarrow \nabla \check v$ in $L^2(B^N,\R^N)$, this implies $\Delta \check v_{(j)}$ converges weakly in $L^2(B^N)$ to $\Delta \check v$; in particular, $\check v \in H^2(B^N)$. Sending $j \rightarrow \infty$ in \eqref{Eq:PRm}, using the convergence of $v_{(j)}$ to $v$ in $H^2(B^N)$ on the left hand side, the weak convergence of $\Delta\check v_{(j)}$ to $\Delta\check v$ in $L^2(B^N)$ and Fatou's lemma for the infinite sum on the right hand side, we see that \eqref{Eq:PreRearr} remains valid for $v \in H^2(B^N)$. This proves \eqref{Eq:RearrIneql} for $v \in H^2(B^N)$. Also, equality occurs in \eqref{Eq:RearrIneql} if and only if $v_k = 0$ for all $k \geq 1$, meaning $v$ is radially symmetric and $|v'| = \check v'$ in $(0,1)$.
\end{proof}

\subsection{Second proof of Theorems \ref{thm:AG}, \ref{thm2} and \ref{thm1} in dimension $N\geq 5$}

\begin{proof}[Second proof of Theorem \ref{thm:AG} in dimension $N\geq 5$]
As $s \mapsto W(1 - s)$ is convex, we deduce from Theorem \ref{main_rearrangement} that
\[
E_\eps^{GL}[\nabla u] \geq E_\eps^{GL}[\nabla \check u] \text{ for all } \nabla u \in \mcA^{GL},
\]
where equality holds if and only if $u$ is radially symmetric. In particular, if $\nabla u \in \mcA^{GL}$ is a minimizer of $E_\eps^{GL}$ among gradient field configurations in $\mcA^{GL}$, then so is $\nabla \check u$ with $E_\eps^{GL}[\nabla u] = E_\eps^{GL}[\nabla \check u]$ and hence $u$ is radially symmetric. The conclusion then follows from \cite[Theorem 2.1]{IN24-IHP} on the uniqueness of radially symmetric critical point of $E_{\eps}^{GL}$ in $\mcA^{GL}$.
\end{proof}

\begin{proof}[Second proof of Theorem \ref{thm2} in dimension $N\geq 5$]
Observe that if $(\nabla m, M_{N+1}) \in \mcA^{MM}$ and if $\check m$ denotes the symmetrization of $m$ by \eqref{v-rad} and $\check{M}_{N+1}$ denotes the symmetrization of $M_{N+1}$ by \eqref{Eq:g-sym}, then $(\nabla \check{m}, \check{M}_{N+1}) \in \mcA^{MM}$ because
\[
|\nabla \check{m}|^2(r) + \check{M}_{N+1}(r)^2 = \fint_{\SN} \big(|\nabla m|^2(r\theta) + M_{N+1}^2(r\theta)\big)\,d\sigma(\theta) = 1.
\]
Thus, by Theorems \ref{main_rearrangement} and \ref{Thm:scRe}, if $(\nabla m, M_{N+1}) \in \mcA^{MM}$ is a minimizer of $E_\eta^{MM}$ in $\mcA^{MM}$, the $(\nabla \check m, \check M_{N+1})$  is also a minimizer of $E_\eta^{MM}$ in $\mcA^{MM}$ and $(\nabla m, M_{N+1})$ is radially symmetric. The conclusion then follows from \cite[Theorem 2.6]{IN24-IHP} on the classification of radially symmetric minimizers of $E_{\eta}^{MM}$.
\end{proof}

\begin{proof}[Second proof of Theorem \ref{thm1} in dimension $N\geq 5$]
Let $U = (\nabla v, g)\in \mcA$ be a minimizer of $E_{\eps,\eta}$ in $\mcA$. Define the symmetrization $\check v$ and $\check g$ of $v$ and $g$ as in the previous two sections with $q = 2$, and let $\check U = (\nabla \check v, \check g)$. By Theorems \ref{main_rearrangement} and \ref{Thm:scRe}, we have
\begin{align*}
\int_{\SN} |\nabla \check v(r\theta)|^2\,d\sigma(\theta) 
	&= \int_{\SN} |\nabla v(r\theta)|^2\,d\sigma(\theta) \text{ for a.e. } r \in (0,1),\\
\int_{\SN} \check g(r\theta)^2\,d\sigma(\theta) 
	&= \int_{\SN} g(r\theta)^2\,d\sigma(\theta) \text{ for a.e. } r \in (0,1),\\
\int_{B^N} \tilde W(\check g^2)\,dx
	&\leq \int_{\SN} \tilde W(g^2)\,dx,\\
\int_{B^N} (\Delta \check{v})^2\,dx
	&\leq \int_{B^N} (\Delta v)^2\,dx.
\end{align*}
The first two identities and the convexity of $W$ give
\begin{multline*}
\fint_{\SN} W(1 - |\check U|^2)(r\theta)\,d\sigma(\theta)
	= W\Big(\fint_{\SN}(1 - |\check U|^2)(r\theta)\,d\sigma(\theta)\Big)\\
	= W\Big(\fint_{\SN}(1 - |U|^2)(r\theta)\,d\sigma(\theta)\Big)
	\leq \fint_{\SN} W(1 - |U|^2)(r\theta)\,d\sigma(\theta).
\end{multline*}
These estimates together implies that $E_{\eps,\eta}[\check U] \leq E_{\eps,\eta}[U]$ and so $\check U$ is also a minimizer of $E_{\eps,\eta}$ in $\mcA$ with $E_{\eps,\eta}[\check U] = E_{\eps,\eta}[U]$. Returning to the equality cases in Theorems \ref{main_rearrangement} and \ref{Thm:scRe}, we have that $v$ and $g$ are radially symmetric, i.e. $U$ is a radially symmetric minimizer of $E_{\eps,\eta}$. The conclusion follows from \cite[Theorem 2.4]{IN24-IHP} on the classification of radially symmetric minimizer of $E_{\eps,\eta}$.
\end{proof}

\subsection{Symmetry for solutions to a nonlinear eigenvalue problem}\label{ssec:RApp}

For $d > 0$, $1 \leq p < 2$ and $\lambda \in \RR$, consider the energy functional
\[
J[v] = \frac{1}{2}\|\Delta v\|_{L^2(B^N)}^2 - \frac{\lambda}{2}\|v\|_{L^2(B^N)}^2
\]
on the set
\[
S_{p,d} = \big\{v \in H^2_0(B^N): \|v\|_{L^p(B^N)} = d\big\}.
\]
Let  $\lambda_1(\Delta^2)$ denote the first eigenvalue of the bi-Laplacian in $H_0^2(B^N)$. When $\lambda <  \lambda_1(\Delta^2)$, after adjusting by a scaling factor to remove the Lagrange multiplier, minimizers of $J$ on $S_{p,d}$ correspond to solutions of the elliptic problem \eqref{Eq:BHE}:
\[
\begin{cases}
\Delta^2 v  = \lambda v + |v|^{p-2} v \text{ on } B^N,\\
v = \partial_r v = 0 \text{ on } \partial B^N.
\end{cases}
\]
(For $\lambda \geq \lambda_1(\Delta^2)$, the partial differential equation is different, namely
\[
\Delta^2 v  = \begin{cases} 
	\lambda_1(\Delta^2) v  &\text{ if } \lambda = \lambda_1(\Delta^2) ,\\
	\lambda v - |v|^{p-2} v  &\text{ if } \lambda > \lambda_1(\Delta^2) ,
	\end{cases}
\]
and we do not consider these cases here for simplicity.)

Problem \eqref{Eq:BHE} has been studied by many authors and a summary of known results would go beyond the scope of the present paper. We refer the reader to e.g. \cite{BernisAzoreroPeral96, Bonheureetal19-TrAMS, EdmundsFortunatoJannelli90, FrankLenzmannSilvestre16-CPAM, GazzolaGrunauSquassina03, lenzmann21} and the references therein.

We prove:

\begin{corollary}\label{Cor:6}
Let $N \geq 5$ and $1 \leq p < 2$. For $\lambda < \lambda_1(\Delta^2)$, minimizers of $J$ over $S_{p,d}$ are radially symmetric, do not change sign and are either radially non-decreasing or radially non-increasing.
\end{corollary}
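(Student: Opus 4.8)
The plan is to reduce the corollary to the radial-symmetry assertion by means of the symmetrization $v\mapsto\check v$ of Theorem~\ref{main_rearrangement}, and then to read off the sign and the monotonicity from the explicit structure of $\check v$ together with the equality cases. First, since $J$ is $2$-homogeneous and $v\mapsto\|v\|_{L^p(B^N)}$ is $1$-homogeneous, minimizing $J$ over $S_{p,d}$ is equivalent — up to a rescaling, which preserves radiality, sign and monotonicity — to minimizing the generalized Rayleigh quotient
\[
R[v]:=\frac{\|\Delta v\|_{L^2(B^N)}^2-\lambda\,\|v\|_{L^2(B^N)}^2}{\|v\|_{L^p(B^N)}^2}
\]
over $H^2_0(B^N)\setminus\{0\}$; because $\lambda<\lambda_1(\Delta^2)$ the numerator, hence $R$, is strictly positive there. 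So it suffices to show every minimizer of $R$ is radially symmetric, of one sign, and monotone in $r$.

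Next, I would take a minimizer $v$ of $R$ and let $\check v$ be associated to $v$ by \eqref{v-rad}. Since $v\in H^2_0(B^N)$ (so $\nabla v=0$ on $\partial B^N$, i.e.\ $c=0$ in Theorem~\ref{main_rearrangement}(iv)) and $N\geq 5$, Theorem~\ref{main_rearrangement} gives $\check v\in H^2_0(B^N)$ with $\|\Delta\check v\|_{L^2}\leq\|\Delta v\|_{L^2}$, and with $\|\check v\|_{L^2}\geq\|v\|_{L^2}$ and $\|\check v\|_{L^p}\geq\|v\|_{L^p}$ from part~(iii) used with exponents $2$ and $p$. When $\lambda\geq 0$, both terms of the numerator of $R$ can only decrease and the denominator can only increase under $v\mapsto\check v$; as the numerator is positive this forces $R[\check v]\leq R[v]$, hence $R[\check v]=R[v]$, and therefore $\|\Delta\check v\|_{L^2}=\|\Delta v\|_{L^2}$ and $\|\check v\|_{L^p}=\|v\|_{L^p}$. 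By the equality statement in Theorem~\ref{main_rearrangement}(iv), $v$ is radially symmetric with $|v'|=\check v'$ on $(0,1)$; combining $\|\check v\|_{L^p}=\|v\|_{L^p}$ with part~(iii) and the radiality of $v$ then gives $|v|=|\check v|$ a.e.\ on $B^N$.

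From here the remaining two properties are immediate: by construction $\check v\leq 0$, $\check v$ is non-decreasing in $r$, and $\check v(1)=0$, so $\{\check v<0\}$ is an interval $(0,r_*)$. Then $|v|=|\check v|$ and the continuity of the radial function $v$ on $(0,1)$ force $v=-\check v$ on all of $(0,1)$ or $v=\check v$ on all of $(0,1)$: in the first case $v\geq 0$ and $v$ is radially non-increasing, in the second $v\leq 0$ and $v$ is radially non-decreasing. Together with the previous paragraph this settles the case $\lambda\geq 0$, and then the conclusion for minimizers of $J$ follows by rescaling.

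The main obstacle is the case $\lambda<0$: there the term $-\lambda\|\cdot\|_{L^2}^2$ \emph{increases} under $v\mapsto\check v$, so the inequality $R[\check v]\leq R[v]$ is no longer automatic — the symmetrization lowers $\|\Delta v\|_{L^2}$ but raises \emph{both} $\|v\|_{L^2}$ and $\|v\|_{L^p}$. To handle this I would play the quantitative surplus in the $\dot H^2$-inequality — the term $\sum_{k\geq 1}\lambda_k s_k\int_0^1 r^{N-5}v_k^2\,dr$ appearing in \eqref{Eq:PreRearr} — against this growth, or alternatively couple $\check v$ with the $L^p$-symmetrization of Theorem~\ref{Thm:scRe} (which leaves $\|v\|_{L^p}$ unchanged and lowers $\|v\|_{L^2}$, though on its own it controls no second-order quantity). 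Once $R[\check v]\leq R[v]$ is secured for all $\lambda<\lambda_1(\Delta^2)$, the two preceding steps go through verbatim; carefully tracking all the equality cases is then the only remaining technical point.
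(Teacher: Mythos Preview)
Your reformulation via the Rayleigh quotient $R$ is equivalent to the paper's argument: the paper keeps the constraint and rescales $\check v$ back onto $S_{p,d}$ by setting $\bar v=\mu\check v$ with $\mu=d/\|\check v\|_{L^p}\le 1$, then writes $J[\bar v]=\mu^2 J[\check v]\le\mu^2 J[v]\le J[v]$, the last step using $\mu\le 1$ and $J[v]\ge 0$. The middle inequality $J[\check v]\le J[v]$ is precisely your step $R[\check v]\le R[v]$, and both proofs feed in the same three inequalities from Theorem~\ref{main_rearrangement}(iii),(iv). For $\lambda\ge 0$ your argument is complete and coincides with the paper's.

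You correctly flag the obstacle for $\lambda<0$: since $\|\check v\|_{L^2}\ge\|v\|_{L^2}$, the term $-\tfrac{\lambda}{2}\|\cdot\|_{L^2}^2$ \emph{increases} under $v\mapsto\check v$, and neither $J[\check v]\le J[v]$ nor $R[\check v]\le R[v]$ is immediate. The paper's own proof invokes \eqref{Eq:C6vv} for the step $J[\check v]\le J[v]$ without distinguishing the sign of $\lambda$, so as written it shares the very gap you identify; your suggested remedies (exploiting the remainder in \eqref{Eq:PreRearr}, or coupling with the scalar symmetrization of Theorem~\ref{Thm:scRe}) are not carried through and neither obviously closes it. For the sign and monotonicity, the paper is slightly more direct than your connectedness argument on $\{\check v<0\}$: once $v$ is radial with $|v'|=\check v'$, one has $|v(r)|=\bigl|\int_r^1 v'\bigr|\le\int_r^1|v'|=|\check v(r)|$, and the saturated norm equality $\|v\|_{L^2}=\|\check v\|_{L^2}$ then forces $|v|=|\check v|$ a.e., hence equality in the integral triangle inequality, hence $v'$ (and consequently $v$, since $v(1)=0$) keeps one sign.
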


\begin{proof}
Note that as $\lambda <  \lambda_1(\Delta^2)$, $J$ is coercive on $H_0^2(B^N)$. By the compactness embedding theorem, $J$ has a minimizer over $S_{p,d}$. 

Let $v$ be a minimizer of $J$ over $S_{p,d}$; in particular, $J[v] \geq 0$. By Theorem \ref{main_rearrangement}, we have
\begin{equation}
\begin{cases}
\|\Delta\check v\|_{L^2(B^N)} \leq \|\Delta v\|_{L^2(B^N)},\\
\|\check v\|_{L^p(B^N)} \geq \| v\|_{L^p(B^N)}  = d, \\
\|\check v\|_{L^2(B^N)} \geq \| v\|_{L^2(B^N)}.
\end{cases}
	\label{Eq:C6vv}
\end{equation}
Let
\[
\bar v = \mu \check v \text{ where } \mu = d\|\check v\|_{L^p(B^N)}^{-1} \stackrel{\eqref{Eq:C6vv}}{\leq} 1
\]
so that $\bar v \in S_{p,d}$. We compute, keeping in mind that $\mu \leq 1$,
\[
J[\bar v] = \mu^2 J[\check v] \stackrel{\eqref{Eq:C6vv}}{\leq} \mu^2 J[v] \leq J[v],
\]
where for the last inequality we use the fact that $J[v] \geq 0$. It follows that $\bar v$ is also a minimizer of $J$ over $S_{p,d}$, which in turn implies $J[\check v] = J[v]$ and all inequalities in \eqref{Eq:C6vv} are saturated. Appealing to the equality case in Theorem \ref{main_rearrangement}(iv), we see that $v$ is radially symmetric and $\check v' = |v'|$. 

It remains to prove that $v$ and $\partial_r v$ do not change sign. Indeed, we have
\[
|v(r)| = |v(r) - v(1)| = \Big|\int_r^1 v'(s)\,ds\Big| \leq \int_r^1 |v'(s)|\,ds = \int_r^1 \check v'(s)\,ds = |\check v(r)|.
\]
As $\|\check v\|_{L^2(B^N)} = \|v\|_{L^2(B^N)}$, it follows that equality is attained in the above inequality, i.e. $v'$ does not change sign. As $v(1) = 0$, it follows also that $v$ does not change sign.
\end{proof}

\appendix
\section{The negativity of $F_\eps$ in dimension $N \in \{2,3\}$} \label{app:F<0}

We now give the proof of Proposition \ref{Prop:F<0} on the negativity of $F_\eps$ in dimension $N \in\{2,3\}$. 
\begin{proof}[Proof of Proposition \ref{Prop:F<0}]
We follow ideas from e.g. the proof of \cite[Lemma 2.3]{IN24-IHP}, \cite[Proposition 4.1]{INSZ_CRAS}, \cite[Theorem 1.7]{INSZ_AnnIHP}. The main task is to show that there exists $v \in \mathcal{C}_c^2(B^N \setminus \{0\})$ such that\footnote{Note that $\mathcal{C}_c^2(B^N \setminus \{0\})$ is not a dense subspace of $H_0^2(B^N)$ in dimension $N \in \{2,3\}$, hence the existence of such $v$ does not follow immediately from the sharpness of the Hardy inequality \eqref{eq_HardyRellich}.}
\begin{equation}
F_*[\nabla v] := \int_{B^N} \Big[(\Delta v)^2\, - \frac{N-1}{r^2} |\nabla v|^2\Big]\,dx < 0.
	\label{Eq:PTrial}
\end{equation}
Supposing for the moment that such a $v$ has been found, we proceed to show that $F_\eps[\nabla v] < 0$ for this particular $v$ and for sufficiently small $\eps > 0$. Indeed, using the Hardy decomposition Lemma \ref{Lem:HardyD} with the decomposition $\nabla v = f_\eps \frac{\nabla v}{f_\eps}$, noting that $\Delta f_\eps = \frac{N-1}{r^2} f_\eps - \frac{1}{\eps^2} W'(1 - f_\eps^2) f_\eps$, we find
\[
F_\eps[\nabla v] = \int_{B^N} f_\eps^2\Big[\Big|\nabla\big(\frac{\nabla v}{f_\eps}\big)\Big|^2 - \frac{N - 1}{r^2} \frac{|\nabla v|^2}{f_\eps^2}\Big]\,dx.
\]
Since $f_\eps \rightarrow 1$ in $\mathcal{C}^1_{\rm loc}(B^N \setminus \{0\})$ and $v \in \mathcal{C}_c^2(B^N \setminus \{0\}$, we deduce that
\[
\lim_{\eps\rightarrow 0} F_\eps[\nabla v] = \int_{B^N} \Big[|\nabla^2 v|^2 - \frac{N - 1}{r^2} |\nabla v|^2\Big]\,dx = F_*[\nabla v] < 0,
\]
which gives the conclusion.

It remains to find $v \in \mathcal{C}_c^2(B^N\setminus\{0\})$ satisfying \eqref{Eq:PTrial}. The proof of Theorem \ref{main_rearrangement} suggests the ansatz
\[
v(x) = a(r)\frac{x_1}{r}.
\]
We are thus led to searching for $a \in \mathcal{C}_c^2((0,1))$ such that
\[
\int_0^1 r^{N-1}\Big[(a'')^2 + \frac{2(N-1)}{r^2} (a')^2 + \frac{2(N-1)(N-4)}{r^4} a^2\Big]\,dr < 0.
\]
We decompose $a(r) = r^{-\frac{N-4}{2}}b(r)$ and compute
\begin{align*}
\int_0^1 r^{N-1}(a'')^2\,dr
	&= \int_0^1 r^3 \Big(b'' - \frac{N-4}{r} b' + \frac{(N-2)(N-4)}{4r^2}b\Big)^2\,dr\\
	&= \int_0^1 \Big(r^3 (b'')^2 + \frac{(N-2)(N-4)}{2} r (b')^2 + \frac{(N-2)^2(N-4)^2}{16r} b^2\Big)\,dr,\\
\int_0^1 r^{N-3}(a')^2\,dr
	&= \int_0^1 r\Big(b' - \frac{N-4}{2r} b\Big)^2\,dr\\
	&= \int_0^1 \Big(r (b')^2 + \frac{(N-4)^2}{4r} b^2\Big)\,dr
\end{align*}
We thus need to find $b \in \mathcal{C}_c^2((0,1))$ such that
\begin{equation}
\int_0^1 \Big( r^3 (b'')^2 + \frac{N^2 - 2N + 4}{2} r (b')^2 + \frac{(N-4)(N^3 + 12N - 16)}{16r} b^2\Big)\,dr < 0.
	\label{Eq:Trial}
\end{equation}
To this end, we fix a cut-off function $\varphi \in \mathcal{C}_c^\infty([0,\infty))$ with $\varphi \equiv 1$ in $[0,1/4]$, $\varphi \equiv 0$ in $[1/2,\infty)$. For $j > 20$ large to be fixed, we let
\[
b(r) = \begin{cases}
	\varphi(r) & \text{ if } r \geq 1/8,\\
	\varphi\big(\frac{\ln \ln \frac{1}{r}}{4\ln \ln j}\big) & \text{ if } 0 < r < 1/8.
	\end{cases}
\]
Then, for some constant $C$ independent of $j$, we have
\begin{align*}
&\int_0^1 \frac{1}{r}b^2\,dr
	 \geq \int_{1/j}^{1/4} \frac{dr}{r} =  \ln \frac{j}{4},\\
&\int_0^1 \Big(r^3 (b'')^2 + r (b')^2 \Big)\,dr
	\leq C.
\end{align*}
Therefore, as $(N-4)(N^3 + 12N - 16) < 0$ for $N \in \{2,3\}$, we can select a sufficiently large $j$ so that \eqref{Eq:Trial} is satisfied.
\end{proof}

\def\cprime{$'$}


\begin{thebibliography}{10}

\bibitem{An-Sh}
{\sc Andr\'{e}, N., and Shafrir, I.}
\newblock On nematics stabilized by a large external field.
\newblock {\em Rev. Math. Phys. 11}, 6 (1999), 653--710.

\bibitem{Beckner08-FM}
{\sc Beckner, W.}
\newblock Weighted inequalities and {S}tein-{W}eiss potentials.
\newblock {\em Forum Math. 20}, 4 (2008), 587--606.

\bibitem{Berchio08}
{\sc Berchio, E., Gazzola, F., and Weth, T.}
\newblock Radial symmetry of positive solutions to nonlinear polyharmonic
  {D}irichlet problems.
\newblock {\em J. Reine Angew. Math. 620\/} (2008), 165--183.

\bibitem{BernisAzoreroPeral96}
{\sc Bernis, F., Garc\'{\i}a~Azorero, J., and Peral, I.}
\newblock Existence and multiplicity of nontrivial solutions in semilinear
  critical problems of fourth order.
\newblock {\em Adv. Differential Equations 1}, 2 (1996), 219--240.

\bibitem{BethuelBrezisColemanHelein}
{\sc Bethuel, F., Brezis, H., Coleman, B.~D., and H\'{e}lein, F.}
\newblock Bifurcation analysis of minimizing harmonic maps describing the
  equilibrium of nematic phases between cylinders.
\newblock {\em Arch. Rational Mech. Anal. 118}, 2 (1992), 149--168.

\bibitem{BBH_book}
{\sc Bethuel, F., Brezis, H., and H{\'e}lein, F.}
\newblock {\em Ginzburg-{L}andau vortices}.
\newblock Progress in Nonlinear Differential Equations and their Applications,
  13. Birkh\"auser Boston Inc., Boston, MA, 1994.

\bibitem{Bonheureetal19-TrAMS}
{\sc Bonheure, D., Casteras, J.-B., Gou, T., and Jeanjean, L.}
\newblock Normalized solutions to the mixed dispersion nonlinear
  {S}chr\"{o}dinger equation in the mass critical and supercritical regime.
\newblock {\em Trans. Amer. Math. Soc. 372}, 3 (2019), 2167--2212.

\bibitem{Brezis99_PSPM}
{\sc Brezis, H.}
\newblock Symmetry in nonlinear {PDE}'s.
\newblock In {\em Differential equations: {L}a {P}ietra 1996 ({F}lorence)},
  vol.~65 of {\em Proc. Sympos. Pure Math.} Amer. Math. Soc., Providence, RI,
  1999, pp.~1--12.

\bibitem{BrezisCoronLieb}
{\sc Brezis, H., Coron, J.-M., and Lieb, E.~H.}
\newblock Harmonic maps with defects.
\newblock {\em Comm. Math. Phys. 107}, 4 (1986), 649--705.

\bibitem{Cazacu20-PRSE}
{\sc Cazacu, C.}
\newblock A new proof of the {H}ardy-{R}ellich inequality in any dimension.
\newblock {\em Proc. Roy. Soc. Edinburgh Sect. A 150}, 6 (2020), 2894--2904.

\bibitem{Sirakov05}
{\sc de~Figueiredo, D.~G., and Sirakov, B.}
\newblock Liouville type theorems, monotonicity results and a priori bounds for
  positive solutions of elliptic systems.
\newblock {\em Math. Ann. 333}, 2 (2005), 231--260.

\bibitem{Pino-Felmer-Kow}
{\sc del Pino, M., Felmer, P., and Kowalczyk, M.}
\newblock Minimality and nondegeneracy of degree-one {G}inzburg-{L}andau vortex
  as a {H}ardy's type inequality.
\newblock {\em Int. Math. Res. Not.}, 30 (2004), 1511--1527.

\bibitem{DKMO02-CPAM}
{\sc Desimone, A., Kohn, R.~V., M\"{u}ller, S., and Otto, F.}
\newblock A reduced theory for thin-film micromagnetics.
\newblock {\em Comm. Pure Appl. Math. 55}, 11 (2002), 1408--1460.

\bibitem{EdmundsFortunatoJannelli90}
{\sc Edmunds, D.~E., Fortunato, D., and Jannelli, E.}
\newblock Critical exponents, critical dimensions and the biharmonic operator.
\newblock {\em Arch. Rational Mech. Anal. 112}, 3 (1990), 269--289.

\bibitem{ferrero07}
{\sc Ferrero, A., Gazzola, F., and Weth, T.}
\newblock Positivity, symmetry and uniqueness for minimizers of second-order
  {S}obolev inequalities.
\newblock {\em Ann. Mat. Pura Appl. (4) 186}, 4 (2007), 565--578.

\bibitem{FrankLenzmannSilvestre16-CPAM}
{\sc Frank, R.~L., Lenzmann, E., and Silvestre, L.}
\newblock Uniqueness of radial solutions for the fractional {L}aplacian.
\newblock {\em Comm. Pure Appl. Math. 69}, 9 (2016), 1671--1726.

\bibitem{GazzolaGrunauSquassina03}
{\sc Gazzola, F., Grunau, H.-C., and Squassina, M.}
\newblock Existence and nonexistence results for critical growth biharmonic
  elliptic equations.
\newblock {\em Calc. Var. Partial Differential Equations 18}, 2 (2003),
  117--143.

\bibitem{Gazzola10}
{\sc Gazzola, F., Grunau, H.-C., and Sweers, G.}
\newblock {\em Polyharmonic boundary value problems}, vol.~1991 of {\em Lecture
  Notes in Mathematics}.
\newblock Springer-Verlag, Berlin, 2010.
\newblock Positivity preserving and nonlinear higher order elliptic equations
  in bounded domains.

\bibitem{GhoussoubMoradifam11-MAnn}
{\sc Ghoussoub, N., and Moradifam, A.}
\newblock Bessel pairs and optimal {H}ardy and {H}ardy-{R}ellich inequalities.
\newblock {\em Math. Ann. 349}, 1 (2011), 1--57.

\bibitem{Gioia-James}
{\sc Gioia, G., and James, R.~D.}
\newblock Micromagnetics of very thin films.
\newblock {\em Proc. Roy. Soc. London A 453\/} (1997), 213--223.

\bibitem{GravejatPacherieSmets22}
{\sc Gravejat, P., Pacherie, E., and Smets, D.}
\newblock On the stability of the {G}inzburg-{L}andau vortex.
\newblock {\em Proc. Lond. Math. Soc. (3) 125}, 5 (2022), 1015--1065.

\bibitem{Gustafson}
{\sc Gustafson, S.}
\newblock Symmetric solutions of the {G}inzburg-{L}andau equation in all
  dimensions.
\newblock {\em Internat. Math. Res. Notices}, 16 (1997), 807--816.

\bibitem{HangLin01-ActaSin}
{\sc Hang, F.~B., and Lin, F.~H.}
\newblock Static theory for planar ferromagnets and antiferromagnets.
\newblock {\em Acta Math. Sin. (Engl. Ser.) 17}, 4 (2001), 541--580.

\bibitem{Hervex2}
{\sc Herv{\'e}, R.-M., and Herv{\'e}, M.}
\newblock \'{E}tude qualitative des solutions r\'eelles d'une \'equation
  diff\'erentielle li\'ee \`a l'\'equation de {G}inzburg-{L}andau.
\newblock {\em Ann. Inst. H. Poincar\'e Anal. Non Lin\'eaire 11}, 4 (1994),
  427--440.

\bibitem{Ignat09-SEDP}
{\sc Ignat, R.}
\newblock A survey of some new results in ferromagnetic thin films.
\newblock In {\em S\'{e}minaire: \'{E}quations aux {D}\'{e}riv\'{e}es
  {P}artielles. 2007--2008}, S\'{e}min. \'{E}qu. D\'{e}riv. Partielles.
  \'{E}cole Polytech., Palaiseau, 2009, pp.~Exp. No. VI, 21.

\bibitem{IN24-IHP}
{\sc Ignat, R., and Nguyen, L.}
\newblock Local minimality of $\mathbb{R}^n$-valued and $\mathbb{S}^n$-valued
  {G}inzburg--{L}andau vortex solutions in the unit ball {$B^N$}.
\newblock {\em Ann. Inst. H. Poincar\'e Anal. Non Lin\'eaire (2023). in press.
  https://doi.org/10.4171/aihpc/84\/}.

\bibitem{INSZ_CRAS}
{\sc Ignat, R., Nguyen, L., Slastikov, V., and Zarnescu, A.}
\newblock Stability of the vortex defect in the {L}andau-de {G}ennes theory for
  nematic liquid crystals.
\newblock {\em C. R. Math. Acad. Sci. Paris 351}, 13-14 (2013), 533--537.

\bibitem{ODE_INSZ}
{\sc Ignat, R., Nguyen, L., Slastikov, V., and Zarnescu, A.}
\newblock Uniqueness results for an {ODE} related to a generalized
  {G}inzburg-{L}andau model for liquid crystals.
\newblock {\em SIAM J. Math. Anal. 46}, 5 (2014), 3390--3425.

\bibitem{INSZ3}
{\sc Ignat, R., Nguyen, L., Slastikov, V., and Zarnescu, A.}
\newblock Stability of the melting hedgehog in the {L}andau-de {G}ennes theory
  of nematic liquid crystals.
\newblock {\em Arch. Ration. Mech. Anal. 215}, 2 (2015), 633--673.

\bibitem{INSZ_AnnIHP}
{\sc Ignat, R., Nguyen, L., Slastikov, V., and Zarnescu, A.}
\newblock Instability of point defects in a two-dimensional nematic liquid
  crystal model.
\newblock {\em Ann. Inst. H. Poincar\'e Anal. Non Lin\'eaire 33}, 4 (2016),
  1131--1152.

\bibitem{INSZ18_CRAS}
{\sc Ignat, R., Nguyen, L., Slastikov, V., and Zarnescu, A.}
\newblock Uniqueness of degree-one {G}inzburg-{L}andau vortex in the unit ball
  in dimensions {$N\geq7$}.
\newblock {\em C. R. Math. Acad. Sci. Paris 356}, 9 (2018), 922--926.

\bibitem{INSZ_AnnENS}
{\sc Ignat, R., Nguyen, L., Slastikov, V., and Zarnescu, A.}
\newblock On the uniqueness of minimisers of {G}inzburg-{L}andau functionals.
\newblock {\em Ann. Sci. \'{E}c. Norm. Sup\'{e}r. (4) 53}, 3 (2020), 589--613.

\bibitem{IgnatRus23-arxiv}
{\sc Ignat, R., and Rus, M.}
\newblock Vortex sheet solutions for the {G}inzburg--{L}andau system in
  cylinders: symmetry and global minimality.
\newblock {\em Calc. Var. Partial Differential Equations. to appear.
  https://arxiv.org/abs/2301.11430\/} (2023).

\bibitem{JagerKaul83-JRAM}
{\sc J\"{a}ger, W., and Kaul, H.}
\newblock Rotationally symmetric harmonic maps from a ball into a sphere and
  the regularity problem for weak solutions of elliptic systems.
\newblock {\em J. Reine Angew. Math. 343\/} (1983), 146--161.

\bibitem{LamyMarconi23-SNS}
{\sc Lamy, X., and Marconi, E.}
\newblock Stability of the vortex in micromagnetics and related models.
\newblock {\em Ann. Sc. Norm. Super. Pisa Cl. Sci. in press,
  {https://doi.org/10.2422/2036-2145.202209\_012}\/} (2023).

\bibitem{lenzmann21}
{\sc Lenzmann, E., and Sok, J.}
\newblock A sharp rearrangement principle in {F}ourier space and symmetry
  results for {PDE}s with arbitrary order.
\newblock {\em Int. Math. Res. Not. IMRN}, 19 (2021), 15040--15081.

\bibitem{LiMelcher18-JFA}
{\sc Li, X., and Melcher, C.}
\newblock Stability of axisymmetric chiral skyrmions.
\newblock {\em J. Funct. Anal. 275}, 10 (2018), 2817--2844.

\bibitem{LiebLoss95-JEDP}
{\sc Lieb, E.~H., and Loss, M.}
\newblock Symmetry of the {G}inzburg-{L}andau minimizer in a disc.
\newblock In {\em Journ\'{e}es ``\'{E}quations aux {D}\'{e}riv\'{e}es
  {P}artielles'' ({S}aint-{J}ean-de-{M}onts, 1995)}. \'{E}cole Polytech.,
  Palaiseau, 1995, pp.~Exp. No. XVIII, 12.

\bibitem{Lin-CR87}
{\sc Lin, F.-H.}
\newblock A remark on the map {$x/|x|$}.
\newblock {\em C. R. Acad. Sci. Paris S\'er. I Math. 305}, 12 (1987), 529--531.

\bibitem{Lorent12-ESIAM}
{\sc Lorent, A.}
\newblock A simple proof of the characterization of functions of low {A}viles
  {G}iga energy on a ball {\it via} regularity.
\newblock {\em ESAIM Control Optim. Calc. Var. 18}, 2 (2012), 383--400.

\bibitem{Lorent14-AnnPisa}
{\sc Lorent, A.}
\newblock A quantitative characterisation of functions with low {A}viles {G}iga
  energy on convex domains.
\newblock {\em Ann. Sc. Norm. Super. Pisa Cl. Sci. (5) 13}, 1 (2014), 1--66.

\bibitem{Mil-Pis}
{\sc Millot, V., and Pisante, A.}
\newblock Symmetry of local minimizers for the three-dimensional
  {G}inzburg-{L}andau functional.
\newblock {\em J. Eur. Math. Soc. (JEMS) 12}, 5 (2010), 1069--1096.

\bibitem{Mironescu-radial}
{\sc Mironescu, P.}
\newblock On the stability of radial solutions of the {G}inzburg-{L}andau
  equation.
\newblock {\em J. Funct. Anal. 130}, 2 (1995), 334--344.

\bibitem{Mironescu_symmetry}
{\sc Mironescu, P.}
\newblock Les minimiseurs locaux pour l'\'equation de {G}inzburg-{L}andau sont
  \`a sym\'etrie radiale.
\newblock {\em C. R. Acad. Sci. Paris S\'er. I Math. 323}, 6 (1996), 593--598.

\bibitem{Nadirashvili95}
{\sc Nadirashvili, N.~S.}
\newblock Rayleigh's conjecture on the principal frequency of the clamped
  plate.
\newblock {\em Archive for rational mechanics and analysis 129}, 1 (1995),
  1--10.

\bibitem{OvchinSigal95}
{\sc Ovchinnikov, Y.~N., and Sigal, I.~M.}
\newblock Ginzburg-{L}andau equation. {I}. {S}tatic vortices.
\newblock In {\em Partial differential equations and their applications
  ({T}oronto, {ON}, 1995)}, vol.~12 of {\em CRM Proc. Lecture Notes}. Amer.
  Math. Soc., Providence, RI, 1997, pp.~199--220.

\bibitem{Pacard_Riviere}
{\sc Pacard, F., and Rivi\`ere, T.}
\newblock {\em Linear and nonlinear aspects of vortices}, vol.~39 of {\em
  Progress in Nonlinear Differential Equations and their Applications}.
\newblock Birkh\"auser Boston, Inc., Boston, MA, 2000.
\newblock The Ginzburg-Landau model.

\bibitem{Pisante11-JFA}
{\sc Pisante, A.}
\newblock Two results on the equivariant {G}inzburg-{L}andau vortex in
  arbitrary dimension.
\newblock {\em J. Funct. Anal. 260}, 3 (2011), 892--905.

\bibitem{SandierShafrir94-CVPDE}
{\sc Sandier, E., and Shafrir, I.}
\newblock On the uniqueness of minimizing harmonic maps to a closed hemisphere.
\newblock {\em Calc. Var. Partial Differential Equations 2}, 1 (1994),
  113--122.

\bibitem{Talenti81}
{\sc Talenti, G.}
\newblock On the first eigenvalue of the clamped plate.
\newblock {\em Annali di Matematica Pura ed Applicata 129\/} (1981), 265--280.

\bibitem{TertikasZog07-AdvM}
{\sc Tertikas, A., and Zographopoulos, N.~B.}
\newblock Best constants in the {H}ardy-{R}ellich inequalities and related
  improvements.
\newblock {\em Adv. Math. 209}, 2 (2007), 407--459.

\bibitem{Treves75-Book}
{\sc Tr\`eves, F.}
\newblock {\em Basic linear partial differential equations}.
\newblock Pure and Applied Mathematics, Vol. 62. Academic Press [Harcourt Brace
  Jovanovich, Publishers], New York-London, 1975.

\end{thebibliography}
\end{document}